\newcommand{\R}{\mathbb{R}}
\newcommand{\C}{\mathbb{C}}
\newcommand{\N}{\mathbb{N}}
\newcommand{\Z}{\mathbb{Z}}
\newcommand{\Q}{\mathbb{Q}}
\newcommand{\T}{\mathbb{T}}
\newcommand{\SL}{{\rm SL}}
\newcommand{\GL}{{\rm GL}}
\newcommand{\gl}{{\rm Mat}}
\newcommand{\Mat}{{\rm Mat}}
\newcommand{\diag}{\mbox{diag}}
\newcommand{\DC}{{\rm DC}}
\newcommand{\strip}{\mathscr{A}}
\newcommand{\sabs}[1]{\left| #1 \right|} 
\newcommand{\abs}[1]{\bigl| #1 \bigr|} 
\newcommand{\babs}[1]{\Bigl| #1 \Bigr|} 
\newcommand{\norm}[1]{\lVert#1\rVert} 
\newcommand{\normr}[1]{\lVert#1\rVert_r} 
\newcommand{\bnorm}[1]{\Bigl\| #1\Bigr\|} 
\newcommand{\normtwo}[1]{
{\left\vert\kern-0.25ex\left\vert\kern-0.25ex\left\vert #1 
    \right\vert\kern-0.25ex\right\vert\kern-0.25ex\right\vert} }
\newcommand{\avg}[1]{\left< #1 \right>} 
\newcommand{\transl}{{T}} 
\newcommand{\less}{\lesssim}
\newcommand{\more}{\gtrsim}
\newcommand{\ep}{\epsilon} 
 \newcommand{\ka}{\kappa} 
\newcommand{\la}{\lambda}
\newcommand{\ga}{\gamma}
\newcommand{\om}{\omega}
\newcommand{\vpsi}{\vec{\psi}}
\newcommand{\analyticf}[1]{C^{\om}_{r} (\T^{#1}, \R)}
\newcommand{\LE}[1]{L^{({#1})}}  
\newcommand{\An}[1]{A^{({#1})}}  
\newcommand{\Bn}[1]{B^{({#1})}}  
\newcommand{\un}[2]{u^{({#1})}_{{#2}}}   
\newcommand{\comp}{^{\complement}}
\newcommand{\Gr}{{\rm Gr}}
\newcommand{\FF}{\mathscr{F}}
\newcommand{\B}{\mathscr{B}}
\newcommand{\submatr}[2]{\left\{{#1}\right\}_{{#2}}}
\newcommand\restr[2]{{
  \left.\kern-\nulldelimiterspace 
  #1 
  \vphantom{\big|} 
  \right|_{#2} 
  }}
\theoremstyle{plain}
\newtheorem{theorem}{Theorem}[section]
\newtheorem{proposition}{Proposition}[section]
\newtheorem{corollary}[proposition]{Corollary}
\newtheorem{lemma}[proposition]{Lemma}
\numberwithin{equation}{section}
\theoremstyle{remark}
\newtheorem{remark}{Remark}[section]
\theoremstyle{definition}
\newtheorem{definition}{Definition}[section]
\newcommand{\nzerobar}{\underline{n_0}}
\title[Continuity, positivity, simplicity of Lyapunov exponents]{Continuity, positivity and simplicity \\of the Lyapunov exponents for quasi-periodic cocycles}
\date{}
\begin{document}

\author[P. Duarte]{Pedro Duarte}
\address{Departamento de Matem\'atica and CMAFCIO\\
Faculdade de Ci\^encias\\
Universidade de Lisboa\\
Portugal 
}
\email{pmduarte@fc.ul.pt}

\author[S. Klein]{Silvius Klein}
\address{Departamento de Matem\'atica, Pontif\'icia Universidade Cat\'olica do Rio de Janeiro, Brazil (PUC-Rio)
  }
\email{silviusk@mat.puc-rio.br}

\begin{abstract} An analytic quasi-periodic cocycle is a linear cocycle over a fixed ergodic torus translation of one or several variables, where the fiber action depends analytically on the base point. Consider the space of all such cocycles of any given dimension and endow it with the uniform norm. Assume that the  translation  vector satisfies a generic Diophantine condition. 
We prove large deviation type estimates for the iterates of such cocycles, which, moreover, are stable under small perturbations of the cocycle. 
As a consequence of these uniform estimates, we establish  continuity properties of the Lyapunov exponents regarded as functions on this space of cocycles. 
This result builds upon our previous work on this topic and its proof uses an abstract continuity theorem of the Lyapunov exponents which we derived in a recent monograph. The new feature of this paper is extending the availability of such results to cocycles that are {\em identically singular} (i.e. non-invertible anywhere), in the several variables torus translation setting. This feature is exactly what allows us, through a simple limiting argument, to obtain criteria for the positivity and simplicity of the Lyapunov exponents of such cocycles. Specializing to the family of cocycles corresponding to a block Jacobi operator, we derive consequences on the continuity, positivity and simplicity of its Lyapunov exponents, and on the continuity of its integrated density of states.  

\end{abstract}

\maketitle




\section{Introduction and statements}\label{introduction}
\newcommand{\qpcmat}[1]{C^{\om}_{r} (\T^{#1}, \Mat_m (\R))}
\newcommand{\qpcmatk}[1]{C^{\om}_{r} (\T^{#1}, \Mat_k (\R))}
\newcommand{\qpcmatkm}[1]{C^{\om}_{r} (\T^{#1}, \Mat_ {k \times m}  (\R))}
\newcommand{\qpcmatl}[1]{C^{\om}_{r} (\T^{#1}, \Mat_l (\R))}

\subsection*{Definitions, notations, framework}\label{definitions}


In ergodic theory,  a {\em linear cocycle}
is a dynamical system on a vector bundle, which preserves the linear bundle structure and induces a measure preserving dynamical system on the base.  The vector bundle is usually assumed to be
 trivial and the base dynamics to be  an ergodic
measure preserving transformation $T\colon X \to X$ on some probability space $(X,\FF,\mu)$.
Given a measurable function $A\colon X\to \Mat_m(\R)$,
the map $F\colon X\times\R^m \to  X\times\R^m$ defined by
$F(x,v)=(T x, A(x) v)$   is a linear cocycle over  $T$. The i\-te\-ra\-ted maps $F^n$ are given by $F^n (x,v)=(T^n x, A^{(n)}(x) v)$, where for all $x\in X$
and $n\geq 1$,
 $ \An{n}(x) := A(T^{n-1} x) \ldots A(T x) A(x)$.
 
 When the base map $T$ is fixed we  refer to 
the matrix valued function $A\colon X\to \Mat_m(\R)$ as being the 
linear cocycle. 

The repeated {\em Lyapunov exponents} (LE) of the cocycle $A$  are
denoted by $L_1(A)\geq L_2(A)\geq \ldots \geq L_m(A) \ge - \infty$.
By Kingman's ergodic theorem, they are the pointwise $\mu$-almost everywhere and average limits
$$ L_k(A)=\lim _{n\to+\infty} \frac{1}{n} \log s_k(\An{n}(x) ) = \lim _{n\to+\infty}  \int_X \frac{1}{n} \log s_k(\An{n}(x) )  d \mu (x) , $$
 where $s_k(g)$ stands for the $k$-th singular value of a matrix $g\in\Mat_m (\R)$.

In particular, $L_1(A)$, the top Lyapunov exponent   of $A$, is the limit as $n\to+\infty$ of the {\em finite scale top Lyapunov exponents}
 $$ \LE{n}_1 (A) := \int_X \frac{1}{n} \log \norm{\An{n}(x) } d \mu (x) . $$

We say that a LE is {\em simple} when its multiplicity is one, that is, when it is distinct from all the other LE. When all LE of a cocycle $A$ are simple, we say that $A$ has {\em simple Lyapunov spectrum}.

\smallskip

 A {\em quasi-periodic cocycle} is a linear cocycle over some ergodic torus translation on a finite dimensional torus 
 $\T^d=(\R/\Z)^d$ equipped with the Haar measure (which we denote by $\sabs{ \, \cdot \, }$).
 
In this paper we study {\em analytic} quasi-periodic cocycles, that is, cocycles
in the Banach space $\qpcmat{d}$
 of  all analytic functions $A\colon \T^d\to\Mat_m(\R)$ having a holomorphic, continuous up to the boundary extension to $\strip_r^d = \strip_r \times \ldots \times \strip_r \subset \C^d$, where  we denote by $\strip_r := \{ z \in \C \colon 1-r < \sabs{z} < 1 + r \}$  the annulus of width $2 r$ around the torus $\T$.  
 We endow this space with the uniform norm $\norm{A}_r := \sup_{z\in\strip_r^d} \norm{A (z)}$.

\medskip

The main result of this paper is a {\em uniform large deviation type estimate} for the iterates of any analytic quasi-periodic cocycle with simple top Lyapunov exponent. We refer to such a result as a {\em uniform fiber LDT} estimate. Our method requires a generic arithmetic assumption on the translation vector.

A fiber LDT estimate for a cocycle $A$ over some base dynamics $(X, \mu, T)$ has the form
\begin{equation}\label{intro-ldt-eq}
\mu \,   \big\{  x \in X \colon \abs{ \frac{1}{n} \log \norm{\An{n} (x)}   - \LE{n}_1 (A) }  > \epsilon  \big\}  < \iota (n, \epsilon) ,
\end{equation}
where $\ep > 0$ is small and $\iota (n, \epsilon) \to 0$ fast as $n \to \infty$.

We call  such an estimate {\em uniform} when the rate function $\iota$ is stable under small perturbations of the cocycle $A$. 

\smallskip


Establishing statistical properties like large deviation estimates is a difficult problem for most dynamical systems. 
The first results of this kind for {\em quasi-periodic} base dynamics were obtained by J. Bourgain and M. Goldstein~\cite{B-G-first} and by M. Goldstein and W. Schlag~\cite{GS-Holder} in the context of a one-parameter family of $\SL_2 (\R)$-valued cocycles corresponding to a lattice Schr\"odinger operator.

The cocycles considered here are $\Mat_m (\R)$-valued. We distinguish between {\em identically singular} (i.e. non-invertible anywhere) and non-identically singular cocycles. We studied the latter in Chapter 6 of our monograph~\cite{DK-book}. This paper is concerned with the former, which presents significant technical challenges, especially in the several variables case $d>1$.

The idea is to first prove a {\em non-uniform} fiber LDT for any identically singular cocycle, through reduction to a maximal rank (hence non-identically singular) cocycle of a lesser dimension. The uniform LDT estimate will then be derived by induction on the number of iterates, with the base step provided by the non-uniform statement. 

This result is interesting in itself and for its subsequent applications; moreover, the method developed here to derive it might also prove useful in other contexts.

\vspace{.5em}

The next result of this paper concerns the continuity of the Lyapunov exponents at {\em any}  cocycle in $\qpcmat{d}$. 
The result is quantitative, in that it also provides an explicit (weak-H\"older) modulus of continuity locally near cocycles with simple Lyapunov exponents.
These statements follow from the  abstract continuity theorem (ACT) obtained in Chapter 3 of our monograph~\cite{DK-book},  which is applicable in this context once the {\em uniform} fiber LDT is proven.

 \vspace{.5em}

The continuity theorem is then used to derive criteria for the positivity and the simplicity of the Lyapunov exponents for quasi-periodic cocycles. This in turn leads to optimal lower bounds on  Lyapunov exponents and on the gaps between consecutive Lyapunov exponents associated with discrete, quasi-periodic band lattice Schr\"odinger operators (also referred to as block Jacobi operators).

We note that (in addition to its intrinsic interest)  treating the case of identically singular cocycles is exactly what allows us to derive the aforementioned consequences on the positivity and simplicity of the LE. More precisely, for cocycles with a certain structure, using a simple limiting argument, we obtain asymptotic formulas for the LE, which, under appropriate assumptions, imply lower bounds on LE or on the gaps between consecutive LE. A simple illustration of this, regarding the  LE associated with discrete Schr\"odinger operators, is given in Proposition~\ref{sorets-spencer-thm}.

\vspace{.5em}


Our method applies equally to cocycles on the $1$-variable torus $\T$ and on a several variables torus $\T^d$ with $d>1$. There are currently more and sharper results available for the former model. 
Let us now emphasize the differences between the analysis of the case $d=1$ and that of the case $d>1$ and explain how most  of the one-variable arguments cannot be applied in the several variables setting.

 
 
In~\cite{Sorets-Spencer} E. Sorets and T. Spencer considered the quasi-periodic Schr\"o\-din\-ger cocycle 
\begin{equation}\label{intro-s-cocycle}
A_{\lambda, E}(x)=\begin{bmatrix} \lambda\, f (x) - E & -1 \\ 1 & \hphantom{-}0 \end{bmatrix} ,
\end{equation}
where $f$ is a (fixed) real analytic and non-constant function on the $1$-variable  torus $\T$.

They proved that the top LE of this cocycle is bounded from below by $\frac{1}{2} \log \sabs{\la}$ for all $E\in\R$ (and also for all translations $\om$) provided that $\sabs{\la} \ge \la_0$, where $\la_0$ depends only on $f$. 
It is important in applications to spectral theory problems for the corresponding lattice Schr\"odinger operator that the lower bound on the LE and the threshold $\la_0$ be uniform in the energy parameter $E$. 

The idea of the proof is to complexify the analytic function $f (x)$ to a neighborhood of $\T$, and then to use the fact that the equation $f (z) - s = 0$ has {\em finitely} many zeros in a compact set, for any given $s \in \R$. This is due precisely to the fact that $f$ is a {\em one variable} holomorphic, non-constant function. 

That neighborhood of the torus contains an annulus $\strip_r$, which we identify with a strip $[0, 1] \times [-r, r]$. Then for most $y \in [-r, r]$, i.e. for most horizontal lines,  the equation $f (x+i y) -   \frac{E}{\la} = 0$ has no solutions in $x$. Writing for such $y$
$$ 
A_{\lambda, E}(x+i y)=\begin{bmatrix} \lambda\, f (x+i y) - E & -1 \\ 1 & \hphantom{-}0 \end{bmatrix} = \begin{bmatrix} \la \,( f (x + i y) - \frac{E}{\la}) & -1 \\ 1 & \hphantom{-}0 \end{bmatrix} ,
$$
it is then clear that the cocycle $A_{\la, E} (\cdot + i y)$ is {\em uniformly hyperbolic}, provided we choose $\la$ large enough. In particular this ensures a lower bound on the top LE of the cocycles $A_{\la, E} (\cdot + i y)$, for most $y \in [-r, r]$.

The challenge is then to transfer such a  lower bound to $A_{\la, E} (x)$, that is, to $y=0$. E. Sorets and T. Spencer accomplish this by proving an extension of Jensen's formula to meromorphic functions; an alternative argument given by J. Bourgain (see~\cite{B}) uses harmonic measure; in~\cite{DK1} we gave a simple argument,  based on Hardy's convexity theorem (see~\cite{Duren}), that applies to general higher dimensional analytic cocycles on $\T$. In all of these arguments, the crucial ingredient is the fact that the function $\un{n}{A} (x) := \frac{1}{n} \log \norm{\An{n}(x) }$ has a {\em subharmonic} extension to $\strip_r$, which is due to $A (z)$ being a holomorphic function on $\strip_r$.

\smallskip

To summarize, positivity of the top LE for analytic quasi-periodic cocycles on  $\T$ for large $\la$ (which is a necessary condition) follows from the following key ingredients: the set $\{ z \colon f (z) - s = 0 \}$ has ``low algebraic complexity'', hence most orbits of the translation will avoid it; and, essentially, the convexity of the map $y \mapsto L_1 (  A_{\la, E} (\cdot + i y) )$. We emphasize that the former is not available in the several variables ($d>1$) torus translation setting. 

We note the fact that the lower bounds on the top LE obtained in~\cite{Sorets-Spencer, B, DK1} in the one-frequency ($d=1$) setting are {\em uniform} in the frequency. Furthermore, Z. Zhang~\cite{Zhang-positivity} obtained a sharp (and uniform) lower bound for the top LE of the Schr\"odinger cocycle \eqref{intro-s-cocycle}, while very recently, R. Han and C. Marx~\cite{Han-Marx} obtained a precise (and uniform) asymptotic formula for the top LE of the same cocycles.  When $d>1$,  the only lower bound available which is uniform in the frequency   
is due to J. Bourgain~\cite{B-d}, and it applies to the several variables analogue of the Schr\"odinger cocycle \eqref{intro-s-cocycle}.

\smallskip

Let us now discuss the continuity of the LE for quasi-periodic cocycles on $\T$. J. Bourgain and S. Jitormiskaya~\cite{B-J}  proved joint continuity in $(E, \om) \in \R \times (\R\setminus\Q)$ of the LE of the Schr\"odinger cocycle~\eqref{intro-s-cocycle} (this particular result was in fact obtained by J. Bourgain in~\cite{B-d} for $\T^d$ with $d>1$ as well). S. Jitomirskaya and C. Marx proved joint continuity in cocycle and frequency for $\Mat_2 (\C)$-valued cocycles. 
More recently, A. \'Avila, S. Jitomirskaya and C. Sadel~\cite{AJS}
extended this result to cocycles of arbitrary dimension. More precisely, they proved 
continuity of all LE in  $(A,\omega)\in C^\omega_r (\T,\Mat_m(\C))\times (\R\setminus \Q)$.
We stress that these continuity results are not quantitative,
unlike the ones addressed in this work.

The proof of the result in \cite{AJS} uses the fact that given an analytic function
$A:\T\to \Mat_m(\C)$, having a holomorphic extension to some strip of width $r>0$, the complex cocycles
$ A_y:\T \to \Mat_m(\C)$, $A_y(x):=A(x+i y)$ 
have dominated splitting for most $y\in (-r,r)$.
The continuity of the Lyapunov exponents for cocycles
having dominated splitting comes with a soft argument 
from the  theory of hyperbolic dynamical systems,
because {\em dominated splitting} is a kind of
projective hyperbolicity.
Finally, the continuity of the LE for $A=A_{y=0}$ follows using 
the convexity of the function $y\mapsto L_1(A_{y})$.

\smallskip

To summarize, continuity of the LE for (general, higher dimensional) analytic quasi-periodic cocycles on  $\T$ ($d=1$) follows from the following key ingredients: the dominated splitting of certain perturbations of the cocycle and a convexity argument.

\smallskip

In contrast with this,
we show in a separate paper 
that for $d>1$ there are  homotopy classes
of analytic functions  $A\colon \T^d\to \GL_m(\C)$
whose corresponding quasi-periodic cocycles do {\em not} have dominated splitting. Therefore, even if the convexity argument had a counterpart when $d>1$, the approach in~\cite{AJS} could not be used to establish the continuity of the LE in the several variables torus translation setting.

\smallskip

When $d=1$, the dominated splitting property of the cocycle $A_y$ for most $y$ relies essentially on the fact that the zeros of a one variable holomorphic function are isolated points.

This property does not hold for several variables analytic functions
$f \colon \T^d \to \C$, 
$d>1$. 
A simple dimension argument shows that generically, the zero set 
$Z(f, y):=\{ x\in\T^d \colon   f(x+i y)=0 \}$,
 has  dimension  $d-2\geq 0$. Hence, if $d>1$, there are
 open sets of analytic functions $f \colon \T^d \to \C$ having holomorphic extensions to a strip of width $r>0$  which have zeros on every torus  $\T^d_y:=\{ x+i y\colon x\in\T^d\}$
 with $y\in (-r,r)^d$. 
 Thus, considering the cocycle
$ 
	A (x):=\begin{bmatrix}  f(x) & -1 \\ 1 & 0 \end{bmatrix}
$, 
even if most values of $f$ are very large,
the cocycles $A_y(x):=A(x+i y)$ are not expected  to be uniformly hyperbolic. This amounts to saying that the cocycles $A_y$ will not have dominated splitting. 

Therefore, the  zeros of the function $f$ present unavoidable technical difficulties when we address the problem of  continuity of the LE of quasi-periodic cocycles  over a several variables  torus translation.

\smallskip

A more robust approach, that works for both $d=1$ and $d>1$, was introduced by M. Goldstein and W. Schlag~\cite{GS-Holder} in the context of Schr\"odinger cocycles $A_{\la, E}$ like~\eqref{intro-s-cocycle}. This approach proceeds by establishing  LDT estimates for the iterates of the cocycle and it uses in an essential way a deterministic result on the norm growth of long products of $\SL_2 (\R)$ matrices, which the authors call the Avalanche Principle (AP). While this method requires that the translation $\om$ be {\em fixed} and satisfy a generic arithmetic condition, the result provides a modulus of continuity\footnote{According to a private conversation of the second author with Qi Zhou, regarding the latter's yet unpublished (joint) work, an arithmetic condition is in fact {\em necessary} for establishing such a modulus of continuity.} (i.e. H\"older when $d=1$, weak-H\"older when $d>1$) for the top Lyapunov exponent regarded as a function of the energy parameter $E$. 

In our recent monograph~\cite{DK-book}, we extend this approach   in both depth and breadth, making it applicable to any space of cocycles, of any dimension and  over any base dynamics, provided appropriate LDT estimates are available in the given setting. We refer the reader to Chapters 1 and 6 of this monograph, as well as to our survey~\cite{DK-survey} for a more thorough review of related results. Moreover, we note that the recent surveys of S. Jitomirskaya and C. Marx~\cite{JM-survey} and D. Damanik~\cite{David-survey} provide the interested reader with an excellent overview of related topics.

\smallskip

Finally, we comment on the simplicity of the Lyapunov exponents. In the 1980s, Y. Guivarc'h and A. Raugi~\cite{Guivarch-Raugi-sim} and I. Ya. Gol'dsheid and G. A. Margulis~\cite{Gol-Margulis-sim} obtained sufficient criteria  for the simplicity of the LE of locally constant cocycles over a Bernoulli shift. More recently, results on this kind for other models were obtained by C. Bonatti and M. Viana~\cite{BV-sim}  and by A. \'Avila and M. Viana~\cite{AV-sim1, AV-sim2}. 

We are not aware of any previously established sufficient criteria for the simplicity of the LE for  {\em quasi-periodic} models. However, in the $1$-variable torus translation case, the general continuity result in~\cite{AJS} can be used to obtain a similar (but slightly less precise) criterion for simplicity to the one we formulate and prove here.

\smallskip

As mentioned earlier, the key to all the results in this paper is proving a {\em uniform} fiber LDT estimate like~\eqref{intro-ldt-eq}. The uniformity of this estimate  in the cocycle represents the crucial assumption in the proof of the ACT. This is the main reason for the case of {\em identically singular} cocycles  being significantly more challenging, especially when con\-si\-de\-ring translations on the several variables ($d>1$) torus. We give some details on these challenges and on how they will be overcome.

One difficulty is related  to the fact that being identically singular (and hence not having full rank) is {\em not} an open condition in the space of analytic cocycles. Thus a small perturbation of such a cocycle could have a higher rank. 

Given an identically singular cocycle $A$ with $L_1 (A) > - \infty$, we show that it is semi-conjugated to a maximal rank (hence non-identically singular) cocycle of a smaller dimension; we previously established (see chapter 6 in \cite{DK-book}) uniform fiber LDT estimates for non-identically singular cocycles; via the semi-conjugacy relation, this leads to fiber LDT for the cocycle $A$. Incidentally we also derive the fact (which was recently independently proven in~\cite{Sadel-Xu}) that an analytic cocycle 
is nilpotent if and only if its top Lyapunov exponent is $- \infty$.

We note that  since the rank of $A$ may change under perturbations, the parameters of the LDT obtained through semi-conjugacy may blow up. 
Thus this argument will only provide {\em non-uniform} fiber LDT estimates for analytic cocycles. 
We prove a {\em uniform} fiber LDT estimate in the vicinity of an identically singular cocycle $A$ using an inductive procedure based on the avalanche principle. We explain below the mechanics of the proof.

The non-uniform LDT is used to get the procedure started\textemdash given a large enough initial scale (i.e. number of iterates) $n_0$, the fiber LDT estimate holds for $A$ at this scale, hence if we choose any other cocycle $B$ at a small enough distance from $A$, this estimate will transfer over to $B$ by proximity. Of course, this can only be done once, at an initial scale of order $n_0$, as the size of the neighborhood of $A$ depends on $n_0$. Therefore, we obtain an estimate of the form
\begin{equation}\label{intro-ldt-eqt}
\babs{  \{  x \in \T^d \colon \abs{ \frac{1}{n} \log \norm{\Bn{n} (x)}   - \LE{n}_1 (B) } > n^{-a}  \} } < e^{- n^b} ,
\end{equation}
for some constants $a, b > 0$, for $n \asymp n_0$ and for all cocycles $B$ in a small neighborhood of $A$.

The goal is to derive an estimate like~\eqref{intro-ldt-eqt} at a next scale $n_1 \gg n_0$, then at a scale $n_2 \gg n_1$ and so on, in such a way that the parameters $a$ and $b$ in this estimate do not change from one step to the next (to be precise, they will only stabilize from scale $n_1$ on).  

For every scale $n$ and cocycle $B$ consider the function
$$\un{n}{B} (x) := \frac{1}{n} \log \norm{\Bn{n} (x)} .$$

The AP will essentially allow us to represent the function $\un{n_1}{B} (x)$ corresponding to the scale $n_1$, in terms of certain {\em Birkhoff averages} of the function $\un{m}{B} (x)$ corresponding to scales $m \asymp n_0$, for which the estimate~\eqref{intro-ldt-eqt} is already available. There is, of course, an error term in this representation; furthermore, this representation does not hold for all phases $x$, but only outside of a relatively small set of phases related to the exceptional set in the LDT estimate~\eqref{intro-ldt-eqt} at scales of order $n_0$.

By the pointwise ergodic theorem, these Birkhoff averages converge almost everywhere to the means of the corresponding observables. However, since we are performing an inductive process (or a multiscale a\-na\-ly\-sis with finite steps) we need a {\em quantitative} version of this result, one where the rate of convergence and the size of the exceptional sets of phases depend explicitly on the number of iterates.  

Such a {\em quantitative Birkhoff ergodic theorem} (qBET) will indeed hold due to: the arithmetic assumption on the translation $\om$ (this takes care of some small denominators issues); and the fact that since the cocycles $B (x)$ are real analytic (hence they have holomorphic extensions), the functions $\un{n}{B} (x)$ defined above extend to a neighborhood of $\T^d$ as {\em separately subharmonic} functions (i.e. subharmonic in each variable).

It is crucial for our purposes that the parameters that determine the qBET for the observables $\un{n}{B}$ be {\em uniform} in both $B$ (in a fixed neighborhood of $A$) and $n$.  Moreover, we also need a uniform bound on the $L^2$-norms of these observables\textemdash in part because there are sets of phases over which we have no control, hence an estimate on $\LE{n}_1 (B) = \int_{\T^d} \un{n}{B} (x) d x$ can be obtained  from having control on $\un{n}{B} (x)$ for most phases $x$ and a bound on the global $L^2$-norm of $\un{n}{B}$. These are the two most technically challenging aspects of the proof, and the challenge comes from the fact that the cocycles $B$ may be identically singular. 

The qBET for subharmonic (when $d=1$) or separately subharmonic (when $d>1$) functions $u (z)$ were proven (see for instance~\cite{B, GS-Holder, sK2}) under the assumption that $\sabs{u(z)} \le C < \infty$ throughout the domain. The parameters that determine this estimate depend only on the bound $C$ and the size of the domain, hence such a qBET applies uniformly if the observables considered are uniformly bounded. 

In the context of this paper,  the separately subharmonic functions $\un{n}{B} (z) = \frac{1}{n} \log \norm{\Bn{n} (z)}$ are clearly uniformly bounded from above. But since $\Bn{n} (z)$ may be $0$, which in the case $d>1$ may hold on an algebraically non-trivial set, these functions might not be bounded from below. 
However, a subharmonic function cannot be too small (i.e. too close to $- \infty$) for too much of the phase space, unless it were that small throughout the {\em whole} space. This is due to Cartan's estimate on logarithmic potentials, and by means of Fubini it also holds for separately subharmonic functions.   

Thus, if we can establish that the functions $\un{n}{B} (x)$ have a uniform lower bound at least somewhere, then each of these function could fall below a certain (low enough) threshold only on a small set of phases $x$. We horizontally truncate $\un{n}{B} (z)$ from below, so the resulting function is still separately subharmonic, it is bounded from above and below (hence the qBET for bounded observables applies to it) and it agrees with $\un{n}{B}$ over much of the phase space, provided the threshold for the truncation is chosen small enough. This will allow us to transfer the qBET (and other estimates) from the truncation to $\un{n}{B}$. The threshold for the truncation of $\un{n}{B}$ will be of the order $- n^a$, for some $a \in (0,1)$, hence the bound on the corresponding truncation will be of order $n^a$. This does produce errors of that order, but they can be easily absorbed. 

Finally, we note that the key fact that $\un{n}{B} (x)$ have a uniform lower bound at least somewhere, cannot be established a-priori, but it will be obtained inductively and used to feed the next step of the induction.


\subsection*{Formulation of the main statements} We consider the Banach space $\qpcmat{d}$ of linear cocycles on $\T^d$ having a holomorphic extension to $\strip_r^d$, continuous up to the boundary. This space is endowed with the uniform norm. 

Given $t \in (0, 1)$, we denote by $\DC_t$ the set of translation vectors $\omega\in\R^d$ satisfying the following {\em Diophantine condition}:\,  
$$
\norm{k \cdot \om} \ge \frac{t}{\sabs{k}^{d+1}}
\quad \text { for all }\; k\in \Z^d\setminus\{0\}\;,
$$
where  for any real number $x$ we write $\norm{x}:=\min_{k\in\Z} \abs{x-k}$.

\smallskip

The main result of this paper, from which everything else follows, is a large deviation estimate on the iterates of a cocycle in this space. 

\begin{theorem}\label{intro-ufiber-ldt-thm}
Given  $A \in \qpcmat{d}$ with $L_1 (A) > L_2 (A)$ and $\om \in \rm{DC}_t$, there are constants $\delta = \delta (A) > 0$, $n_1 = n_1 (A, t) \in \N$, $a_1 = a_1 (d) > 0$, $b_1 = b_1 (d)  > 0$  so that if $\normr{B-A} \le \delta$ and $n \ge n_1$, then 
\begin{equation*} 
\abs{  \{ x \in \T^d \colon \abs{ \frac{1}{n} \log \norm{\Bn{n} (x)} - \LE{n}_1  (B)  } > n^{-a_1} \} } < e^{- n^{b_1}}.
\end{equation*}
\end{theorem}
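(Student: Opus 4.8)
The plan is to follow the multiscale (inductive-on-scale) scheme outlined in the introduction, bootstrapping a \emph{uniform} fiber LDT from a \emph{non-uniform} one via the Avalanche Principle together with a quantitative Birkhoff ergodic theorem for (separately) subharmonic observables. First I would dispose of the base step: since $L_1(A) > L_2(A) > -\infty$, the cocycle $A$ is in particular not nilpotent, so it is semi-conjugate to a maximal-rank cocycle $\tilde A$ of dimension $k = \operatorname{rank} A \le m$; the uniform fiber LDT for non-identically-singular cocycles established in Chapter~6 of~\cite{DK-book} applies to $\tilde A$, and transporting it through the semi-conjugacy yields a fiber LDT for $A$ itself of the form~\eqref{intro-ldt-eqt} at some initial scale $n_0 = n_0(A,t)$. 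This estimate is only guaranteed for $A$ (its parameters depend on the rank, which is unstable under perturbation), but by continuity of $B \mapsto \Bn{n_0}$ in the $\normr{\cdot}$-norm it transfers to every $B$ with $\normr{B-A} \le \delta_0 = \delta_0(A,n_0)$, at scales $n \asymp n_0$; this is the only place the non-uniform input is used.

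Next comes the inductive step, which is the heart of the argument. Fix the observables $\un{n}{B}(z) = \frac1n \log\norm{\Bn{n}(z)}$; these extend to $\strip_r^d$ as separately subharmonic functions, uniformly bounded \emph{above} by $\log\norm{B}_r \lesssim 1$. The crucial quantity to propagate is a uniform lower bound \emph{somewhere}: if at scale $m \asymp n_0$ one knows $\un{m}{B}(x_0) \ge -C_0$ for some phase $x_0$ (and all $B$ near $A$), then Cartan's estimate on logarithmic potentials (applied via Fubini in each variable in the $d>1$ case) forces $\un{m}{B}$ below the threshold $-m^{a}$ only on an exponentially small set of phases. One then horizontally truncates $\un{m}{B}$ from below at level $-m^{a}$: the truncation $\max\{\un{m}{B}, -m^a\}$ is still separately subharmonic, now bounded in absolute value by $O(m^a)$, so the qBET for bounded separately subharmonic functions (as in~\cite{B,GS-Holder,sK2}), whose parameters depend only on the bound and the domain, applies \emph{uniformly} in $B$ and $m$. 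Since the truncation agrees with $\un{m}{B}$ off an exponentially small set and differs from it by a controlled amount elsewhere, the qBET and an $L^2$-bound transfer to $\un{m}{B}$ itself, with errors of order $m^a$ that are harmless relative to the gap. Now invoke the AP (in the version for $\Mat_m(\R)$ matrices, using $L_1 > L_2$ to guarantee the hypotheses hold off a small exceptional set, by the LDT already in hand at scale $m$): it represents $\un{n_1}{B}(x)$ for $x$ outside a small set as a Birkhoff average $\frac1{K}\sum_{j} \un{m}{B}(T^{jm}x)$ of the scale-$m$ observable plus a bounded error, where $n_1 = Km \gg m$. Applying the quantitative Birkhoff estimate to this average and then integrating yields $|\un{n_1}{B}(x) - \LE{n_1}_1(B)| \le n_1^{-a_1}$ off a set of measure $< e^{-n_1^{b_1}}$, with $a_1 = a_1(d), b_1 = b_1(d)$ now \emph{stabilized} (independent of the scale, hence of the step). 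Along the way one also recovers $\un{n_1}{B}(x_0') \ge -C_0$ for a suitable phase, which feeds the lower-bound hypothesis into the next step, closing the induction. Iterating over scales $n_0 \ll n_1 \ll n_2 \ll \cdots$ gives the estimate at every $n \ge n_1$ for all $B$ with $\normr{B-A} \le \delta$, where $\delta \le \delta_0$ is whatever the induction ultimately allows.

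The main obstacle I expect is exactly the one the introduction flags: obtaining the \emph{uniform lower bound somewhere} for $\un{n}{B}$ and the uniform $L^2$-bound, simultaneously over the perturbation and over all scales, when $B$ may be identically singular and $\Bn{n}(z)$ may vanish on an algebraically non-trivial set in the $d>1$ case. This cannot be had a priori; it must be woven into the induction itself — the lower bound at scale $n_{j+1}$ is a \emph{consequence} of the AP representation and the estimates at scale $n_j$ — so the inductive hypothesis has to be stated carefully enough to carry this auxiliary information forward without the constants degrading. Managing the truncation errors of order $n^a$ (choosing $a$ small enough that they are dominated by the spectral gap $L_1 - L_2$ at the relevant scales, yet large enough that the truncated function is genuinely bounded) and verifying that the AP hypotheses survive under the $d>1$ qBET exceptional sets are the other delicate points; the $\SL_2$ arguments of~\cite{GS-Holder} do not transcribe directly because dominated splitting of the complexified cocycles is genuinely unavailable when $d>1$, which is why the whole argument is routed through LDT plus AP rather than through hyperbolicity.
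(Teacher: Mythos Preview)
Your proposal is correct and follows essentially the same route as the paper's proof in Section~\ref{ufiber-ldt}: the non-uniform LDT at an initial scale $n_0$ is transferred to nearby $B$ by proximity, and then an induction on scales proceeds via the avalanche principle together with the qBET for horizontally truncated separately subharmonic observables, with the ``lower bound somewhere'' carried forward as part of the inductive hypothesis. Two small refinements worth noting: the AP output at each step is a \emph{signed combination} of Birkhoff averages at scales $n_0$ and $2n_0$ (yielding $-\LE{n_0}_1(B)+2\LE{2n_0}_1(B)$ rather than a single average), the resulting weak deviation bound is then boosted via a splitting lemma (Lemma~\ref{splitting-ssh}), and the hypothesis is only $L_1(A)>L_2(A)$, so $L_2(A)=-\infty$ is allowed and handled through the uniform upper semicontinuity of $L_1(\wedge_2 B)$ when verifying the AP gap condition.
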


The crucial feature of the result above for our consequent continuity statements, and what makes its derivation most challenging, is the local {\em uniformity} of the estimates in the cocycle.

\smallskip

We say that a function is {\em weak-H\"older}  continuous if its modulus of continuity is given by 
$ w (h) = C \, e^{- c\,( \log (1/h) )^b}$,
for some positive constants $C$, $c$ and $b$. Note that if $b=1$  then  we have
$w (h)=C\,h^c$, which corresponds to H\"older  continuity.

We can now formulate the statement on the continuity of the Lyapunov exponents. They (and their consequences)  hold under the assumption that  $\om \in \rm{DC}_t$, that is, the translation vector is Diophantine.

\begin{theorem}\label{cont-le}
The map $\qpcmat{d} \ni A \mapsto L_k (A) \in [ - \infty, \infty)$ is continuous for all $1 \le  k \le m$. 

Moreover, if for some $A$ and $k<m$ we have $L_k (A) > L_{k+1} (A)$, then locally near $A$ the map $\qpcmat{d} \ni B \mapsto (L_1 + \ldots + L_k)  (B) \in \R$ is weak-H\"older continuous.
\end{theorem}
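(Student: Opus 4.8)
The plan is to deduce Theorem~\ref{cont-le} from the uniform fiber LDT of Theorem~\ref{intro-ufiber-ldt-thm} by invoking the abstract continuity theorem (ACT) established in Chapter~3 of the monograph~\cite{DK-book}. The ACT asserts, roughly, that if in a metric space of cocycles over a fixed base dynamics one has uniform fiber LDT estimates for a cocycle and all its iterates (with rate function decaying fast enough and stable under perturbation), then the top Lyapunov exponent is continuous at that cocycle, with an explicit weak-H\"older modulus. So the first step is to set up the exterior power formalism: for $1 \le k \le m$, passing from $A \in \qpcmat{d}$ to its $k$-th exterior power $\wedge^k A \in C^\omega_r(\T^d, \Mat_{\binom{m}{k}}(\R))$ is a continuous (in fact analytic, hence Lipschitz on bounded sets) operation, and one has the standard identity $L_1(\wedge^k A) = L_1(A) + \cdots + L_k(A)$, with $s_1(\wedge^k g) = s_1(g)\cdots s_k(g)$ at the finite-scale level as well. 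Thus $(L_1 + \cdots + L_k)(B)$ is exactly the top LE of the exterior power cocycle, and continuity/modulus statements for it reduce to the $k=1$ case applied to $\wedge^k B$.

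The second step handles the hypothesis gap: Theorem~\ref{intro-ufiber-ldt-thm} requires a simple top exponent, i.e. $L_1(A) > L_2(A)$; in the exterior power language the condition $L_k(A) > L_{k+1}(A)$ becomes precisely $L_1(\wedge^k A) > L_2(\wedge^k A)$. Under this assumption, apply Theorem~\ref{intro-ufiber-ldt-thm} to $\wedge^k A$: one obtains $\delta > 0$, $n_1$, and parameters $a_1, b_1$ so that the uniform LDT~\eqref{intro-ldt-eqt}-type estimate holds for all $\wedge^k B$ with $B$ in a $\delta'$-neighborhood of $A$ (using $\normr{\wedge^k B - \wedge^k A} \lesssim \normr{B-A}$), and at every scale $n \ge n_1$. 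Then one must verify the bookkeeping hypotheses of the ACT — that the estimate holds not just for the cocycle but along the sequence of scales needed, that the rate $e^{-n^{b_1}}$ is summable/fast enough, and that $\LE{n}_1(\wedge^k B)$ serves as the centering sequence; these are exactly the inputs the ACT of~\cite{DK-book} is designed to consume. The ACT then outputs that $B \mapsto L_1(\wedge^k B) = (L_1+\cdots+L_k)(B)$ is weak-H\"older on the neighborhood of $A$, giving the ``moreover'' clause.

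The third step is the continuity of each individual $L_k$ at an \emph{arbitrary} cocycle $A$, with no simplicity assumption, and here one cannot invoke the quantitative ACT directly. The approach is: first establish continuity of every partial sum $\Lambda_k(A) := L_1(A) + \cdots + L_k(A)$, $1 \le k \le m$, as a function into $[-\infty, \infty)$, and then recover $L_k = \Lambda_k - \Lambda_{k-1}$. Upper semicontinuity of each $\Lambda_k$ is the soft, standard direction: $\Lambda_k(A) = \inf_n \int_{\T^d} \frac1n \log s_1(\wedge^k A^{(n)}(x))\, dx$ is an infimum of functions that are continuous in $A$ (uniformly, on bounded sets), hence $\Lambda_k$ is upper semicontinuous. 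Lower semicontinuity is the substantive point: at cocycles where $L_k(A) > L_{k+1}(A)$ it follows from the weak-H\"older statement just proved (applied to $\wedge^k$); the remaining cocycles, where some gaps close, are handled by the limiting/continuity machinery of~\cite{DK-book} — one perturbs to split the degenerate spectrum, uses continuity of the sums that \emph{are} separated, and passes to the limit, exploiting that $\Lambda_m(A)$ (the sum of \emph{all} exponents) is controlled by $\det A^{(n)}$ and is manifestly continuous, to pin down the total mass and prevent escape. Combining upper semicontinuity everywhere with lower semicontinuity everywhere yields continuity of each $\Lambda_k$, hence of each $L_k$, on all of $\qpcmat{d}$.

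The main obstacle I expect is the lower semicontinuity of the partial sums at cocycles with non-simple spectrum (including, critically, identically singular cocycles where several bottom exponents may equal $-\infty$): there the quantitative ACT is unavailable, and one must argue by a careful perturbation-and-limit scheme that the partial LE sums cannot jump down. The delicate part is ensuring uniformity of the intermediate LDT estimates along the perturbation (which is exactly why Theorem~\ref{intro-ufiber-ldt-thm} was proved with \emph{local uniformity in the cocycle}, including near identically singular $A$), and ruling out the loss of ``exponent mass'' in the limit by pairing each partial sum with the sum of all exponents. Everything else — the exterior power identities, the reduction of the gap condition, the application of the ACT, and upper semicontinuity — is routine once the uniform fiber LDT is in hand.
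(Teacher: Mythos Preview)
Your approach is essentially the same as the paper's: both derive Theorem~\ref{cont-le} by feeding the uniform fiber LDT of Theorem~\ref{intro-ufiber-ldt-thm} (together with the uniform $L^2$-boundedness established in Theorem~\ref{u-fiber-ldt-thm}, which you should list explicitly as an input) into the abstract continuity theorem of Chapter~3 in~\cite{DK-book}, with the exterior-power reduction handling the partial sums. One small correction to your sketch of the degenerate-spectrum case: the ACT does not proceed by ``perturbing to split the spectrum'' but rather by working with the block structure of the Lyapunov spectrum of $A$ itself---using continuity of $\Lambda_i$ and $\Lambda_j$ at the nearest genuine gap points $i<k\le j$ and the ordering $L_{i+1}\ge\cdots\ge L_j$ to sandwich $\Lambda_k$---but since you ultimately defer to~\cite{DK-book} for this step, the overall argument is sound.
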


\medskip

Similarly to our results for non-identically singular cocycles, the Oseledets filtration and decomposition also depend continuously on the cocycle, and in fact they are weak-H\"older continuous too. Phrasing the most general version of this result is a bit technical, as one would firstly have to carefully describe the topology considered on the space of flag-valued functions (to which the Oseledets filtration of a cocycle belongs). To avoid these technicalities, here we formulate just a particular version of this result. 

Let $A \in \qpcmat{d}$  and let $1\le k < m$. Assume that $L_k(A)>L_{k+1}(A)$. Denote by $ \Gr_{k}(\R^m)$ the Grassmann manifold of $k$-dimensional subspaces of $\R^m$.

We define $E^-_k (A) \colon \T^d \to  \Gr_{m-k}(\R^m)$ 
to be the measurable component of the Oseledets filtration of $A$ corresponding to  the Lyapunov exponents $\leq L_{k+1}(A)$,
and $E^+_k (A) \colon \T^d \to  \Gr_{k}(\R^m)$ 
to be the direct sum of the components of the Oseledets decomposition corresponding to Lyapunov exponents $\geq L_k(A)$.
With these notations we have the following statement.

\begin{theorem}\label{cont-oseledets} 
Let $A \in \qpcmat{d}$, $1\le k < m$ with $L_k(A)>L_{k+1}(A)$. There are $\delta = \delta (A) > 0$ and 
$\alpha=\alpha(A)>0$ such that 
for all  $B_1, B_2\in \qpcmat{d}$ with $\norm{B_i-A}_r \le \delta$, $i=1,2$ we have
$$\Big| \{ x \in \T^d \colon d ( E^\pm_k (B_1) (x), \, E^\pm_k (B_2) (x) ) >  \norm{B_1-B_2}_r^\alpha \ \}  \Big| < w (\norm{B_1-B_2}_r ) ,$$
where $d$ refers to the distance on the Grassmann manifold and $w$ refers to a weak-H\"older modulus of continuity function. 
\end{theorem}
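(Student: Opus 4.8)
The plan is to reduce the $k$-dimensional assertion to the continuity of a single (top) Oseledets direction of an auxiliary analytic cocycle with \emph{simple top} Lyapunov exponent, and then to feed Theorem~\ref{intro-ufiber-ldt-thm} into the avalanche-principle multiscale argument underlying the abstract continuity theorem of~\cite{DK-book}. Concretely, I would pass to exterior powers: the most expanding direction of $(\wedge^k B)^{(n)}(x)=\wedge^k(\Bn{n}(x))\in\Pp(\wedge^k\R^m)$ is the decomposable $k$-vector spanned by the top $k$ right singular vectors of $\Bn{n}(x)$, and the span of these vectors converges, as $n\to\infty$, to the orthogonal complement of $E^-_k(B)(x)$; thus $E^-_k$ is controlled by the top Oseledets direction of $\wedge^k B$. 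For $E^+_k$ I would additionally use the adjoint cocycle over $T^{-1}$, namely $\tilde B(y):=B(y-\om)^{\top}$, which again lives in a space $C^\om_r(\T^d,\Mat_\bullet(\R))$ with $\normr{\tilde B}=\normr{B}$ and $\normr{\tilde B_1-\tilde B_2}=\normr{B_1-B_2}$, while $-\om\in\DC_t$ because $\DC_t$ is symmetric; combined with the duality $E^+_k(B)(x)=E^-_k(\tilde B)(x)^{\perp}$, this identifies $E^+_k(B)(x)$ — via the isometry $V\mapsto V^{\perp}$ of Grassmannians — with the top Oseledets direction of $\wedge^k\tilde B$. So it suffices to prove the asserted weak-H\"older estimate for the map $C\mapsto E^+_1(C)(\cdot)$, where $C\in C^\om_r(\T^d,\Mat_{\binom mk}(\R))$ satisfies $L_1(C)>L_2(C)$ and $\om\in\DC_t$; Theorem~\ref{intro-ufiber-ldt-thm} holds verbatim in this slightly larger setting.

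Before invoking the machinery I would settle two uniformity points. First, the gap hypothesis is stable near $A$: by Theorem~\ref{cont-le} the partial sum $L_1+\cdots+L_k$ is continuous at $A$, while $L_1+\cdots+L_{k-1}$ and $L_1+\cdots+L_{k+1}$ are upper semicontinuous everywhere (being infima of the continuous finite-scale quantities $\frac1n\int_{\T^d}\log\norm{\wedge^{k-1}\Bn{n}}$ and $\frac1n\int_{\T^d}\log\norm{\wedge^{k+1}\Bn{n}}$), whence $L_k(B)-L_{k+1}(B)\ge\gabar:=\tfrac12\big(L_k(A)-L_{k+1}(A)\big)>0$, and therefore $L_1(C_B)-L_2(C_B)\ge\gabar$, for $B$ in a fixed neighborhood of $A$ (the case $L_{k+1}(A)=-\infty$ only forcing the gap to $+\infty$). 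Second, Theorem~\ref{intro-ufiber-ldt-thm} applies to $C_B=\wedge^k B$ (and to $\wedge^k\tilde B$) uniformly on that neighborhood, since $B\mapsto\wedge^k B$ is Lipschitz on bounded sets. This second point is where identical singularity of $A$ is felt: $\wedge^k A$ may itself be identically singular, its iterates vanishing on an algebraically non-trivial subset of $\T^d$, so that $x\mapsto\frac1n\log\norm{(\wedge^k A)^{(n)}(x)}$ is not bounded below. Theorem~\ref{intro-ufiber-ldt-thm} is exactly what guarantees uniform estimates despite this, and — crucially — its proof supplies the horizontal-truncation device (truncate such a function below a level $-n^{a}$, which by Cartan's estimate alters it only on a set of measure $\le e^{-cn^{a}}$ and costs an $O(n^{-1+a})$ error) that I would reuse for the further auxiliary cocycles ($\wedge^2 C$, $\wedge^{k-1}B$, and so on) needed to control the relevant singular values.

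For $C$ as above and each scale $n$, let $\mathbf e_n(C)(x)\in\Pp(\wedge^k\R^m)$ denote the most expanding direction of $C^{(n)}(x)$. Combining Theorem~\ref{intro-ufiber-ldt-thm} for $C$ at scales $n$ and $2n$ with the quantitative Birkhoff theorem for the (truncated) bounded separately subharmonic function $\frac1n\log\norm{\wedge^2 C^{(n)}}$ and the convergence of its mean to $L_1(C)+L_2(C)$, I obtain a set $\B_n(C)$ with $\sabs{\B_n(C)}<e^{-n^{b_1}}$ outside of which $s_1(C^{(n)}(x))\ge e^{(L_1(C)-\ep)n}$ and $s_1(C^{(n)}(x))/s_2(C^{(n)}(x))\ge e^{(\gabar-\ep)n}$ (for a fixed small $\ep$). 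The avalanche principle applied to $C^{(2n)}(x)=C^{(n)}(T^n x)\,C^{(n)}(x)$ then yields $d\big(\mathbf e_{2n}(C)(x),\mathbf e_n(C)(x)\big)\less e^{-c n}$ for $x\notin\B_n(C)\cup T^{-n}\B_n(C)$, with $c=c(\gabar)>0$; telescoping along the doubling sequence shows $(\mathbf e_{2^j n}(C)(x))_{j\ge0}$ is Cauchy with geometric rate, with limit $E^+_1(C)(x)$, and
\begin{equation*}
d\big(E^+_1(C)(x),\mathbf e_n(C)(x)\big)\less e^{-c n}\qquad\text{for all }x\notin\textstyle\bigcup_{j\ge 0}\big(\B_{2^j n}(C)\cup T^{-2^j n}\B_{2^j n}(C)\big),
\end{equation*}
the excluded set having measure $\less e^{-n^{b_1}}$. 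All constants here are uniform for $C$ ranging over the fixed neighborhood, precisely because Theorem~\ref{intro-ufiber-ldt-thm} is.

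Finally, given $B_1,B_2$ with $\normr{B_i-A}\le\delta$, put $h:=\normr{B_1-B_2}$, let $C_i$ be the associated auxiliary cocycle ($\wedge^k B_i$ for the $E^-_k$ case, $\wedge^k\tilde B_i$ for the $E^+_k$ case), and on the complement of the union of the two excluded sets above estimate
\begin{equation*}
d\big(E^+_1(C_1)(x),E^+_1(C_2)(x)\big)\le d\big(E^+_1(C_1)(x),\mathbf e_n(C_1)(x)\big)+d\big(\mathbf e_n(C_1)(x),\mathbf e_n(C_2)(x)\big)+d\big(\mathbf e_n(C_2)(x),E^+_1(C_2)(x)\big).
\end{equation*}
The outer terms are $\less e^{-cn}$; for the middle term, the most expanding direction of a matrix is Lipschitz in that matrix with constant $\less(s_1-s_2)^{-1}$, which off the excluded set is $\less e^{-(L_1(C_i)-\ep)n}$, while $\norm{C_1^{(n)}(x)-C_2^{(n)}(x)}\less e^{Cn}h$ by telescoping $\Bn{n}_1-\Bn{n}_2$ and taking $\wedge^k$; hence the middle term is $\less e^{C'n}h$. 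Choosing $n:=\lfloor\ep_0\log(1/h)\rfloor$ with $\ep_0$ small enough that $C'\ep_0<1$ makes all three terms $\less h^{\alpha}$ with $\alpha:=\min\{c\,\ep_0,\,1-C'\ep_0\}>0$, valid off a set of measure $\less e^{-(\ep_0\log(1/h))^{b_1}}$, which is a weak-H\"older modulus $w(h)$ (after adjusting constants so the estimate is not vacuous for $h$ bounded away from $0$); this gives the claimed statement for $E^\pm_k$. The main obstacle, as indicated, is the identically singular case concentrated in the second paragraph — securing the \emph{uniform} LDT and singular-value lower bounds on a full neighborhood of $A$ when $\wedge^k A$ (or $\wedge^k\tilde A$) is identically singular, so that the naive quantitative Birkhoff theorem fails for $\frac1n\log\norm{(\wedge^k B)^{(n)}}$ — and it is overcome by the very tools built for Theorem~\ref{intro-ufiber-ldt-thm}; when $L_{k+1}(A)=-\infty$ one additionally uses the nilpotency criterion (top Lyapunov exponent $=-\infty$ iff the cocycle is nilpotent) together with the semi-conjugacy to a maximal-rank cocycle to pin down $E^-_k(A)$ itself.
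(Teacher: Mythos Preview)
Your proposal is essentially correct and, in fact, reconstructs explicitly the machinery that the paper invokes as a black box: the paper's own proof of Theorem~\ref{cont-oseledets} (in Section~\ref{proofs}) is a one-sentence citation to the abstract continuity theorem for the Oseledets filtration and decomposition in Chapter~4 of~\cite{DK-book}, observing that its hypotheses (uniform fiber LDT, uniform $L^2$-boundedness) are exactly what Theorem~\ref{u-fiber-ldt-thm} supplies. Your sketch --- reduce via $\wedge^k$ to a cocycle with simple top exponent, control the finite-scale most expanding direction through the avalanche principle, telescope along doubling scales, and compare two cocycles at a scale $n\asymp\log(1/h)$ --- is precisely the skeleton of that abstract theorem, so the two approaches coincide under the hood; yours just unpacks what the paper cites.

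Two minor slips are worth flagging, though neither is fatal. First, the limit of the most expanding direction $\mathbf e_n(C)(x)$ is $\bigl(E^-_1(C)(x)\bigr)^{\perp}$, \emph{not} $E^+_1(C)(x)$; these differ for non-normal cocycles (already for a constant upper-triangular $2\times 2$ example). This does not damage your argument, because --- as you correctly identify in your first paragraph --- what you actually need is the Pl\"ucker image of $\bigl(E^-_k(B)(x)\bigr)^{\perp}$, which is exactly $\bigl(E^-_1(\wedge^k B)(x)\bigr)^{\perp}$; only the label ``$E^+_1(C)$'' in the later display is wrong. Second, to obtain the gap ratio $s_1\bigl(C^{(n)}(x)\bigr)/s_2\bigl(C^{(n)}(x)\bigr)\ge e^{(\gabar-\ep)n}$ uniformly, the cleaner route (and the one the paper itself uses inside the proof of Theorem~\ref{u-fiber-ldt-thm}) is to bound $\frac1n\log\norm{\wedge^2 C^{(n)}(x)}$ from above via the uniform upper semicontinuity of the top Lyapunov exponent (Proposition~\ref{n-unif-usc}), which holds for \emph{all} $x$ and needs no truncation; your invocation of the qBET at that point delivers concentration around the finite-scale mean rather than the required pointwise upper bound, and is unnecessary. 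The directional conclusion you extract from the avalanche principle is not contained in Proposition~\ref{AP-practical} as stated (which only bounds norms), but it is part of the full AP in~\cite{DK-book}, so the reference is legitimate.
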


The reader may consult our monograph~\cite{DK-book} (see Chapter 6, Theorem 6.1) for the general result in the case of non-identically singular cocycles; the same will hold here as well.  

\medskip

To show the usefulness of our continuity result of the Lyapunov exponents for {\em identically singular} cocycles, we present an immediate application\textemdash the positivity of the maximal LE for Schr\"odinger cocycles. 

Recall that a discrete, one-dimensional, quasi-periodic  Schr\"odinger operator is an operator $H_\la (x)$ on $l^2 (\Z) \ni \psi = \{\psi_n\}_{n \in \Z}$, defined by
\begin{equation}\label{ s op}
[ H_\la (x) \, \psi ]_n := - (\psi_{n+1} + \psi_{n-1}) + \la \, f (x + n \om) \, \psi_n\,,
\end{equation} 
where  $\la \neq 0$ is a coupling constant, $f \colon \T^d \to \R$ is the potential function, $x \in \T^d$ is a phase parameter that introduces some randomness into the system  and $\om \in \T^d$ is a fixed incommensurable frequency.

We note that due to the ergodicity of the system, the spectral pro\-per\-ties of the family of operators $\{ H_\la (x) \colon x \in \T^d \}$ are independent of $x$ almost surely. Moreover, $H_\la (x)$ is a bounded, self-adjoint operator, whose norm (and hence spectral radius) is $\le 2 + \sabs{\la} \norm{f}_{L^\infty(\T^d)}$.

Consider the Schr\"odinger (i.e. eigenvalue) equation
\begin{equation}\label{s eq}
H_\la (x) \, \psi = E \, \psi\,,
\end{equation}
for some energy (i.e eigenvalue) $E \in \R$ and state (i.e. eigenvector) $\psi = \{\psi_n\}_{n \in \Z} \subset \R$.

Define the associated  {\em Schr\"odinger cocycle} as the one-parameter (in $E$)  fa\-mi\-ly $A_{\la, E}$, where
$$A_{\la, E} (x) := \left[ \begin{array}{ccc} 
\la \, f (x)  - E  & &  -1  \\
1 & &  \phantom{-}0 \\  \end{array} \right] \in \SL (2, \R)\,.$$

Note that the Schr\"odinger equation~\eqref{s eq} is a second order finite difference equation. An easy calculation shows that its formal solutions are given by 
\begin{align*} 
\left[\begin{array}{c}
\psi_{n+1}\\
\psi_n \\ \end{array}\right]  =  
A^{(n+1)}_{\la, E} (x)
 \cdot  \left[\begin{array}{c}
\psi_0\\
\psi_{-1} \\ \end{array}\right]\,,
\end{align*}
where $A^{(n)}_{\la, E} (x)$ are the iterates of $A_{\la, E} (x)$, for all $n \in \N$ .

\begin{proposition}\label{sorets-spencer-thm}
Let $\om \in \DC_t$, let $f \in \analyticf{d}$ be an analytic, non-constant function, 
and consider the corresponding quasi-periodic Schr\"odinger cocycle
$$A_{\la, E} (x) = \left[ \begin{array}{ccc} \la f (x) - E  & &  - 1  \\
1 & &  \hphantom{-}0 \\  \end{array} \right]  .$$

There are $\la_0 = \la_0 (t, f) < \infty$, $c = c (f) > 0$ and $b = b (d) > 0$, such that if $\sabs{\la} \ge \la_0$ then the top Lyapunov exponent of the cocycle $A_{\la, E}$
has the following asymptotic behavior.

\begin{subequations}\label{sorets-spencer-eqs}
\begin{enumerate}[(a)]
\item If $\sabs{E} \le 2 \, \sabs{\la} \, \norm{f}_r$ then   
\begin{equation}\label{sorets-spencer-eq1} 
L_1 ( A_{\la, E} ) = \int_{\T^d} \, \log \abs{ \la f (x) - E } \, d x + \mathcal{O} \left( e^{- c \, ( \log \sabs{\la} )^b} \right) .  
\end{equation}
\item If $\sabs{E} \ge 2 \, \sabs{\la} \, \norm{f}_r$ then   
\begin{equation}\label{sorets-spencer-eq2}
L_1 ( A_{\la, E} ) = \int_{\T^d} \, \log \abs{ \la f (x) - E } \, d x + \mathcal{O} \left( e^{- c \, ( \log \sabs{E} )^b} \right) .
\end{equation}
\end{enumerate}

In particular, there is $C_0 = C_0 (f) < \infty$ such that  if $\sabs{\la} \ge \la_0$ then
\begin{equation}\label{sorets-spencer-eq3}
L_1 ( A_{\la, E} ) > \log \sabs{\la} - C_0 \quad \text{for all } \, E \in \R .
\end{equation}
\end{subequations}
\end{proposition}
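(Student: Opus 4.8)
The plan is to prove the asymptotic formulas by a rescaling-and-limiting argument that leans on Theorem~\ref{cont-le} being valid \emph{at} identically singular cocycles, which is exactly the novel feature of this paper.

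\emph{Case (a).} Fix $\sabs{\la}\ge\la_0$ and $\sabs{E}\le 2\sabs{\la}\norm{f}_r$, and set $s:=E/\la$, so that $s$ ranges over the compact interval $I:=[-2\norm{f}_r,2\norm{f}_r]$. Factoring the scalar $\la$ out of $A_{\la,E}$ gives $A_{\la,E}=\la\,C_{\la,s}$, where $C_{\la,s}(x):=\left[\begin{smallmatrix} f(x)-s & -\la^{-1}\\ \la^{-1} & 0 \end{smallmatrix}\right]\in C^{\om}_{r}(\T^{d},\Mat_2(\R))$, hence $L_1(A_{\la,E})=\log\sabs{\la}+L_1(C_{\la,s})$. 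As $\sabs{\la}\to\infty$ the cocycle $C_{\la,s}$ converges, at the rate $\norm{C_{\la,s}-C_{\infty,s}}_r=\sabs{\la}^{-1}$, to the identically singular cocycle $C_{\infty,s}(x):=(f(x)-s)\,\diag(1,0)$. Since $\diag(1,0)^n=\diag(1,0)$ one computes $C_{\infty,s}^{(n)}(x)=\bigl(\prod_{j=0}^{n-1}(f(x+j\om)-s)\bigr)\diag(1,0)$, so $\LE{n}_1(C_{\infty,s})=\int_{\T^d}\log\sabs{f(x)-s}\,dx$ for every $n$, whence $L_1(C_{\infty,s})=\int_{\T^d}\log\sabs{f-s}\,dx$; moreover $C_{\infty,s}$ is not nilpotent, for if $C_{\infty,s}^{(n)}\equiv 0$ then, the ring of holomorphic functions on the connected set $\strip_r^d$ being an integral domain, some $f(\cdot+j\om)-s\equiv 0$, contradicting that $f$ is non-constant; hence by the criterion recalled in the introduction $L_1(C_{\infty,s})>-\infty$, while clearly $L_2(C_{\infty,s})=-\infty$ as $C_{\infty,s}^{(n)}(x)$ has rank $\le 1$. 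Thus $L_1(C_{\infty,s})>L_2(C_{\infty,s})$, and the weak-H\"older part of Theorem~\ref{cont-le} applies near $C_{\infty,s}$, giving $\sabs{L_1(C_{\la,s})-L_1(C_{\infty,s})}=\mathcal{O}\bigl(e^{-c(\log\sabs{\la})^b}\bigr)$ once $\sabs{\la}^{-1}$ lies in the corresponding neighbourhood. Combining, $L_1(A_{\la,E})=\log\sabs{\la}+\int_{\T^d}\log\sabs{f-s}\,dx+\mathcal{O}\bigl(e^{-c(\log\sabs{\la})^b}\bigr)=\int_{\T^d}\log\sabs{\la f-E}\,dx+\mathcal{O}\bigl(e^{-c(\log\sabs{\la})^b}\bigr)$, which is \eqref{sorets-spencer-eq1}.

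\emph{Uniformity in $E$, and case (b).} For the uniformity required in case (a), observe that $s\mapsto C_{\infty,s}$ is continuous (in fact affine) from the compact interval $I$ into $C^{\om}_{r}(\T^{d},\Mat_2(\R))$, so $K:=\{C_{\infty,s}:s\in I\}$ is compact; covering $K$ by finitely many of the \emph{open} weak-H\"older neighbourhoods provided by Theorem~\ref{cont-le} yields a single radius $\delta>0$ and a single weak-H\"older modulus valid on a uniform neighbourhood of $K$, and choosing $\la_0=\la_0(t,f)$ with $\la_0^{-1}\le\delta$ makes the estimate above hold uniformly in $E$. For case (b), when $\sabs{E}\ge 2\sabs{\la}\norm{f}_r$ one has $\sabs{\la f(z)-E}\ge\sabs{E}/2>0$ on $\strip_r^d$, so $\la f-E$ is a non-vanishing holomorphic factor and $A_{\la,E}(x)=(\la f(x)-E)\,\new{C}_{\la,E}(x)$ with $\new{C}_{\la,E}(x):=\left[\begin{smallmatrix} 1 & -(\la f(x)-E)^{-1}\\ (\la f(x)-E)^{-1} & 0 \end{smallmatrix}\right]\in C^{\om}_{r}(\T^{d},\Mat_2(\R))$. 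Since the scalars commute out of the iterated product, $A_{\la,E}^{(n)}(x)=\bigl(\prod_{j=0}^{n-1}(\la f(x+j\om)-E)\bigr)\new{C}_{\la,E}^{(n)}(x)$; integrating $\tfrac1n\log\norm{A_{\la,E}^{(n)}}$ over $\T^d$ and using translation invariance of Haar measure gives the exact identity $\LE{n}_1(A_{\la,E})=\int_{\T^d}\log\sabs{\la f-E}\,dx+\LE{n}_1(\new{C}_{\la,E})$ for every $n$, hence $L_1(A_{\la,E})=\int_{\T^d}\log\sabs{\la f-E}\,dx+L_1(\new{C}_{\la,E})$. As $\norm{\new{C}_{\la,E}-\diag(1,0)}_r\le 2/\sabs{E}$ and $\diag(1,0)$ is a \emph{fixed} cocycle with $L_1=0>-\infty=L_2$, the weak-H\"older part of Theorem~\ref{cont-le} near $\diag(1,0)$ gives $L_1(\new{C}_{\la,E})=\mathcal{O}\bigl(e^{-c(\log\sabs{E})^b}\bigr)$ once $\sabs{E}$ is large, which holds since $\sabs{E}\ge 2\la_0\norm{f}_r$ with $\la_0$ large. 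This is \eqref{sorets-spencer-eq2}.

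\emph{The lower bound and the main obstacle.} In either case $L_1(A_{\la,E})=\int_{\T^d}\log\sabs{\la f-E}\,dx+\mathrm{err}$ with $\sabs{\mathrm{err}}\le 1$ for $\sabs{\la}\ge\la_0$, since the error exponentials do not exceed $1$. Writing $\int_{\T^d}\log\sabs{\la f-E}\,dx=\log\sabs{\la}+\int_{\T^d}\log\sabs{f-E/\la}\,dx$ and noting that $s\mapsto\int_{\T^d}\log\sabs{f-s}\,dx=L_1(C_{\infty,s})$ is continuous on $\R$ (by the continuity statement of Theorem~\ref{cont-le} applied to $s\mapsto C_{\infty,s}$) and tends to $+\infty$ as $\sabs{s}\to\infty$, we obtain $\inf_{s\in\R}\int_{\T^d}\log\sabs{f-s}\,dx=:-C_0'>-\infty$, and therefore $L_1(A_{\la,E})>\log\sabs{\la}-C_0'-1=:\log\sabs{\la}-C_0$ for all $E\in\R$, which is \eqref{sorets-spencer-eq3}. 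The algebraic factorizations are routine; the two essential points — and the places where the results of the present paper are indispensable — are (i) the validity of Theorem~\ref{cont-le} \emph{at} the identically singular limit cocycles $C_{\infty,s}$ and $\diag(1,0)$, without which neither limiting argument could be run, and (ii) the upgrade from the pointwise-in-$E$ estimate to one uniform in $E$ in case (a), which rests on the compactness of $\{C_{\infty,s}\}_{s\in I}$ in $C^{\om}_{r}(\T^{d},\Mat_2(\R))$ together with the openness of the weak-H\"older neighbourhoods; I expect the latter bookkeeping to be the main technical nuisance.
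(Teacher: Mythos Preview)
Your argument is correct. Case~(a) and the uniformity-by-compactness step are essentially identical to the paper's proof. For case~(b), however, you take a genuinely different route: the paper factors out the scalar $E$, writing $A_{\la,E}=E\,S_{1/E,\,\la/E}$ with $S_{\delta,s}(x)=\left[\begin{smallmatrix} s f(x)-1 & -\delta\\ \delta & 0\end{smallmatrix}\right]$, and then runs a second compactness argument over the parameter $s=\la/E$ in the interval $[-1/(2\norm{f}_r),\,1/(2\norm{f}_r)]$. You instead exploit that in this regime $\la f-E$ is a nonvanishing holomorphic scalar on $\strip_r^d$, factor it out as a \emph{function}, and compare the remaining analytic cocycle $\new{C}_{\la,E}$ to the single constant cocycle $\diag(1,0)$. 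Your variant is slightly cleaner in that it avoids the second compactness step (only one limit cocycle is needed), at the cost of the extra observation that $(\la f-E)^{-1}$ extends holomorphically to $\strip_r^d$; the paper's version has the minor advantage of being a verbatim repetition of the case~(a) scheme. Your derivation of~\eqref{sorets-spencer-eq3} via the global lower bound $\inf_{s\in\R}\int_{\T^d}\log\sabs{f-s}\,dx>-\infty$ is also a mild reorganization of the paper's two-case split, and is perfectly valid.
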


\begin{proof} We first consider the case $\sabs{E} \le 2 \, \sabs{\la} \, \norm{f}_r$.

Given $\delta, s \in \R$ define the cocycle
$$S_{\delta, \, s} (x) := \left[ \begin{array}{ccc} 
 f (x)  - s  & &  -\delta  \\
\delta & &  \phantom{-}0 \\  \end{array} \right] \in \Mat_2 (\R) .$$

By factoring out $\la$ we have $A_{\la, E} = \la \, S_{\frac{1}{\la}, \, \frac{E}{\la}} $ so
$$L_1 ( A_{\la, E} ) = \log \sabs {\la} + L_1 ( S_{\frac{1}{\la}, \, \frac{E}{\la}} ) .$$ 

The map $(\delta, s) \mapsto S_{\delta, \, s}$ is Lipschitz. 

Moreover, 
for all $s \in \R$, the cocycle $S_{0, \, s} = \left[ \begin{array}{cc} 
 f (x)  - s  &  0  \\
0 &   0 \\  \end{array} \right]$, which is {\em identically singular},  satisfies  $$L_1 ( S_{0, \, s} ) = \int_{\T^d} \log \abs{f (x) - s} \, d x > - \infty$$ (because $f$ is analytic and non-constant) and $L_2 ( S_{0, \, s} ) = - \infty$.

In particular, $L_1 ( S_{0, \, s} ) > L_2 ( S_{0, \, s} )$, so by the continuity Theorem~\ref{cont-le},  the map $B \mapsto L_1 (B)$ is locally weak-H\"older near $S_{0, \, s}$ for every $s \in \R$.

Then by compactness, there is $\delta_0 > 0$ such that the map
$$[-\delta_0, \delta_0] \times \big[-2\norm{f}_r, \, 2  \norm{f}_r \big]  \ni (\delta, s) \mapsto L_1 (S_{\delta, \, s}) \in \R$$
is weak-H\"older continuous. 

Therefore,  there is a modulus of continuity function of the form $w (h) = C  e^{- c \, ( \log \frac{1}{h} )^b}$ such that if $\abs{\delta} \le \delta_0$ and if $\abs{s} \le 2  \norm{f}_r$ then
$$\abs{ L_1 (S_{\delta, \, s}) - L_1 (S_{0, \, s})  } \le w (\sabs{\delta}) \to 0 \quad \text{as } \delta \to 0 .$$

Let $\la_0 := \frac{1}{\delta_0}$. Then for all $\la, E$ with $\sabs{\la} \ge \la_0$ and $\sabs{E} \le 2 \, \sabs{\la} \, \norm{f}_r$, via the change of variables $\delta = \frac{1}{\la}$, $s = \frac{E}{\la}$,  from the above we have
$$\abs{ L_1 (S_{\frac{1}{\la}, \, \frac{E}{\la}}) - L_1 (S_{0, \, \frac{E}{\la}})  } \le w \big(\frac{1}{\sabs{\la}} \big) \to 0 \quad \text{as } \sabs{\la} \to \infty .$$

This then translates into
$$\babs{  L_1 ( A_{\la, E} ) - \log \sabs{\la} -  \int_{\T^d} \log \abs{ f (x) - \frac{E}{\la} } \, d x  } \le w \big(\frac{1}{\sabs{\la}}\big) ,$$
which implies~\eqref{sorets-spencer-eq1}.

\smallskip 

The second case, when $\sabs{E} \ge 2 \sabs{\la} \norm{f}_r$, is treated similarly\textemdash by factoring out $E$ instead. Indeed, for $\delta, s \in \R$, define the cocycle 
$$S_{\delta, \, s} (x) := \left[ \begin{array}{ccc} 
 s \, f (x)  - 1  & &  -\delta  \\
\delta & &  \phantom{-}0 \\  \end{array} \right] \in \Mat_2 (\R) .$$

By factoring out $E$ we have $A_{\la, E} = E \, S_{\frac{1}{E}, \, \frac{\la}{E}} $ so
$$L_1 ( A_{\la, E} ) = \log \sabs{E} + L_1 ( S_{\frac{1}{E}, \, \frac{\la}{E}} ) .$$ 

The cocycle $S_{0, \, s}$ is identically singular and for all $s \in \R$ we have $$L_1 ( S_{0, \, s} ) = \int_{\T^d} \log \abs{s f (x) - 1} \, d x > - \infty$$  (since $f$ is analytic and non-constant), while $L_2 ( S_{0, \, s} ) = - \infty$.  The continuity Theorem~\ref{cont-le} is again applicable, ensuring weak-H\"older continuity of the top Lyapunov exponent locally near $S_{0, \, s} $ for every $s \in \R$. 
Then by a simple compactness argument, there is $\delta_0 > 0$ such that the map
 $$[-\delta_0, \delta_0] \times \big[- \frac{1}{2 \norm{f}_r},  \, \frac{1}{2  \norm{f}_r}  \big]  \ni (\delta, s) \mapsto L_1 (S_{\delta, \, s}) \in \R$$
has a weak-H\"older modulus of continuity $w (h)$ as before, that is,
$$\abs{ L_1 (S_{\delta, \, s}) - L_1 (S_{0, \, s})  } \le w (\sabs{\delta}) .$$

Let $\la_0 := \frac{1}{\delta_0 \, 2 \norm{f}_r}$. With the change of coordinates $\delta = \frac{1}{E}$, $s = \frac{\la}{E}$, if $\sabs{\la} \ge \la_0$ and $\sabs{E} \ge 2 \sabs{\la} \norm{f}_r$, then the estimate above applies and we have
$$\abs{ L_1 (S_{\frac{1}{E}, \, \frac{\la}{E}}) - L_1 (S_{0, \, \frac{\la}{E}})  } \le w \big(\frac{1}{\sabs{E}} \big) \to 0 \quad \text{as } \sabs{E} \to \infty .$$

This then translates into
$$\babs{  L_1 ( A_{\la, E} ) - \log \sabs{E} -  \int_{\T^d} \log \abs{ \frac{\la}{E} \, f (x) - 1 } \, d x  } \le w \big(\frac{1}{\sabs{E}}\big) ,$$
which implies~\eqref{sorets-spencer-eq2}.

\smallskip

To derive the last estimate, first note that 
since $f (x) - s \not \equiv 0$ for every $s \in \R$, if $I \subset \R$ is compact, then the map 
$$I \ni s \mapsto  \int_{\T^d} \log \abs{ f (x) - s } \, d x \in \R$$
has a finite lower bound. 

This is due to the {\L}ojasiewicz inequality (we formulate it below, in Proposition~\ref{Loj-prop}, see also Remark 6.2 in \cite{DK-book}), which holds uniformly in a neighborhood of an analytic, non-identically zero function. Alternatively, although this is an overkill, since $\int_{\T^d} \log \sabs{ f (x) - s } d x$ represents the Lyapunov exponent of the one-dimensional cocycle $f - s$, by the same continuity theorem of the Lyapunov exponents, the map above depends continuously on $s$, hence it has a finite lower bound on any compact set.

Then either we have $\sabs{E} \le 2 \sabs{\la} \norm{f}_r$, so $\frac{E}{\la}$ is in a compact set, and applying ~\eqref{sorets-spencer-eq1} we have
$$L_1 ( A_{\la, E} ) \ge  \log \sabs{\la} + \int_{\T^d} \, \log \abs{ f (x) - \frac{E}{\la} } \, d x - w \big(\frac{1}{\sabs{\la}} \big) > \log \sabs{\la}  - C_0 \, ;$$

Or $\sabs{E} \ge 2 \sabs{\la} \norm{f}_r$, so for all $x \in \T^d$, $\abs{\frac{\la}{E} \, f (x)} \le \frac{1}{2}$ and applying ~\eqref{sorets-spencer-eq2} we have
\begin{align*}
L_1 ( A_{\la, E} ) & \ge  \log \sabs{E} + \int_{\T^d} \, \log \abs{\frac{\la}{E} \, f (x) - 1 } \, d x - w \big(\frac{1}{\sabs{E}} \big) \\
& \ge \log \sabs{\la} + \log 2 \norm{f}_r + \log \frac{1}{2} - w \big(\frac{1}{\sabs{E}} \big) >
\log \sabs{\la}  - C_0 .
\end{align*}

This completes the proof.
\end{proof}

\begin{remark}
Having uniform lower bounds of the form $\more \log \sabs{\la}$ on the Lyapunov exponent of a Schr\"odinger cocycle with analytic potential is of course not new. 
M. Herman~\cite{Herman} and E. Sorets and T. Spencer~\cite{Sorets-Spencer} are classical results on this topic, with uniform bounds in $\om \in \T$. 

A similar estimate to~\eqref{sorets-spencer-eq1}  (also under a Diophantine assumption on the frequency) was obtained in the case $d=1$ by J. Bourgain (see Proposition 11.31 in \cite{B}), although the error there is less sharp/explicit.

Moreover, asymptotic formulas for the LE of $2$-dimensional Jacobi cocycles over the $1$-variable torus translation were obtained in~\cite{JitMarx-CMP}. And the continuity theorem in~\cite{AJS} for general, higher dimensional cocycles over the same $1$-variable torus may be employed to the same effects.

\smallskip

What is new here is the {\em precision} of the estimates~\eqref{sorets-spencer-eqs}, especially in the case of $f \in \analyticf{d}$ with $d > 1$.  \footnote{When $d=1$, an even more precise estimate, which is moreover uniform in the frequency, was very recently obtained in~\cite{Han-Marx}.}  
\end{remark}

Next we formulate similar but more general consequences of the continuity Theorem~\ref{cont-le}, namely some sufficient criteria for the positivity and simplicity of the LE of a quasi-periodic cocycle. 

\begin{theorem}\label{pos-sim-thm} Given $\om \in \DC_t$ and dimensions $1\le l < m$, for every $\la \neq 0$
consider a cocycle $A_\la \in \qpcmat{d}$ with a block structure
$$A_{\la} = \left[ \begin{array}{ccc} \la \, M & &  N  \\
P & &  Q \\  \end{array} \right]  ,$$
where $M$ is a cocycle of dimension $l$, i.e. $M \in \qpcmatl{d}$. 

There is $\la_0 = \la_0 (t, M, \norm{N}_r, \norm{P}_r, \norm{Q}_r) < \infty$  such that for all $\sabs{\la} \ge \la_0$ the following hold.
\begin{subequations}\label{pos-sim-eq}
\begin{enumerate}[(a)]
\item If $M$ is non identically singular, i.e. if $\det [ M (x) ] \not \equiv 0$, then there is $C_0 = C_0 (M) < \infty$ such that
\begin{equation}\label{pos-sim-eq1}
L_l (A_\la) > \log \sabs{\la} - C_0 .
\end{equation}
\item If all Lyapunov exponents of $M$ are simple, then the $l$ largest Lyapunov exponents of $A_\la$ are also simple. 

In fact, a precise estimate on the gap between consecutive Lyapunov exponents holds in this case. There are positive constants $c = c(M)$ and $b = b (d)$ such that for all $1 \le k \le l$,
\begin{align}
L_k ( A_{\la} ) &= \log \sabs{\la} + L_k (M)  + \mathcal{O} \left( e^{- c \, (\log \sabs{\la})^b} \right), \text{ so } \label{pos-sim-eq2}\\
L_k ( A_{\la} ) - L_{k+1} ( A_{\la} ) &= L_k ( M) - L_{k+1} ( M )  + \mathcal{O} \left( e^{- c \, (\log \sabs{\la})^b} \right) \label{pos-sim-eq3}
\end{align}
for all $k < l$.
\end{enumerate} 
\end{subequations}
\end{theorem}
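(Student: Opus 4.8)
The plan is to mimic the structure of the proof of Proposition~\ref{sorets-spencer-thm}, but now working with the block cocycle $A_\la$ and exploiting Theorem~\ref{cont-le} in its full strength (continuity of the partial sums $L_1 + \ldots + L_k$, and weak-H\"older control of these sums near a cocycle with a spectral gap at position $k$). First I would factor out $\la$: write $A_\la = \la\, S_{1/\la}$, where for $\delta \in \R$ we set
$$
S_\delta := \begin{bmatrix} M & \delta N \\ \delta P & \delta Q \end{bmatrix} \in \qpcmat{d},
$$
so that $L_k(A_\la) = \log\sabs{\la} + L_k(S_{1/\la})$ for every $k$ (this scaling identity holds because multiplying a cocycle by a nonzero scalar shifts all LE by $\log$ of its absolute value). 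The map $\delta \mapsto S_\delta$ is Lipschitz from $\R$ into $\qpcmat{d}$, and the "limiting" cocycle is
$$
S_0 = \begin{bmatrix} M & 0 \\ 0 & 0 \end{bmatrix},
$$
which is block-diagonal and identically singular. Its Lyapunov exponents are exactly those of $M$ together with $m-l$ copies of $-\infty$: indeed $S_0^{(n)}(x) = \mathrm{diag}(M^{(n)}(x), 0)$, so $s_k(S_0^{(n)}(x)) = s_k(M^{(n)}(x))$ for $k \le l$ and $0$ for $k > l$. In particular $L_k(S_0) = L_k(M)$ for $1 \le k \le l$ and $L_k(S_0) = -\infty$ for $k > l$.

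For part (a): if $M$ is non-identically singular, then $L_l(M) > -\infty$ (this is the content of the fact, quoted in the introduction, that an analytic cocycle has $L_1 = -\infty$ iff it is nilpotent — applied here to the appropriate exterior power, or more simply because $\det M(x) \not\equiv 0$ gives $\int \log\sabs{\det M} > -\infty$ by the \L ojasiewicz inequality of Proposition~\ref{Loj-prop}, and $L_1(M) + \ldots + L_l(M) = \int \log\sabs{\det M\,}\,dx$). Hence $L_l(S_0) = L_l(M) > -\infty = L_{l+1}(S_0)$, so there is a spectral gap at position $l$ and Theorem~\ref{cont-le} applies: the partial sum $B \mapsto (L_1 + \ldots + L_l)(B)$ is weak-H\"older continuous in a neighborhood of $S_0$. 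Since $L_1(S_0) + \ldots + L_l(S_0) = L_1(M) + \ldots + L_l(M)$ is finite, and since for $B$ near $S_0$ each of $L_1(B), \ldots, L_l(B)$ stays bounded (continuity of each individual LE, also from Theorem~\ref{cont-le}), I get for $\sabs\delta$ small
$$
\babs{\,L_l(S_\delta) - L_l(M)\,} = \babs{\,(L_1 + \ldots + L_l)(S_\delta) - (L_1 + \ldots + L_{l-1})(S_\delta) - (L_1 + \ldots + L_l)(S_0) + (L_1 + \ldots + L_{l-1})(S_0)\,};
$$
but this requires also a gap at position $l-1$, which we do not have in general, so instead I would run the argument directly on the individual exponent: each $L_k$ is continuous (first part of Theorem~\ref{cont-le}) on $\qpcmat{d}$, hence $L_l(S_\delta) \to L_l(S_0) = L_l(M)$ as $\delta \to 0$, which together with the scaling identity gives $L_l(A_\la) = \log\sabs\la + L_l(M) + o(1) > \log\sabs\la - C_0$ for $\sabs\la \ge \la_0$ with $C_0 = C_0(M) := -L_l(M) + 1$. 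The dependence of $\la_0$ on $M, \norm N_r, \norm P_r, \norm Q_r$ comes from how small $\sabs\delta = 1/\sabs\la$ must be, which depends on the Lipschitz constant of $\delta \mapsto S_\delta$ (governed by $\norm N_r, \norm P_r, \norm Q_r$) and on the size of the neighborhood of $S_0$ on which continuity kicks in (governed by $M$ via Theorem~\ref{cont-le}).

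For part (b): if all LE of $M$ are simple, then $L_1(M) > L_2(M) > \ldots > L_l(M) > -\infty = L_{l+1}(S_0) = \ldots$, so $S_0$ has simple LE at every position $k \le l$, i.e. $L_k(S_0) > L_{k+1}(S_0)$ for every $k \le l$. Thus for each such $k$, Theorem~\ref{cont-le} gives that $B \mapsto (L_1 + \ldots + L_k)(B)$ is weak-H\"older near $S_0$. Telescoping, $L_k(S_\delta) = (L_1+\ldots+L_k)(S_\delta) - (L_1+\ldots+L_{k-1})(S_\delta)$ is a difference of two weak-H\"older functions (both partial sums have a gap at their top index when $k \le l$ and $k-1 \le l$, using the convention that the empty sum is the constant $0$ which is trivially weak-H\"older), hence weak-H\"older near $S_0$, with modulus $w(h) = C e^{-c(\log(1/h))^b}$, $c = c(M)$, $b = b(d)$. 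Evaluating at $\delta = 1/\la$ and using $L_k(S_0) = L_k(M)$:
$$
\babs{\,L_k(S_{1/\la}) - L_k(M)\,} \le w\big(\tfrac{1}{\sabs\la}\big) = C\, e^{-c\,(\log\sabs\la)^b},
$$
which via the scaling identity is exactly \eqref{pos-sim-eq2}; subtracting the estimates for consecutive $k < l$ yields \eqref{pos-sim-eq3}, and since $L_k(M) - L_{k+1}(M) > 0$ is a fixed positive number, for $\sabs\la$ large the error is dominated and $L_k(A_\la) - L_{k+1}(A_\la) > 0$, i.e. the $l$ largest LE of $A_\la$ are simple.

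\textbf{Main obstacle.} The delicate point is not the limiting/compactness argument, which is routine given Theorem~\ref{cont-le}, but rather making sure Theorem~\ref{cont-le} is genuinely applicable at $S_0$ — i.e. that the hypotheses of the weak-H\"older part ($L_k(S_0) > L_{k+1}(S_0)$) hold, and that the resulting modulus of continuity and the radius $\delta$ of the neighborhood depend only on the advertised data $(t, M, \norm N_r, \norm P_r, \norm Q_r)$ and not on $\la$ (so that a single $\la_0$ works). This in turn rests on verifying that $S_0$ is a legitimate object in $\qpcmat{d}$ with $L_1(S_0) > L_2(S_0)$ (needed to even have uniform fiber LDT, Theorem~\ref{intro-ufiber-ldt-thm}, in a neighborhood) — which holds here precisely because $L_1(S_0) = L_1(M)$ and $M$ has simple LE in case (b), while in case (a) we only directly know $L_l(M) > -\infty$ and must separately check (or assume, as the statement is phrased) that this suffices; the cleanest route in case (a) is to observe that $\det M \not\equiv 0$ forces $L_1(M) \ge L_l(M) > -\infty$ but $S_0$ might still have $L_1(S_0) = L_2(S_0)$, in which case one applies the continuity theorem not at position $1$ but uses only the first ("qualitative") part of Theorem~\ref{cont-le} for the individual exponent $L_l$, which is all part (a) claims. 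Keeping straight which part of Theorem~\ref{cont-le} is invoked where, and propagating the parameter dependencies correctly through the scaling $\delta = 1/\la$, is the real bookkeeping burden of the proof.
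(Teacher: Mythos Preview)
Your proposal is correct and follows essentially the same approach as the paper: factor out $\la$ to write $A_\la = \la\, S_{1/\la}$, identify the Lyapunov exponents of the limiting (identically singular) cocycle $S_0$ with those of $M$ together with $-\infty$'s, and invoke Theorem~\ref{cont-le}---its qualitative part for (a) and its quantitative weak-H\"older part for (b)---then translate back via the scaling identity. Your telescoping of partial sums in part (b) makes explicit a step the paper leaves implicit, and your eventual resolution of part (a) via the qualitative continuity of the individual exponent $L_l$ (rather than partial sums) matches the paper exactly, including the constant $C_0 = -L_l(M) + 1$.
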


\begin{remark}
By factoring out $\la$, the cocycle in the above theorem can be written as $A_\la = \la \, S_{\frac{1}{\la}}$, where $S_\delta :=  \left[ \begin{array}{cc} M & \delta  N  \\
\delta P & \delta  Q \\  \end{array} \right]$. 

Since as $\delta \to 0$, $S_\delta \to  \left[ \begin{array}{cc} M & 0 \\
0 & 0 \\  \end{array} \right]$, by the continuity Theorem~\ref{cont-le} we have that for all $1 \le k \le l$, $L_k (S_\delta) \approx L_k (M)$ as $\delta \to 0$.

Then $L_k (A_\la) \approx \log \sabs{\la} + L_k (M)$ as $\sabs{\la} \to \infty$, so if $k < l$ we also have
$$L_k (A_\la) - L_{k+1} (A_\la) \approx L_k (M) - L_{k+1} (M) \quad \text{ as }  \sabs{\la} \to \infty.$$

This says that whatever the multiplicity of a Lyapunov exponent of the block $M$, if $\sabs{\la} \gg 1$ then the multiplicity of the corresponding Lyapunov exponent of $A_\la$ is the same or lesser.

Item (b) in Theorem~\ref{pos-sim-thm}  gives a precise estimate on the gap size between consecutive Lyapunov exponents when they are all simple.
\end{remark}

As suggested already by Proposition~\ref{sorets-spencer-thm}, in applications to ma\-the\-matical physics problems
it is useful to have a lower bound on the Lyapunov exponents which is uniform with respect
to an extra pa\-ra\-me\-ter. That parameter corresponds to the energy  $E$ of a Schr\"odinger-like family of cocycles.

Below we formulate such a result on the uniform positivity of the Lyapunov exponents  for a one-parameter family of higher dimensional cocycles. This uniform bound will then be applicable to the Lyapunov exponents associated with (band lattice) Schr\"odinger or Jacobi operators (see Section~\ref{mathphys}).

\begin{theorem}\label{pos-energy-thm} Given $\om \in \DC_t$ and dimensions $1 \le l < m$, 
consider the family of cocycles $A_{\la, E} \in \qpcmat{d}$ with a block structure
$$A_{\la, E} = \left[ \begin{array}{ccc} M_{\la, E} & &  N  \\
P & &  Q \\  \end{array} \right] ,$$
where the parameters $\la \neq 0$, $E \in \R$ and the block $M_{\la, E}$ has the form
$$M_{\la, E} (x) = U (x) \, (\la \, F (x) + R (x) - E \, I)$$ 
for  some $U, F, R \in \qpcmatl{d}$, and with $I \in \Mat_l (\R)$ denoting the identity matrix.

We assume that $U$ is non-identically singular and that $F$ has no constant eigenvalues, i.e. $\det [ U (x) ] \not \equiv 0$ and $\det [ F (x) - s \, I ] \not \equiv 0$ for all $s \in \R$. 

There are constants $\la_0 = \la_0 (t, U, F, \norm{R}_r, \norm{N}_r, \norm{P}_r, \norm{Q}_r) < \infty$ and $C_0 = C_0 (U, F) < \infty$  such that if $\sabs{\la} \ge \la_0$ then 
$$L_l (A_{\la, E} ) > \log \sabs{\la} - C_0 \quad \text{for all } E \in \R .$$ 
\end{theorem}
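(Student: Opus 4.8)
The plan is to reduce this to Theorem~\ref{pos-sim-thm}(a) by a change of variables that absorbs the energy $E$ into the coupling, combined with a compactness argument that makes the threshold $\la_0$ uniform in $E$. First I would observe that the block $M_{\la,E}(x) = U(x)\,(\la\,F(x) + R(x) - E\,I)$ can be rewritten, by pulling out a scalar, in the two natural regimes. When $\sabs{E} \le 2\,\sabs{\la}\,\norm{F}_r$ (say), the parameter $s := E/\la$ ranges over a fixed compact interval $I_0 \subset \R$, and $M_{\la,E} = \la\,U\,(F - s\,I + \frac{1}{\la}R)$; when $\sabs{E} \ge 2\,\sabs{\la}\,\norm{F}_r$, the parameter $s := \la/E$ ranges over a fixed compact interval and $M_{\la,E} = E\,U\,(s\,F - I + \frac{1}{E}R)$. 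In the first regime, factoring $\la$ out of the whole cocycle, $A_{\la,E} = \la\,S_{1/\la,\,s}$ where $S_{\delta,\,s} := \left[ \begin{array}{cc} U\,(F - s\,I + \delta R) & \delta N \\ \delta P & \delta Q \end{array} \right]$; in the second regime, factoring $E$ out, $A_{\la,E} = E\,\tilde S_{1/E,\,s}$ with an analogous definition using $s\,F - I$ in place of $F - s\,I$.

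Next I would apply Theorem~\ref{pos-sim-thm}(a) — or rather its hypothesis-checking — pointwise in $s$. For each fixed $s$, the cocycle $S_{0,\,s} = \left[ \begin{array}{cc} U\,(F - s\,I) & 0 \\ 0 & 0 \end{array} \right]$ has top $l$ block equal to $M_s := U\,(F - s\,I)$, which is non-identically singular precisely because $\det U \not\equiv 0$ and $\det(F - s\,I) \not\equiv 0$ by hypothesis, so $\det M_s \not\equiv 0$. Hence $L_l(S_{0,\,s}) = L_1(M_s) + \cdots$ controlled below: more precisely, since $M_s$ is non-identically singular, $L_l(S_{0,\,s}) > -\infty$ while $L_{l+1}(S_{0,\,s}) = -\infty$, so $L_l(S_{0,\,s}) > L_{l+1}(S_{0,\,s})$, and the continuity Theorem~\ref{cont-le} gives weak-H\"older continuity of $B \mapsto (L_1 + \cdots + L_l)(B)$ — hence, combined with the analogous statement at level $l-1$, of $B \mapsto L_l(B)$ itself — locally near $S_{0,\,s}$. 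The map $(\delta, s) \mapsto S_{\delta,\,s}$ is Lipschitz into $\qpcmatl{d}$-block-structured cocycles, and $s$ ranges over a compact set, so by compactness there is a uniform $\delta_0 > 0$ and a uniform weak-H\"older modulus $w$ with $\abs{L_l(S_{\delta,\,s}) - L_l(S_{0,\,s})} \le w(\sabs{\delta})$ for $\sabs{\delta} \le \delta_0$. Finally, one needs a uniform lower bound on $s \mapsto L_l(S_{0,\,s}) = L_l\big(U\,(F - s\,I)\big)$ over the compact $s$-range: this follows from the {\L}ojasiewicz inequality (Proposition~\ref{Loj-prop}) applied to the analytic, non-identically-zero function $x \mapsto \det\big(U(x)\,(F(x) - s\,I)\big)$ uniformly in $s$, or alternatively from continuity of $s \mapsto L_l(M_s)$ on a compact set. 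Setting $\la_0 := 1/\delta_0$ in the first regime (and $\la_0 := 1/(2\,\delta_0\,\norm{F}_r)$ or similar in the second), for $\sabs{\la} \ge \la_0$ we get
$$L_l(A_{\la,E}) = \log\sabs{\la} + L_l(S_{1/\la,\,s}) \ge \log\sabs{\la} + L_l(M_s) - w\big(\tfrac{1}{\sabs{\la}}\big) > \log\sabs{\la} - C_0$$
in the first regime, and analogously $L_l(A_{\la,E}) \ge \log\sabs{E} + L_l(\tilde S_{0,\,s}) - w(\tfrac{1}{\sabs{E}}) \ge \log\sabs{\la} + \log(2\norm{F}_r) - C_0' > \log\sabs{\la} - C_0$ in the second (using $\sabs{E} \ge 2\sabs{\la}\norm{F}_r$); taking $C_0 = C_0(U,F)$ the larger of the two constants finishes the proof.

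The main obstacle I anticipate is the \emph{uniformity of the continuity modulus and of the $\la_0$ threshold} across the two parameter regimes and across the noncompact range of $E$. The continuity Theorem~\ref{cont-le} only gives a \emph{local} weak-H\"older modulus near each $S_{0,\,s}$, with a neighborhood size and modulus that a priori depend on $s$; the compactness argument rescues this, but only because the reparametrizations $s = E/\la$ and $s = \la/E$ confine $s$ to fixed compact sets. One must be slightly careful that the \emph{same} $\delta_0$ works for all $s$ in the relevant compact set and that the block structure (the $N, P, Q$ blocks scaled by $\delta$) does not interfere — but it does not, since $S_{0,\,s}$ genuinely has $L_{l+1} = -\infty$ thanks to the lower-right block vanishing identically, so the spectral gap $L_l > L_{l+1}$ is robust. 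The second genuinely technical point is verifying that $\det\big(U(x)(F(x) - s\,I)\big) \not\equiv 0$ uniformly enough in $s$ to extract a uniform-in-$s$ lower bound via {\L}ojasiewicz; this is where the hypotheses $\det U \not\equiv 0$ and $\det(F - s\,I) \not\equiv 0$ for all $s$ are used in an essential way, together with the fact that {\L}ojasiewicz constants vary upper-semicontinuously, hence are uniform on compact families of analytic functions.
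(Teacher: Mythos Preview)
Your proposal is correct and follows essentially the same route as the paper: split into the two regimes $\sabs{E}\le 2\sabs{\la}\norm{F}_r$ and $\sabs{E}\ge 2\sabs{\la}\norm{F}_r$, factor out the dominant parameter, reparametrize by $(\delta,s)$, identify the limiting identically singular cocycle $S_{0,s}$ with upper block $U(F-sI)$ (resp.\ $U(sF-I)$), and conclude via continuity of $L_l$ plus compactness in $s$.

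One small correction: your detour through the weak-H\"older statement is unnecessary and, as written, unjustified---getting $L_l$ itself weak-H\"older by subtracting the blocks at levels $l$ and $l-1$ would require $L_{l-1}(S_{0,s})>L_l(S_{0,s})$, which is not assumed. The paper (and your argument, once streamlined) needs only the \emph{unconditional} continuity of each $L_k$ from the first clause of Theorem~\ref{cont-le}: continuity of $(\delta,s)\mapsto L_l(S_{\delta,s})$ plus compactness of the $s$-range already yields a uniform $\delta_0$ and a uniform lower bound $-C_0$ on $L_l(S_{\delta,s})$, which is all that is used.
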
   

In Section~\ref{mathphys} we specialize to cocycles associated to block Jacobi operators. These types of operators generalize in different ways the Schr\"odinger operator~\eqref{ s op} and they may  in some sense be regarded as approximations of higher dimensional discrete Schr\"odinger operators. 

The results above imply the continuity of the Lyapunov exponents of such an operator. They also provide (for a large enough coupling constant) optimal lower bounds on its non-negative (i.e. the first half) Lyapunov exponents, sufficient criteria for their simplicity and weak-H\"older continuity of its integrated density of states.  

By analogy to discrete Schr\"odinger operators like~\eqref{ s op}, the availability of these properties could prove crucial in the further study of the spectral properties of block Jacobi operators.

\medskip

The rest of the paper is organized as follows.

In Section~\ref{non-unif-ldt} we prove  non uniform fiber LDT estimates for 
identically singular cocycles (Theorem~\ref{nonunif-ldt-thm}).
For that we show that such cocycles are semi-conjugated to maximal rank  cocycles 
(Lemma~\ref{semi-conjugation}). This in turn allows us to relate
the pointwise finite scale LE of an identically singular cocycle
with those of its maximal rank reduction
(Propositions~\ref{reduction-prop} and~\ref{reduction-thm}).

In Section~\ref{ssh-estimates} we prove a quantitative Birkhoff ergodic theorem (Theorem~\ref{qBET-ssh-thm})
 for functions with a separately subharmonic extension to $\strip_r^d$, under the weaker assumption of having a lower bound only at {\em some} point on $\T^d$.

In Section~\ref{ufiber-ldt} we prove our main result,
 on the existence of uniform fiber LDT estimates for  identically singular cocycles (Theorem~\ref{u-fiber-ldt-thm}).

In Section~\ref{proofs} we prove Theorems~\ref{cont-le} and \ref{cont-oseledets} on the continuity of the Lyapunov exponents and of the Oseledets filtration. Then we prove Theorems~\ref{pos-sim-thm} and~\ref{pos-energy-thm} on the positivity and simplicity of LE for certain families of cocycles.

Finally, in Section~\ref{mathphys} we apply the previous results to block Jacobi operators (Theorem~\ref{pos-band-op} and Corollary~\ref{Jacobi IDS}).


\section{The proof of the non-uniform fiber LDT}\label{non-unif-ldt}
\newcommand{\rank}[1]{{\rm rank}({#1})}
\newcommand{\Ker}[1]{{\rm Ker}({#1})}
\newcommand{\adj}{{\rm adj}}
\newcommand{\Range}[1]{{\rm Range}({#1})}

\newcommand{\ldtmeas}{\mathscr{C}}

\newcommand{\bigo}{\mathcal{O}}

\newcommand{\inv}{^{+}}


In a previous work (see \cite{DK-book}), we established (uniform) fiber LDT estimates for analytic cocycles $R (x)$ that are {\em non identically singular}, in the sense that $\det [ R (x) ] \not \equiv 0$.  

The goal of this section is to establish a non-uniform fiber LDT estimate for identically singular cocycles $A \in \qpcmat{d}$ with $L_1 (A) > - \infty$. This is obtained by means of semi-conjugating the iterates of $A (x)$ with those of a reduced maximal rank cocycle $R_A (x)$. Having maximal rank, the reduced cocycle $R_A (x)$ is non-identically singular, hence it satisfies a fiber LDT estimate, and the semi-conjugacy relation allows us to transfer over the LDT estimate to $A (x)$. We note that this approach is {\em not stable} under perturbations of the original cocycle $A (x)$, hence it cannot provide a uniform fiber LDT estimate.

%

\subsection*{Some linear algebra considerations.}
Let $1\leq k \leq m$ and consider a matrix $V\in \Mat_{k\times m}(\R)$
with rank $k$, which is equivalent to saying that
$\det(V V^T)\neq 0$.
The matrix $V$ determines a linear map $V:\R^m\to \R^k$.
Let $V\inv:\R^k \to \R^m$ be the corresponding pseudo-inverse.
Geometrically we take $W=\Ker{V}^\perp$, so that
$V\vert_W:W\to \R^k$ is an isomorphism and set
$$V\inv:= (V\vert_W)^{-1}:\R^k\to W\subset\R^m. $$

The next proposition shows that this definition matches the usual
Moore-Penrose pseudo-inverse.

\begin{lemma}
\label{pseudo-inverse:char}
Given  $V\in \Mat_{k\times m}(\R)$
with rank $k$,
$$ V\inv = V^T ( V V^T)^{-1} .$$
\end{lemma}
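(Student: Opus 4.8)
The plan is to set $G := V^T (V V^T)^{-1}$ and to verify directly that $G$ satisfies the two defining properties of the pseudo-inverse $V\inv = (V\vert_W)^{-1}$ introduced above, where $W = \Ker{V}^\perp$: namely that $\Range{G} \subseteq W$ and that $V G = I_k$. Together these force $G = V\inv$, because a linear map from $\R^k$ into $W$ that is a right inverse of the isomorphism $V\vert_W$ must be its two-sided inverse.

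First I would check that $G$ is well defined, i.e. that the $k \times k$ matrix $V V^T$ is invertible. It is symmetric and positive semidefinite, and if $x^T V V^T x = \norm{V^T x}^2 = 0$ then $V^T x = 0$; but $V$ has rank $k$, so $V^T : \R^k \to \R^m$ is injective and hence $x = 0$. Thus $V V^T$ is positive definite, in particular invertible \textemdash this is simply the restatement of the hypothesis $\det (V V^T) \neq 0$ already noted in the text.

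Next, using the standard identity $\Range{V^T} = \Ker{V}^\perp = W$, for every $y \in \R^k$ we have $G y = V^T\big( (V V^T)^{-1} y \big) \in \Range{V^T} = W$, so $\Range{G} \subseteq W$; while $V G = V V^T (V V^T)^{-1} = I_k$, so the composition $(V\vert_W) \circ G$ equals $\id_{\R^k}$. Since $\dim W = m - \dim \Ker{V} = m - (m - k) = k$ and $V\vert_W$ is injective (its kernel is $W \cap \Ker{V} = \{0\}$), the map $V\vert_W : W \to \R^k$ is an isomorphism, and a right inverse of an isomorphism is its inverse; hence $G = (V\vert_W)^{-1} = V\inv$, which is exactly the claimed formula.

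There is no genuine obstacle in this argument: the only points requiring (entirely routine) care are the invertibility of $V V^T$ and the identification $\Range{V^T} = \Ker{V}^\perp$. Alternatively, one could verify the four Moore\textendash Penrose equations for $G$ directly, but the geometric characterisation of $V\inv$ recalled above makes the verification shorter and more transparent.
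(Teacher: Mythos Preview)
Your proof is correct and follows essentially the same logic as the paper's: both arguments verify that $G := V^T (V V^T)^{-1}$ has range contained in $W = \Ker{V}^\perp$ and satisfies $V G = I_k$, which forces $G = (V\vert_W)^{-1} = V\inv$. The only difference is in execution: the paper first diagonalises the positive definite matrix $V V^T$ and checks these two properties on an orthonormal eigenbasis, whereas you work basis-free, invoking the identity $\Range{V^T} = \Ker{V}^\perp$ and the algebraic cancellation $V V^T (V V^T)^{-1} = I_k$ directly. Your route is slightly more streamlined, but the underlying idea is the same.
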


\begin{proof}
Since $V V^T\in \Mat_k(\R)$ is a positive symmetric matrix, it admits an orthonormal basis of eigenvectors $v_j$, $j=1,\ldots, k$
such that $V V^T v_j = \lambda_j v_j$ with $\lambda_j>0$.
Hence $(V V^T)^{-1} v_j = \lambda_j^{-1} v_j$ and for all $j=1,\ldots, k$
\begin{align*}
V[ V^T (V V^T)^{-1} v_j ] &= \lambda_j^{-1} V V^T v_j = v_j .
\end{align*}
Now, because  $V^T (V V^T)^{-1} v_j \in W$,
it follows that for all $j=1,\ldots, k$
$$ V\inv v_j = V^T (V V^T)^{-1} v_j . $$
Because these vectors form a basis of $\R^k$ the identity follows.
\end{proof}

\begin{lemma}\label{pseudo-inv-estimate}
If  $V\in \Mat_{k\times m}(\R)$
has rank $k$, then
$$ \frac{1}{\norm{V}} \le \norm{V\inv} \leq \sqrt{ \frac{\norm{\adj (V V^T)}}{\det(V V^T)}}  \ ,$$
where $\adj$ denotes the adjugate of a matrix, i.e. the transpose of its cofactor matrix.
\end{lemma}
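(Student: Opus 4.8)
The plan is to prove the two inequalities separately, using the characterization $V\inv = V^T(VV^T)^{-1}$ from Lemma~\ref{pseudo-inverse:char} together with the geometric description $V\inv = (V|_W)^{-1}$ where $W = \Ker{V}^\perp$.

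For the lower bound, I would argue via operator norms of the composition. Since $V V\inv = I$ on $\R^k$ (this follows from the computation in the proof of Lemma~\ref{pseudo-inverse:char}, where $V[V^T(VV^T)^{-1}v_j] = v_j$ for a basis $v_j$), submultiplicativity of the operator norm gives $1 = \norm{I} = \norm{V V\inv} \le \norm{V}\,\norm{V\inv}$, hence $\norm{V\inv} \ge 1/\norm{V}$. Alternatively, and perhaps more transparently, one can say that $V|_W : W \to \R^k$ is an isomorphism with inverse $V\inv$, so $\norm{V\inv} = \norm{(V|_W)^{-1}} \ge 1/\norm{V|_W} \ge 1/\norm{V}$, the last step because $W$ is a subspace of $\R^m$.

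For the upper bound, the plan is to use the explicit formula $V\inv = V^T(VV^T)^{-1}$ and the identity $(VV^T)^{-1} = \adj(VV^T)/\det(VV^T)$ (valid since $\det(VV^T)\neq 0$ by the rank hypothesis). A crude submultiplicative estimate would give $\norm{V\inv} \le \norm{V^T}\,\norm{\adj(VV^T)}/\det(VV^T) = \norm{V}\,\norm{\adj(VV^T)}/\det(VV^T)$, but this is weaker than the claimed bound; the square root and the absence of a $\norm{V}$ factor show that a more careful argument is needed. The right approach is to estimate $\norm{V\inv}$ via $\norm{V\inv}^2 = \norm{(V\inv)^T V\inv}$ (operator norm of a self-adjoint operator), and compute $(V\inv)^T V\inv = (VV^T)^{-1} V V^T (VV^T)^{-1} = (VV^T)^{-1}$. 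Therefore $\norm{V\inv}^2 = \norm{(VV^T)^{-1}} = \norm{\adj(VV^T)}/\det(VV^T)$, and taking square roots yields exactly the claimed bound — in fact with equality, though stating it as an inequality suffices. I would double-check the step $\norm{B}^2 = \norm{B^TB}$ for a rectangular matrix $B$ (here $B = V\inv \in \Mat_{m\times k}$): this is the standard fact that $\norm{B}^2$ equals the largest eigenvalue of $B^TB$, which is $\norm{B^TB}$ since $B^TB$ is positive semidefinite.

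The main obstacle, such as it is, is resisting the temptation to use the crude submultiplicative bound and instead recognizing that one must pass through $(V\inv)^T V\inv = (VV^T)^{-1}$ to get the sharp constant; once that identity is in hand the rest is routine, relying only on the spectral characterization of the operator norm of a positive symmetric matrix and the cofactor formula for the inverse.
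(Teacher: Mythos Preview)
Your proof is correct and essentially matches the paper's approach: the lower bound comes from $V V\inv = I$ and submultiplicativity, and the upper bound from computing $(V\inv)^T V\inv = (V V^T)^{-1}$ together with the cofactor formula for the inverse. The paper carries out the upper-bound computation pointwise on a unit vector $x$ rather than via the operator identity $\norm{B}^2=\norm{B^T B}$, but this is only a cosmetic difference.
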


\begin{proof} The first inequality follows from the fact that $V\inv$ is a right inverse of $V$: $I = V \, V\inv$.

For the second, using the previous lemma, for any unit vector $x$
\begin{align*} 
\norm{V\inv x}^2 &= \norm{V^T ( V V^T)^{-1} x}^2\\
 &= x^T (V^T ( V V^T)^{-1})^T V^T (V V^T)^{-1} x \\
 &= x^T ( V V^T)^{-T} (V V^T) (V V^T)^{-1} x \\
 &= x^T ( V V^T)^{-1}  x = \frac{1}{\det (V V^T)}\, x^T \adj( V V^T)  x \\
 &\leq \frac{\norm{ \adj( V V^T) }}{\det (V V^T)} .
\end{align*}
\end{proof}

Given $g\in\Mat_m(\R)$ denote by $\rank{g}$, 
$\Ker{g}$ and  $\Range{g}$ respectively the rank, kernel and range of $g$. Let $\pi_g\in\Mat_m(\R)$ denote the  orthogonal
projection onto $\Range{g}$.

\begin{lemma}
\label{ABC rank lemma}
If $g_2, g_1, g_0\in\Mat_m(\R)$ are such that
 $\rank{g_2 g_1}=\rank{g_1}=\rank{g_1 g_0}=k$, then
\begin{enumerate}
\item[(a)] $\rank{g_2 g_1 g_0}=k$.
\item[(b)] $g_2 g_1 g_0$ maps $\Range{ g_0^T g_1^T}$ isomorphically onto
$\Range{g_2 g_1}$.

\end{enumerate}
\end{lemma}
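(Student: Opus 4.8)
The plan is to prove both parts by tracking ranks through the composition, using the hypotheses that $g_1$, $g_2 g_1$, and $g_1 g_0$ all have rank $k$, together with the rank-nullity theorem. First I would establish part (a). The key observation is that $\rank{g_1 g_0} = k = \rank{g_1}$ forces $g_0$ to map the full space onto a subspace that captures all of $\Range{g_1}$ after applying $g_1$; more precisely, $g_1\bigl(\Range{g_0}\bigr) = \Range{g_1 g_0}$ has dimension $k = \dim \Range{g_1}$, so $g_1\bigl(\Range{g_0}\bigr) = \Range{g_1}$, i.e. $\Range{g_0} + \Ker{g_1} = \R^m$ in the sense that $g_1$ restricted to $\Range{g_0}$ is surjective onto $\Range{g_1}$. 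Then $\Range{g_1 g_0} = \Range{g_1}$. Similarly, $\rank{g_2 g_1} = k = \rank{g_1}$ means $g_2$ is injective on $\Range{g_1}$ (its kernel meets $\Range{g_1}$ only in $0$, by rank-nullity applied to $g_2\vert_{\Range{g_1}}$). Combining: $g_2 g_1 g_0$ has range $g_2\bigl(g_1(\Range{g_0})\bigr) = g_2\bigl(\Range{g_1}\bigr) = \Range{g_2 g_1}$, which has dimension $k$, so $\rank{g_2 g_1 g_0} = k$. This simultaneously proves that the range of $g_2 g_1 g_0$ equals $\Range{g_2 g_1}$, which is the target space named in part (b).

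For part (b), I would use part (a)'s byproduct plus a dimension count on the source. The subspace $\Range{g_0^T g_1^T} = \Range{(g_1 g_0)^T}$ is the row space of $g_1 g_0$, equivalently $\Ker{g_1 g_0}^\perp$, hence has dimension $\rank{g_1 g_0} = k$. So $g_2 g_1 g_0$ restricted to this $k$-dimensional subspace maps into the $k$-dimensional space $\Range{g_2 g_1}$; to conclude it is an isomorphism it suffices to show injectivity on $\Range{(g_1g_0)^T}$. Suppose $v \in \Range{(g_1 g_0)^T}$ with $g_2 g_1 g_0 v = 0$. Since $\Range{(g_1g_0)^T} = \Ker{g_1 g_0}^\perp$, the only vector in $\Ker{g_1 g_0}$ lying in $\Range{(g_1 g_0)^T}$ is $0$; but I need to show $v \in \Ker{g_1 g_0}$. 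From $g_2 g_1 g_0 v = 0$ we get $g_1 g_0 v \in \Ker{g_2} \cap \Range{g_1}$, which is $\{0\}$ by the injectivity of $g_2$ on $\Range{g_1}$ established above. Hence $g_1 g_0 v = 0$, so $v \in \Ker{g_1 g_0} \cap \Range{(g_1g_0)^T} = \{0\}$. This gives injectivity, hence the isomorphism.

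The main obstacle, such as it is, is purely bookkeeping: keeping straight which hypothesis gives surjectivity of a restriction (the $g_1 g_0$ side, controlling the source) versus injectivity of a restriction (the $g_2 g_1$ side, controlling the target), and chaining these through the triple product without conflating $\Range{g_1}$ with $\Range{g_0}$ or with $\Ker{g_1}^\perp$. I would take care to phrase the two structural facts cleanly up front — namely ``$g_1\vert_{\Range{g_0}}\colon \Range{g_0}\to \Range{g_1}$ is surjective'' and ``$g_2\vert_{\Range{g_1}}$ is injective'' — since every assertion in both parts follows mechanically from those two statements plus $\dim \Range{(g_1g_0)^T} = k$. No deep idea is needed; the proof is a short exercise in linear algebra once these are isolated.
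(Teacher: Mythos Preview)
Your proof is correct. The paper takes a slightly different organizational route: it works with transposes, showing first that $\Range{g_0^T g_1^T}=\Range{g_0^T g_1^T g_2^T}$ (from $\Range{g_1^T}=\Range{g_1^T g_2^T}$), which immediately gives (a); then it applies the same conclusion to the transposed triplet $g_0^T,g_1^T,g_2^T$ to obtain $\Range{g_2 g_1}=\Range{g_2 g_1 g_0}$, and finishes (b) by invoking the first isomorphism theorem, since $g_2 g_1 g_0$ automatically maps $\Ker{g_2 g_1 g_0}^\perp=\Range{(g_2 g_1 g_0)^T}$ isomorphically onto $\Range{g_2 g_1 g_0}$. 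Your approach instead isolates the two structural facts ``$g_1$ restricted to $\Range{g_0}$ is onto $\Range{g_1}$'' and ``$g_2$ restricted to $\Range{g_1}$ is injective'' and verifies the isomorphism in (b) by an explicit injectivity check. The paper's symmetry trick is a touch slicker (one computation done twice), while your phrasing makes the mechanism---which hypothesis controls the source and which the target---more transparent; both are equally elementary and amount to the same linear algebra.
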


\begin{proof}
Because $\rank{g_2 g_1}=\rank{g_1}$ we have
$\Range{g_1^T g_2^T}=\Range{g_1^T}$. Hence
\begin{align} \nonumber
\Range{g_0^T g_1^T} &=g_0^T \Range{g_1^T}=g_0^T \Range{g_1^T g_2^T}\\
&=\Range{g_0^T g_1^T g_2^T} \label{rank id} ,
\end{align}
which implies (a). In fact
\begin{align*}
\rank{g_2 g_1 g_0}&=\rank{g_0^T g_1^T g_2^T} = \dim{\Range{g_0^T g_1^T g_2^T}}\\
&=\dim{\Range{g_0^T g_1^T}}
=\rank{g_1 g_0} = k .
\end{align*}
Conclusion~\eqref{rank id} applied to the triplet of matrices $g_0^T, g_1^T, g_2^T$ gives  
\begin{equation}
\label{rank id2}
\Range{g_2 g_1} = \Range{g_2 g_1 g_0} .
\end{equation} 
By the fundamental theorem on homomorphisms, $g_2 g_1 g_0$ induces an isomorphism between
$\Range{g_0^T g_1^T g_2^T}=\Ker{g_2 g_1 g_0}^\perp $ and $\Range{g_2 g_1 g_0}$. 
Therefore item (b) follows from~\eqref{rank id} and~\eqref{rank id2}.

\end{proof}


\subsection*{The rank of a linear cocycle.} Let $A \in \qpcmat{d}$ be an analytic quasi-periodic cocycle. 

\begin{definition}
We call  {\em geometric rank} of $A$ the number
$${\rm r} (A):=\max_{x\in\T^d}  \, \rank{A(x)} .$$
 We call {\em rank} of $A$  the limit
 $$ \rank{A}:= \lim_{n\to+\infty} {\rm r}(\An{n}).$$
When $\rank{A}=m$  the cocycle $A$ is said to  have maximal rank.
\end{definition}

\begin{remark} The limit above exists because the sequence of geometric ranks 
${\rm r} (\An{n})$  decreases.
Hence this sequence eventually stabilizes, i.e. it becomes constant.
\end{remark}

\begin{remark}  The reader should be mindful of the notational difference between $\rank{A}$, which represents the rank of the cocycle $A$,
and $\rank{A(x)}$, the rank of the matrix $A(x)$.
\end{remark}

\begin{remark} Because of analyticity,
$\{x\in\T^d\colon \rank{A(x)} = {\rm r} (A) \}$
is   open and has full measure. Hence, this is a generic set
in both topological and measure theoretical sense.
 \end{remark}

\begin{proposition} 
If ${\rm r}(\An{k}) = {\rm r}(\An{k+1})$
then $\rank{A}={\rm r}(\An{k})$.
\end{proposition}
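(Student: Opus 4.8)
The plan is to show that once the non-increasing, integer-valued sequence of geometric ranks ${\rm r}(\An{n})$ stabilizes for a single step it stays constant forever, which is exactly the assertion. Write $K := {\rm r}(\An{k}) = {\rm r}(\An{k+1})$. I would reduce the proof to the following one-step propagation claim: \emph{if} ${\rm r}(\An{n}) = {\rm r}(\An{n+1}) = K$ for some $n \ge k$, \emph{then} ${\rm r}(\An{n+2}) = K$. Granting this, an immediate induction on $n$ (with base case $n = k$ supplied by the hypothesis; note the conclusion also gives ${\rm r}(\An{n+1}) = {\rm r}(\An{n+2}) = K$, which feeds the next step) shows ${\rm r}(\An{n}) = K$ for all $n \ge k$, whence $\rank{A} = \lim_{n} {\rm r}(\An{n}) = K = {\rm r}(\An{k})$.

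To prove the propagation claim I would use the definition of the iterates to split, for any $x \in \T^d$,
\[
\An{n+2}(x) = A(T^{n+1}x)\,\An{n}(Tx)\,A(x) =: g_2\,g_1\,g_0 ,
\]
and apply Lemma~\ref{ABC rank lemma}(a) to the triple $(g_2, g_1, g_0)$ at a carefully chosen point. That lemma needs $\rank{g_2 g_1} = \rank{g_1} = \rank{g_1 g_0} = K$. Since $g_2 g_1 = \An{n+1}(Tx)$, $g_1 = \An{n}(Tx)$ and $g_1 g_0 = \An{n+1}(x)$, all three ranks are automatically $\le K$ by the inductive hypothesis; what remains is to find one $x$ at which all three equal $K$. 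By the analyticity remark applied to the analytic matrix functions $\An{n}$ and $\An{n+1}$ (together with the fact that $T$ preserves the Haar measure), each of the three sets $\{x \in \T^d : \rank{\An{n}(Tx)} = K\}$, $\{x \in \T^d : \rank{\An{n+1}(Tx)} = K\}$, $\{x \in \T^d : \rank{\An{n+1}(x)} = K\}$ is open and of full measure, hence their intersection is nonempty; choose $x_0$ in it. Lemma~\ref{ABC rank lemma}(a) then gives $\rank{\An{n+2}(x_0)} = K$, so ${\rm r}(\An{n+2}) \ge K$, while monotonicity of the geometric ranks gives ${\rm r}(\An{n+2}) \le {\rm r}(\An{n+1}) = K$; hence ${\rm r}(\An{n+2}) = K$.

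I do not expect a serious obstacle. The only steps requiring a little attention are: writing $\An{n+2}$ as a product of exactly three \emph{consecutive} blocks so that the hypotheses of Lemma~\ref{ABC rank lemma}(a) are precisely the statements that the geometric ranks at scales $n$ and $n+1$ are attained (here one uses $\An{n+2}(x)=\An{1}(T^{n+1}x)\,\An{n}(Tx)\,\An{1}(x)$, which is just the cocycle identity applied twice); and checking that the three ``maximal-rank'' loci are genuinely open and of full measure, so that a finite intersection is nonempty. The remaining ingredients — monotonicity of ${\rm r}(\An{n})$, the reduction to one-step propagation, and the final passage to the limit — are routine.
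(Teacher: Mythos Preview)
Your proposal is correct and follows essentially the same approach as the paper: the same three-block factorization $\An{n+2}(x) = A(T^{n+1}x)\,\An{n}(Tx)\,A(x)$, the same application of Lemma~\ref{ABC rank lemma}(a) at a generic (full-measure) phase, and the same induction on $n$. The only cosmetic differences are that you make the inductive structure more explicit and close with the monotonicity inequality, whereas the paper observes directly that the generic rank of $\An{k+2}$ equals that of $\An{k}$.
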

 
\begin{proof}
Given $x\in \T^d$ consider the triplet of matrices  $A(T^{k+1}x)$, $\An{k}(T x)$ and $A(x)$,
whose product is equal to $\An{k+2}(x)$.
If $x$ is generic the  matrices $\An{k}(x)$, $\An{k+1}(x)$, $\An{k}(T x)$ and $\An{k+1}(T x)$ have maximal rank.
Then by assumption
\begin{align*}
& \rank{A(T^{k+1}x) \An{k}(T x)}  = \rank{\An{k+1}(T x)} =  \rank{\An{k}(T x)} ,\\
& \rank{\An{k}(T x) A(x)} = \rank{\An{k+1}(x)} =  \rank{\An{k}(x)} .
\end{align*}
Hence by Lemma~\ref{ABC rank lemma},
$$ \rank{\An{k+2}(x)} =
\rank{A(T^{k+1}x) \An{k}(T x) A(x)} = \rank{\An{k}(x)} . $$
This proves that ${\rm r}(\An{k+2})={\rm r}(\An{k})$.
By induction ${\rm r}(\An{n})={\rm r}(\An{k})$ for all $n\geq k$.
\end{proof}

\begin{corollary} The  geometric rank ${\rm r}(\An{k})$  stabilizes after some order  $k\leq m$.
 In particular, $\An{m}$ has geometric rank equal to $\rank{A}$ and so does $\An{n}$ for any $n \ge m$.
\end{corollary}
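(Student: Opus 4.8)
The plan is to read this off from the preceding Proposition, whose hypothesis asks us to produce an index $k\le m$ with ${\rm r}(\An{k})={\rm r}(\An{k+1})$. Recall from the remarks above that $\bigl({\rm r}(\An{n})\bigr)_{n\ge 1}$ is a non-increasing sequence of non-negative integers bounded above by $m$, and that the moment two consecutive terms coincide, say ${\rm r}(\An{k})={\rm r}(\An{k+1})$, the preceding Proposition forces the whole tail to be constant and equal to $\rank{A}={\rm r}(\An{k})$. So the entire matter reduces to checking that this sequence cannot stay \emph{strictly} decreasing all the way along the indices $1,2,\ldots,m+1$.

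Suppose, toward a contradiction, that it does. Then ${\rm r}(\An{1})>{\rm r}(\An{2})>\cdots>{\rm r}(\An{m+1})\ge 0$ would be $m+1$ pairwise distinct non-negative integers, each at most ${\rm r}(\An{1})={\rm r}(A)\le m$; this is possible only if ${\rm r}(A)=m$. The plan is to rule that out by the elementary observation that maximal geometric rank propagates to every iterate: since $\det\bigl[\An{n}(x)\bigr]=\prod_{j=0}^{n-1}\det\bigl[A(T^{j}x)\bigr]$ and $\det[A(\cdot)]$ is a real analytic function that is not identically zero (for ${\rm r}(A)=m$ means $\det[A(x)]\neq 0$ at some $x$), the function $\det\bigl[\An{n}(\cdot)\bigr]$ is again real analytic and not identically zero, whence ${\rm r}(\An{n})=m$ for every $n\ge 1$, contradicting the assumed strict decrease. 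Equivalently, one may invoke the earlier remark that $\{x\in\T^d:\rank{A(x)}={\rm r}(A)\}$ has full measure, and intersect its $T^{-j}$-preimages for $j=0,\dots,n-1$ to obtain a full-measure set on which $\An{n}(x)$ is a product of invertible matrices.

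Hence the sequence $\bigl({\rm r}(\An{n})\bigr)$ is not strictly decreasing throughout $1,\ldots,m+1$, so by the dichotomy recalled above it has already stabilized by index $m$; in particular ${\rm r}(\An{m})={\rm r}(\An{m+1})$, and the preceding Proposition applied with $k=m$ gives $\rank{A}={\rm r}(\An{m})$, with ${\rm r}(\An{n})={\rm r}(\An{m})=\rank{A}$ for every $n\ge m$ — which is exactly the assertion. I do not anticipate a genuine difficulty in carrying this out; the only step that is not completely automatic is handling the corner case ${\rm r}(A)=m$, and it is dealt with by noting that the measure-preserving translation $T$ carries the full-measure set of maximal-rank phases onto itself, so maximal rank is preserved along the cocycle's orbits. (Had we assumed a priori that $\rank{A}\ge 1$, e.g. that $A$ is not nilpotent, the argument would be shorter still: the strictly decreasing phase of $\bigl({\rm r}(\An{n})\bigr)$ would then take values in $\{1,\dots,m\}$ and so would have to terminate by index $m$.)
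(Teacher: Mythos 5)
Your argument is correct and matches the natural reading of the paper's omitted proof: a pigeonhole count on the non-increasing integer-valued sequence $\bigl({\rm r}(\An{n})\bigr)_{n\ge 1}$, fed into the preceding Proposition. You also properly dispatch the one corner case, ${\rm r}(A)=m$, where the naive count alone would only give stabilization by index $m+1$, by observing that maximal geometric rank of $A$ forces ${\rm r}(\An{n})=m$ for every $n$ (via the full-measure set of phases where $A(x)$ is invertible, or equivalently via $\det[\An{n}]=\prod_{j}\det[A\circ T^{j}]\not\equiv 0$).
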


\begin{definition}
A cocycle $A\in \qpcmat{d}$ is said to be {\em nilpotent}
when for some $n\geq 1$, $\An{n}\equiv 0$.
\end{definition}

\begin{remark} \label{rmk zero rank}
The only cocycle with zero geometric rank is the constant zero cocycle. 
The cocycles with zero rank are exactly the nilpotent ones.
 \end{remark}

\begin{remark}
If $\rank{A}=k$ then by the analyticity of $A$ the set of points $x\in\T^d$ where $\rank{\An{m}(x)} = k$ 
is open and dense with full measure. 
 \end{remark}

The following elementary proposition  relates the geometric rank with singular values and exterior powers of a cocycle $A$.

\begin{proposition}
Given  $A\in \qpcmat{d}$ and $k\geq 1$ 
the following statements are equivalent:
\begin{enumerate}
\item $A$  has geometric rank $k$,
\item $k=\max\{ \, 1\leq j\leq m\,:\,
s_j(A(x))\neq 0 \; \text{for some }\, x\in\T^d\;\}$,
\item $\wedge_k A$  has geometric rank $1$.
\end{enumerate}
\end{proposition}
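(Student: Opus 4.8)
The plan is to prove the equivalence of the three statements by establishing the chain $(1)\Leftrightarrow(2)$ and $(1)\Leftrightarrow(3)$, both of which reduce to the elementary fact that the rank of a matrix $g\in\Mat_m(\R)$ equals the number of its nonzero singular values, together with the analyticity-driven observation (already recorded in the remarks above) that the set $\{x\in\T^d\colon \rank{A(x)}={\rm r}(A)\}$ is open, dense, and of full measure. So the ``max over $x$'' in the definition of geometric rank is attained on a generic set, and pointwise statements about $A(x)$ at a single generic point can be promoted to statements about the cocycle.

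First I would recall that for a fixed matrix $g\in\Mat_m(\R)$, one has $\rank{g}=\#\{j\colon s_j(g)\neq 0\}$, since the singular values are the square roots of the eigenvalues of $g^T g$ and $\rank{g}=\rank{g^T g}$. Taking the maximum over $x\in\T^d$ of both sides of $\rank{A(x)}=\#\{j\colon s_j(A(x))\neq 0\}$ gives
$$
{\rm r}(A)=\max_{x\in\T^d}\#\{1\le j\le m\colon s_j(A(x))\neq 0\}.
$$
To see that this maximum equals $\max\{1\le j\le m\colon s_j(A(x))\neq 0\text{ for some }x\in\T^d\}$ — call the latter $k_0$ — note that the singular values are ordered $s_1\ge s_2\ge\cdots\ge s_m\ge 0$, so for any fixed $x$ the indices $j$ with $s_j(A(x))\neq 0$ form an initial segment $\{1,\dots,\rank{A(x)}\}$; hence $\#\{j\colon s_j(A(x))\neq 0\}=\max\{j\colon s_j(A(x))\neq 0\}$ (with the convention that this is $0$ if $A(x)=0$), and taking the max over $x$ of this quantity is exactly $k_0$. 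This proves $(1)\Leftrightarrow(2)$.

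For $(1)\Leftrightarrow(3)$, I would use the standard identities relating exterior powers and singular values: for any $g\in\Mat_m(\R)$ and any $1\le k\le m$, the nonzero singular values of $\wedge_k g\in\Mat_{\binom{m}{k}}(\R)$ are exactly the products $s_{i_1}(g)\cdots s_{i_k}(g)$ over $1\le i_1<\cdots<i_k\le m$; in particular $\wedge_k g\neq 0$ iff $s_k(g)\neq 0$ iff $\rank{g}\ge k$, and $\rank{\wedge_k g}=1$ iff $s_k(g)\neq 0$ and $s_{k+1}(g)=0$ (the second product vanishing means no two-factor product $s_{i_1}\cdots s_{i_k}$ with a ``higher'' index survives), i.e. iff $\rank{g}=k$. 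Thus $\rank{(\wedge_k A)(x)}=\rank{\wedge_k(A(x))}=1$ precisely when $\rank{A(x)}=k$. Taking the maximum over $x$, and invoking the genericity remark so that the maximum of $\rank{A(x)}$ is attained on the same full-measure set where the maximum of $\rank{(\wedge_k A)(x)}$ is attained, we get ${\rm r}(\wedge_k A)=1$ iff ${\rm r}(A)=k$. (One must only note $\wedge_k A$ is again an analytic quasi-periodic cocycle, its entries being polynomials in those of $A$, so the definition of geometric rank applies to it; and that ${\rm r}(\wedge_k A)\le 1$ forces it to be exactly $1$ unless $\wedge_k A\equiv 0$, which is the case $k>{\rm r}(A)$.) This completes the proof.

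I do not expect any genuine obstacle here — the statement is elementary linear algebra packaged through the singular value / exterior power dictionary. The only point requiring a little care is bookkeeping the quantifiers: ``geometric rank'' is a max over $x$, so I must consistently pass between a pointwise statement valid at a single generic $x$ and the cocycle-level statement, which is exactly where the openness/full-measure remark on $\{\rank{A(x)}={\rm r}(A)\}$ earns its keep. A secondary minor point is handling the degenerate cases ($A\equiv 0$, or $k>{\rm r}(A)$ so that $\wedge_k A\equiv 0$), where one should recall from Remark~\ref{rmk zero rank} that the only cocycle of zero geometric rank is the constant zero cocycle.
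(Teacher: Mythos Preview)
Your proof is correct and follows essentially the same route as the paper: reduce to the pointwise linear-algebra fact that $\rank{g}=\max\{j\colon s_j(g)>0\}$ for $(1)\Leftrightarrow(2)$, and use the singular-value formula for $\wedge_k g$ (namely $s_1(\wedge_k g)=s_1(g)\cdots s_k(g)$ and $s_2(\wedge_k g)=s_1(g)\cdots s_{k-1}(g)\,s_{k+1}(g)$) to get $\rank{g}=k\Leftrightarrow\rank{\wedge_k g}=1$ for $(1)\Leftrightarrow(3)$. One small remark: your invocation of the genericity of $\{x\colon \rank{A(x)}={\rm r}(A)\}$ in passing from the pointwise equivalence to the cocycle-level one is valid but not actually needed---since $l\mapsto \binom{l}{k}=\rank{\wedge_k g}$ (for $\rank{g}=l$) is non-decreasing, the max over $x$ commutes with it directly, which is how the paper's two-line argument gets by without mentioning analyticity at all.
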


\begin{proof}
Given $g\in\Mat_m(\R)$,
$\rank{g}=\max\{ 1\leq j \leq m\colon s_j(g)>0 \}$.
This implies that (1)$\Leftrightarrow$(2).

On the other hand,
\begin{align*}
s_1(\wedge_k g) &= s_1(g)\, s_2(g)\,\cdots\,
s_k(g),\\
s_2(\wedge_k g) &= s_1(g) \, s_2(g) \,\cdots\,
s_{k-1}(g)\,s_{k+1}(g) .
\end{align*}
Hence, $s_1(g)\geq \ldots \geq s_k(g)>s_{k+1}(g)=0$ if and only if
$s_1(\wedge_k g)>s_2(\wedge_k g)=0$.
This shows that (1)$\Leftrightarrow$(3).
\end{proof}

 
\subsection*{Semi-conjugation to a maximal rank cocycle.}
Consider a linear cocycle  $A\in\qpcmat{d}$  with
rank $k\ge1$ and a function $V_A:\T^d\to \Mat_{k\times m}(\R)$ 
consisting of $k$ rows of $\An{m}$  chosen so that
 $$ \Omega_A:=\{ x\in\T^d\colon \,
\rank{V_A(x)} = k \,  \} $$
is an open set with  full measure in $\T^d$.
Note that  
\begin{equation}
\label{OmegaA:def}
\rank{V_A(x)} = k\quad \Leftrightarrow\quad
\det(V_A(x)\, V_A(x)^T)\neq 0 .
\end{equation} 
Let  $V_A^+(x)$ be the pseudo-inverse of $V_A(x)$,
which is well-defined for $x\in\Omega_A$. By Lemma~\ref{pseudo-inverse:char}, for every $x\in\Omega_A$,
\begin{equation}
\label{VA+}
 V_A^+(x) = V_A(x)^T\, \left( V_A(x)\, V_A(x)^{T}\right)^{-1} .
\end{equation}

Define $W_A(x)=\Range{ \An{m}(x)^T}=\Range{V_A(x)^T} =\Range{V_A^+(x)}$  
and  $P_A(x)$  as the orthogonal projection onto $W_A(x)$,
i.e., $P_A(x)= \pi_{\An{m}(x)^T }$.
By definition, for all $x\in\Omega_A$,
\begin{align}
 \label{PA=(VA+)VA}
& P_A(x) = V_A^+(x)\, V_A(x) , \\
\label{Rm=WA oplus Ker}
 & \R^m = W_A(x)\oplus^ \perp \Ker{\An{m}(x)} ,\\
 \label{APA=A}
 & \An{n}(x)\,P_A(x) = \An{n}(x)\quad \text{ for all }\;  n\geq m .
\end{align}
For each $x\in\T^d$ let $\delta_{A,i}(x)$ be the $i$-th 
leading principal minor of $V_A (x)\, V_A (x)^ T$, i.e.,  the sub-determinant corresponding to the first $i$ rows and columns of $V_A (x)\, V_A (x)^ T$,
and then set
\begin{align}\label{gA:def}
g_A(x) &:=  \prod_{j=1}^k \delta_{A,j}(x)\;, \\
\label{h:def}
h_A(x) &:=  \delta_{A,k}(x)\,g_A(\transl x)^3\\
\label{Atilde:def}
\widetilde{A}(x) &:= h_A(x) \,P_A(\transl\,x)\, A(x) \;.
\end{align}

Given $n \ge m$, since
$\Range{\An{n} (x)^T}\subseteq \Range{\An{m} (x)^T}$ 
and   the matrices $\An{n} (x)$ and $\An{m} (x)$
have the same rank  for all $x \in  \cap_{n\ge 0} T^{-n} \Omega_A$ it follows that for a.e. phase $x\in\T^d$
\begin{equation}
\label{WA:RAT:KerAperp}
W_A (x) = \Range{\An{n} (x)^T} = \Ker{\An{n}(x)}^\perp .
\end{equation}

Notice  that $V_A$, $g_{A}$  and $h_A$ are analytic functions over the same analytic domain $\strip_r^d$ as $A$.

The functions $g_{A}$  and $h_A$  are not identically zero. Indeed, by Sylvester's criterion for positive definiteness and the fact that $V_A$  has rank $k\geq 1$, we have
 $\delta_{A,i}(x)>0$ for all $x\in\Omega_A$.
 This implies that $g_A(x)>0$ and $h_A(x)>0$ for all
 $x\in \Omega_A\cup T^{-1}\Omega_A$.

 Finally the following invariance relation holds.
\begin{proposition}
\label{prop Atil:WA->WAT}
For all $x\in \Omega_A\cap T^{-1}\Omega_A$,
\begin{equation}\label{Atilde:inv}
\widetilde{A}(x)\,W_A(x)=W_A(\transl x)\;.
\end{equation}
\end{proposition}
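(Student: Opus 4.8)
The plan is to unpack the definition of $\widetilde{A}(x) = h_A(x)\,P_A(\transl x)\,A(x)$ and show that it carries $W_A(x)$ onto $W_A(\transl x)$. Since $h_A(x) > 0$ for $x \in \Omega_A \cup T^{-1}\Omega_A$ (as recorded just above the statement), the scalar factor $h_A(x)$ is irrelevant for the image of a subspace, so it suffices to prove that $P_A(\transl x)\, A(x)\, W_A(x) = W_A(\transl x)$ for $x \in \Omega_A \cap T^{-1}\Omega_A$. The first step is to recall that $W_A(x) = \Range{\An{m}(x)^T} = \Ker{\An{m}(x)}^\perp$ (which holds for a.e. $x$ by \eqref{WA:RAT:KerAperp}, but the relevant algebraic identities $W_A(x) = \Range{\An{m}(x)^T}$ and $\R^m = W_A(x) \oplus^\perp \Ker{\An{m}(x)}$ are in force for every $x \in \Omega_A$ by definition and \eqref{Rm=WA oplus Ker}).

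Next I would apply Lemma~\ref{ABC rank lemma} to a well-chosen triple of matrices. The natural choice is $g_2 = \An{m}(\transl x)$, $g_1 = A(\transl^m x)$ (or rather the block that advances the cocycle by one step), $g_0 = \An{m}(x)$, arranged so that $g_2 g_1 g_0$ is a power of the cocycle; more precisely, one uses that $\An{m}(\transl x)\, A(x) = \An{m+1}(x)$ has rank $k$, that $\An{m}(\transl x)$ has rank $k$, and that $A(x)\,\An{m}(x)^T\cdots$ — here the bookkeeping of which matrices multiply on which side needs care. The cleanest route: since $\rank{A} = k$, for generic $x$ the matrices $\An{m}(x)$, $\An{m+1}(x)$, $\An{m}(\transl x)$ all have rank $k$; then $A(x)$ restricted appropriately maps $W_A(x) = \Ker{\An{m}(x)}^\perp$ isomorphically into $\R^m$, and composing with $P_A(\transl x)$ — which is the orthogonal projection onto $W_A(\transl x) = \Ker{\An{m}(\transl x)}^\perp$ — lands surjectively on $W_A(\transl x)$ because $\An{m}(\transl x)\,A(x) = \An{m+1}(x)$ has the same rank $k$ as $\An{m}(\transl x)$, forcing $A(x)\,W_A(x) + \Ker{\An{m}(\transl x)} = \R^m$, equivalently $P_A(\transl x)$ is injective on (and hence an isomorphism from) a $k$-dimensional subspace of $A(x)\,W_A(x)$ onto $W_A(\transl x)$. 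A dimension count ($\dim W_A(x) = \dim W_A(\transl x) = k$, and the composed map is injective) then upgrades this to equality.

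Concretely, the injectivity of $v \mapsto P_A(\transl x)\, A(x)\, v$ on $W_A(x)$ is the heart of the matter: if $A(x) v \in \Ker{\An{m}(\transl x)}$ for some $v \in W_A(x)$, then $\An{m}(\transl x)\, A(x)\, v = \An{m+1}(x)\, v = 0$, so $v \in \Ker{\An{m+1}(x)} = \Ker{\An{m}(x)}$ (the kernels of $\An{m}$ and $\An{m+1}$ coincide at generic $x$ because they have equal rank and $\Ker{\An{m}} \subseteq \Ker{\An{m+1}}$), hence $v \in W_A(x) \cap \Ker{\An{m}(x)} = \{0\}$ by \eqref{Rm=WA oplus Ker}. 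Thus the map is an injective linear map between two $k$-dimensional spaces, with image contained in $W_A(\transl x)$; it is therefore an isomorphism onto $W_A(\transl x)$, which is exactly \eqref{Atilde:inv}.

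The main obstacle I anticipate is not the algebra itself but ensuring the rank hypotheses hold for all $x \in \Omega_A \cap \transl^{-1}\Omega_A$ rather than merely a.e.\ $x$ — one needs $\rank{\An{m+1}(x)} = k$ there too, which should follow from $\rank{A} = k$ together with the definition of $\Omega_A$ (so that $V_A(x)$ and $V_A(\transl x)$, being $k$ rows of $\An{m}(x)$ resp.\ $\An{m}(\transl x)$, have full rank $k$), plus Lemma~\ref{ABC rank lemma}(a) applied to factor $\An{m+1}(x)$ appropriately. If the statement is only needed a.e.\ (which is likely sufficient for the semi-conjugacy construction), this point evaporates and one invokes \eqref{WA:RAT:KerAperp} directly. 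I would state the proof for a.e.\ $x$ using \eqref{WA:RAT:KerAperp}, \eqref{Rm=WA oplus Ker}, and the kernel-stabilization observation, keeping it short.
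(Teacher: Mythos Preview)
Your argument is correct and takes a genuinely different (and somewhat more elementary) route than the paper. You reduce to the injectivity of $v \mapsto P_A(\transl x)\,A(x)\,v$ on $W_A(x)$, prove it via the kernel stabilization $\Ker{\An{m}(x)} = \Ker{\An{m+1}(x)}$ (which follows from $\rank{\An{m}(x)}=\rank{\An{m+1}(x)}=k$ together with the obvious inclusion), and then close with a dimension count. The paper instead applies Lemma~\ref{ABC rank lemma}(b) to the triple $g_2=A(T^{m+1}x)$, $g_1=\An{m}(Tx)$, $g_0=A(x)$: it shows that $\An{m+2}(x)$ carries $W_A(x)$ isomorphically onto $\Range{\An{m+1}(Tx)}$, that $\An{m+1}(Tx)$ carries $W_A(Tx)$ isomorphically onto the same range, and then reads off from the factorization $\An{m+2}(x)=\An{m+1}(Tx)\,P_A(Tx)\,A(x)$ that $P_A(Tx)\,A(x)$ must be an isomorphism $W_A(x)\to W_A(Tx)$. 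Your approach avoids the detour through $\An{m+2}$ and the explicit use of Lemma~\ref{ABC rank lemma}(b), trading it for a one-line kernel computation; the paper's approach has the virtue of reusing a lemma already on hand. The subtlety you flag about needing $\rank{\An{m+1}(x)}=k$ (so that the statement holds for all $x\in\Omega_A\cap T^{-1}\Omega_A$ rather than merely a.e.) is present in both arguments: the paper's proof equally requires $\rank{\An{m+1}(x)}=\rank{\An{m+1}(Tx)}=k$ for Lemma~\ref{ABC rank lemma} to apply, and does not justify this beyond the generic-rank assertion. As you note, the a.e.\ version suffices for everything downstream.
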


\begin{proof}
Given $x\in\Omega_A\cap T^{-1}\Omega_A$,
consider the  matrices
$A(T^{m+1} x)$, $\An{m}(T x)$ and $A(x)$, whose product
$\An{m+2}(x)$ has rank $k=\rank{A}$.
By~\eqref{APA=A} above we have
\begin{equation}
\label{APA}
\An{m+2}(x)= \An{m+1}(T x) P_A(T x) A(x) .
\end{equation}
By item (b) of Lemma~\ref{ABC rank lemma} applied to the previous triplet of matrices,
$\An{m+2}(x)$ maps $W_A(x)$ isomorphically
onto $\Range{\An{m+1}(T x)}$ and, similarly, the map
$\An{m+1}(T x)$ takes $W_A(T x)$ isomorphically
onto $\Range{\An{m+1}(T x)}$. Therefore the factorization~\eqref{APA}  
implies that the linear map $P_A(T x) A(x)$ sends $W_A(x)$ isomorphically onto $W_A(T x)$. Finally, since $h_A$ does not vanish  over $\Omega_A\cap T^{-1}\Omega_A$, $\widetilde A(x)$ induces an isomorphism between $W_A(x)$ and $W_A(T x)$.
\end{proof}

\begin{definition} 
\label{RA:def}
 The reduced cocycle of $A$
is the natural extension to $\T^d$ of the mapping
$R_A:\Omega_A  \to \Mat_k(\R)$, 
\begin{equation}\label{RA:def:eq}
R_A(x) :=V_A(T x)\,\widetilde{A}(x)\, V_A^+(x)\;.
\end{equation}
\end{definition}

The next lemma ensures that $R_A$  extends analytically to $\strip_r^d$.

\begin{lemma} Assuming $\rank{A}=k\geq 1$,  the following functions have analytic extensions to $\strip_r^d$:
\begin{enumerate}
\item $g_A, h_A:\T^d\to\R$,
\item $(g_A)^ 3\,P_A:\T^d\to\Mat_m(\R)$,
\item $\widetilde{A}:\T^d\to\Mat_m(\R)$,
\item $R_A:\T^d\to\Mat_k(\R)$.
\end{enumerate}
Moreover, $R_A$ has maximal rank, i.e.,
$\det(R_A) \not \equiv 0$.
\end{lemma}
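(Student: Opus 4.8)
The statement asserts analyticity of $g_A$, $h_A$, $(g_A)^3 P_A$, $\widetilde A$ and $R_A$, plus the maximal rank of $R_A$. The guiding principle is that all these objects are built from $A$ and its iterate $\An m$ by operations that are polynomial except for a single matrix inversion of $V_A V_A^T$, and that inversion is always paired with an appropriate power of $\det(V_A V_A^T)$ (equivalently, with powers of $g_A$) which cancels the denominator. So the plan is: (1) record that $V_A$, being made of rows of $\An m (x) = A(T^{m-1}x)\cdots A(x)$, extends holomorphically to $\strip_r^d$, hence so does $V_A V_A^T$ and each leading principal minor $\delta_{A,j}$; therefore $g_A$ and $h_A$, being products of these minors (and a translate), are holomorphic on $\strip_r^d$ — this gives (1). (2) For (2), use Cramer's rule: $(V_A(x) V_A(x)^T)^{-1} = \det(V_A V_A^T)^{-1}\,\adj(V_A V_A^T)$, and by Lemma~\ref{pseudo-inverse:char}, on $\Omega_A$ we have $P_A = V_A^+ V_A = V_A^T (V_A V_A^T)^{-1} V_A$, so $\det(V_A V_A^T)\, P_A = V_A^T\,\adj(V_A V_A^T)\, V_A$ is a polynomial (hence entire) matrix function of the entries of $\An m$. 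Since $\delta_{A,k} = \det(V_A V_A^T)$ divides $g_A$ and $g_A = \prod_{j=1}^k \delta_{A,j}$, the product $(g_A)^3 P_A$ contains at least $\det(V_A V_A^T)$ (indeed three factors of it) times $P_A$, so multiplying the polynomial identity above by $(g_A / \delta_{A,k})^3 \cdot \delta_{A,k}^2$ exhibits $(g_A)^3 P_A$ as a polynomial in holomorphic functions; thus it extends holomorphically. The only subtlety is checking that the resulting holomorphic extension still equals the orthogonal projection on the real torus, which follows because two holomorphic functions agreeing on the open full-measure set $\Omega_A$ agree on the connected domain $\strip_r^d$.

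\textbf{Steps for $\widetilde A$ and $R_A$.} For (3), recall $\widetilde A(x) = h_A(x)\, P_A(Tx)\, A(x)$ with $h_A(x) = \delta_{A,k}(x)\, g_A(Tx)^3$. Since $g_A(Tx)^3 P_A(Tx)$ extends holomorphically by (2) (precomposition with the holomorphic translation $Tx = x+\om$ preserves analyticity), and $\delta_{A,k}$ and $A$ are holomorphic, $\widetilde A$ is a product of holomorphic matrix functions, hence holomorphic on $\strip_r^d$. For (4), write out $R_A(x) = V_A(Tx)\, \widetilde A(x)\, V_A^+(x)$ and expand $V_A^+(x) = V_A(x)^T (V_A(x)V_A(x)^T)^{-1} = \delta_{A,k}(x)^{-1}\, V_A(x)^T \adj(V_A(x)V_A(x)^T)$. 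The apparent denominator $\delta_{A,k}(x)$ must be absorbed; this is exactly why $h_A$ carries the factor $\delta_{A,k}(x)$: the product $h_A(x)\, V_A^+(x)$ is then $g_A(Tx)^3\, V_A(x)^T \adj(V_A(x) V_A(x)^T)$, which is holomorphic. So $R_A(x) = V_A(Tx)\, [g_A(Tx)^3 P_A(Tx)]\, A(x)\, V_A(x)^T\, \adj(V_A(x)V_A(x)^T)$ after regrouping, a product of holomorphic functions, and again the extension agrees with the torus definition on the connected domain by the identity theorem.

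\textbf{Maximal rank of $R_A$.} For the final assertion, fix a generic $x \in \Omega_A \cap T^{-1}\Omega_A$ (a full-measure open set, nonempty since $\T^d \subset \strip_r^d$). By Proposition~\ref{prop Atil:WA->WAT}, $\widetilde A(x)$ maps $W_A(x)$ isomorphically onto $W_A(Tx)$. Since $V_A(x)^+ \colon \R^k \to W_A(x)$ is an isomorphism (as $V_A(x)$ has rank $k$) and $V_A(Tx)\colon W_A(Tx) \to \R^k$ is also an isomorphism (because $W_A(Tx) = \Ker{\An m(Tx)}^\perp = \Ker{V_A(Tx)}^\perp$ by the definition of $W_A$ via rows of $\An m$ and \eqref{WA:RAT:KerAperp}, so $V_A(Tx)$ restricted to $W_A(Tx)$ is injective), the composition $R_A(x) = V_A(Tx)\, \widetilde A(x)\, V_A^+(x) \colon \R^k \to \R^k$ is an isomorphism, i.e.\ $\det R_A(x) \neq 0$. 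Hence $\det R_A \not\equiv 0$.

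\textbf{Expected main obstacle.} The routine part is the bookkeeping of which power of $\delta_{A,k}$ (and of the other minors) is needed to clear denominators; the definitions of $g_A$ and $h_A$ are rigged precisely so that this works, so one mainly has to verify the exponents match. The one genuine point requiring care is the passage from "holomorphic on $\strip_r^d$ and equal to the stated object on $\Omega_A$" to "equal to the stated (a priori only measurably or continuously defined) extension on all of $\T^d$" — this needs $\strip_r^d$ to be connected and $\Omega_A$ to have an accumulation point there, which is clear since $\Omega_A$ is open and dense in $\T^d \subset \strip_r^d$; invoking the identity theorem for several complex variables handles it. The maximal-rank claim is then an immediate corollary of Proposition~\ref{prop Atil:WA->WAT} together with the isomorphism properties of $V_A^+(x)$ and $V_A(Tx)|_{W_A(Tx)}$, so it poses no real difficulty.
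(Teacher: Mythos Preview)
Your proof is correct. The approach for items (1), (3), (4) and the maximal rank of $R_A$ essentially matches the paper's; for item (2), however, you take a genuinely different and more elementary route.

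The paper proves that $(g_A)^3 P_A$ is analytic by carrying out the Gram--Schmidt orthogonalization of the rows of $V_A$ (equivalently, a QR-decomposition of $V_A^T$), tracking inductively which products of the resulting orthogonal vectors $\kappa_j$ and squared norms $\|\kappa_j\|^2$ are polynomial in the entries of $V_A$, identifying $\delta_{A,i}(x)=\|\kappa_1(x)\|^2\cdots\|\kappa_i(x)\|^2$, and finally writing $P_A = K^T D^{-2} K$ to conclude. Your argument bypasses all of this: from $P_A = V_A^+ V_A = V_A^T (V_A V_A^T)^{-1} V_A$ and Cramer's rule you obtain directly that $\delta_{A,k} P_A = V_A^T \adj(V_A V_A^T)\, V_A$ is polynomial in the entries of $V_A$, hence holomorphic, and then observe that $(g_A)^3/\delta_{A,k}$ is a product of leading minors and thus holomorphic as well. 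This is shorter and in fact proves the sharper statement that already $\delta_{A,k} P_A$ extends holomorphically. The paper's QR computation yields finer intermediate information (analyticity of $g_A K$ and $g_A D^{-2}$) that is not used elsewhere, so nothing is lost by your shortcut. Your explicit invocation of the identity theorem to pass from $\Omega_A$ to $\strip_r^d$ is also a point the paper leaves implicit.
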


\begin{proof}
Clearly $\delta_{A,i}$, $g_A$, $h_A$ and $V_A$  are  analytic over $\strip_r^d$. 

Consider the QR-decomposition for $V_A(x)^T$. Transposing it we get
$V_A(x)=U(x)\,K(x)$, where
$U(x)\in \Mat_k(\R)$ is a lower triangular matrix with $1$s along the diagonal,
and $K(x)\in \Mat_{k\times m}(\R)$ is orthogonal,
in the sense that the rows  $\kappa_1(x),\ldots, \kappa_k(x)$ of $K(x)$  are mutually orthogonal. In fact
 $K(x)  K(x)^T$ is the square of the following positive diagonal matrix $D(x)=\diag(\norm{\kappa_1(x)},\ldots, \norm{\kappa_k(x)})$.
$K(x)$ is obtained applying the Gram-Schmidt orthogonalization process to the rows of $V_A(x)$.
In each step the row $v_j(x)$ of $V_A(x)$ gives rise to the row $\kappa_j(x)$ of 
the orthogonal matrix $K(x)$
$$ \kappa_j(x):= v_j(x) - \sum_{i<j} \frac{\langle v_j(x), \kappa_i(x)\rangle}{\| \kappa_i(x)\|^ 2}\, k_i(x) \;.$$
Inductively it follows that for each $j=1,\ldots, k$,
$$\norm{\kappa_1(x)}^2\ldots \norm{\kappa_{j-1}(x)}^2 \kappa_{j}(x)\quad \text{ and }
\quad \norm{\kappa_1(x)}^2\ldots \norm{\kappa_{j}(x)}^2 $$
are polynomial functions of the components of the vectors  $v_1(x),\ldots, v_j(x)$, and in particular they are analytic functions on $\strip_r^d$.
Since $\det U(x)=1$,
\begin{align*}
\delta_{A,k}(x) &=  \det\left[ V_A(x)\, V_A(x)^T \right] = 
\det \left[ U(x) K(x) K(x)^T U(x)^ T \right]\\
&= \det \left[  K(x)  K(x)^ T \right] =  \det\left[ D(x)^2\right] =\prod_{j=1}^ k \norm{\kappa_j(x)}^{2}\;.
\end{align*}

Denoting by $\submatr{M}{i}$ the sub-matrix of $M\in\Mat_k(\R)$ having order $i$ and 
both indices in $\{1,\ldots, i\}$, since $U(x)$ is a lower triangular matrix, we have
$$ \submatr{U(x)\,M\,U(x)^T}{i} = 
\submatr{U(x)}{i}\,
\submatr{M}{i}\,
\submatr{U(x)}{i}^T .$$
Hence,  as above we obtain
$$\delta_{A,i}(x) = 
\det \submatr{ U(x) K(x) K(x)^T U(x)^ T }{i}
 = \norm{\kappa_1(x)}^{2}\ldots \norm{\kappa_i(x)}^{2}.$$
Thus  $g_A\,D^{-2}$ and $g_A\,K$   have analytic extensions to $\strip_r^d$.
The rows of $D(x)^{-1} K(x)$ form an orthonormal basis of $W_A(x)$,
and the orthogonal projection onto $W_A(x)$ is given by
\begin{align*}
P_A(x) &= (D(x)^{-1} K(x))^T (D(x)^{-1} K(x))\\
& = K(x)^ T D(x)^ {-2} K(x)\;.
\end{align*}
It follows that  $(g_A)^3\,P_A$  has an analytic extension to $\strip_r^d$. Thus, 
also  $(\delta_{A,k})^{-1} \widetilde{A} = (g_A\circ \transl)^3\,(P_A\circ \transl)\, A$ has an analytic extension to $\strip_r^d$.
From~\eqref{RA:def} and~\eqref{VA+} we obtain
\begin{align*}
R_A(x)&= V_A(\transl x)\,\widetilde{A}(x)\,V_A(x)^T\, \left(\, V_A(x)\, V_A(x)^T\, \right)^{-1}\\
&= \frac{1}{\det \left[ V_A(x)\, V_A(x)^T\, \right] }\, V_A(\transl x)\,\widetilde{A}(x)\,V_A(x)^T\, {\rm adj}\left( V_A(x)\,V_A(x)^T\, \right) \\
&=  V_A(\transl x)\,\left((\delta_{A,k})^{-1}(x) \widetilde{A}(x)\right)\,V_A(x)^T\, {\rm adj}\left( V_A(x)\, V_A(x)^T\, \right) .
\end{align*}
This formula proves that $R_A$ has an analytic extension to $\strip_r^d$.

Finally, in Proposition~\ref{prop Atil:WA->WAT}
we have seen that $\widetilde{A} (x)$ induces an isomorphism between
$W_A(x)$ and $W_A(T x)$, for all $x\in\Omega_A\cap T^{-1}\Omega_A$.  But because $R_A (x) = V_A (T x)  \, \widetilde{A} (x) \, V_A\inv (x)$ for all $x \in \Omega_A$, by the definitions of $V_A (T x)$ and $V_A\inv (x)$, we conclude that $R_A (x)$ induces an automorphism on $\R^k$.
Hence, for all these phases  $\det R_A(x)\neq 0$.

\end{proof}

Let $h_{A}^{(n)}(x):=  \displaystyle \prod_{i=0}^{n-1} h_A(T^i  x)$ be the $n$-th iterate of the one-dimensional cocycle $h_A \in \analyticf{d}$. As mentioned earlier, $h_A \not\equiv 0$. The reduced cocycle $R_A \in \qpcmatk{d}$ was shown above to be non-identically singular. The next lemma relates the iterates of the identically singular cocycle $A$ to those of the non-identically singular cocycles $R_A$ and $h_A$.

\begin{lemma}\label{semi-conjugation} 
Given  
$A\in C^ \omega_r(\T^d,\Mat_m(\R))$ with rank $k\geq 1$, 
the following   relations hold for all iterates $n\geq m$:
\begin{align}\label{RV=VAtil}
& R_A^{(n)}(x)\,V_A(x)  = V_A(T^n x)\, \widetilde{A}^{(n)}(x)\\
\label{Bn}
& \widetilde{A}^{(n)}(x) = h_{A}^{(n-m)}(x)\,
\widetilde{A}^{(m)}(T^{n-m} x) \, \An{n-m}(x) \\
\label{An+1}
&  \An{n}(x)  =  	h_{A}^{(n-m)}(x)^{-1} \, \An{m}(T^{n-m}x) \, \widetilde{A}^{(n-m)}(x)\;.
\end{align}
\end{lemma}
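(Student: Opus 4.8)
The plan is to prove the three identities \eqref{RV=VAtil}, \eqref{Bn}, \eqref{An+1} by induction on $n \ge m$, using the definitions \eqref{Atilde:def}, \eqref{RA:def:eq} together with the algebraic facts established above: the invariance relation \eqref{Atilde:inv}, the absorption property \eqref{APA=A}, and the formula \eqref{PA=(VA+)VA}. Since all three are analytic identities, it suffices to verify them on the full-measure open set $\bigcap_{j\ge 0} T^{-j}\Omega_A$ where $V_A$, $V_A^+$, $P_A$ all behave well, and then extend by analyticity.

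\smallskip

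First I would establish \eqref{An+1} (equivalently its rearrangement \eqref{Bn}), which is really the heart of the matter. Unwinding the definition $\widetilde{A}(x) = h_A(x)\,P_A(Tx)\,A(x)$ and iterating, one gets
$$
\widetilde{A}^{(n)}(x) = \Big(\prod_{i=0}^{n-1} h_A(T^i x)\Big)\, P_A(T^n x)\, A(T^{n-1}x)\, P_A(T^{n-1}x)\cdots P_A(Tx)\,A(x).
$$
The key point is that for $n \ge m$ the interior projections $P_A(T^j x)$ with $m \le j \le n-1$ can be removed: by \eqref{APA=A}, $A^{(l)}(y)\,P_A(y) = A^{(l)}(y)$ for all $l \ge m$, so reading the product from the right, once at least $m$ factors $A$ have accumulated, each subsequent $P_A$ acts on a range it fixes. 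This collapses the tail of the product into $\widetilde{A}^{(m)}(T^{n-m}x)\cdot A^{(n-m)}(x)$ up to the scalar factor $h_A^{(n-m)}(x)$, which is precisely \eqref{Bn}; solving for $A^{(n)}(x)$ and using that $h_A^{(n-m)}$ is a nonvanishing scalar on the relevant set gives \eqref{An+1}.

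\smallskip

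Next I would prove \eqref{RV=VAtil} by induction. The base case $n=m$ (or rather the one-step relation) is \eqref{RA:def:eq} combined with \eqref{PA=(VA+)VA}: $R_A(x)V_A(x) = V_A(Tx)\widetilde{A}(x)V_A^+(x)V_A(x) = V_A(Tx)\widetilde{A}(x)P_A(x)$, and one checks $\widetilde{A}(x)P_A(x) = \widetilde{A}(x)$ since $P_A(x)$ projects onto $W_A(x) = \Range{A^{(m)}(x)^T}$ which contains the image of $P_A(x)$ trivially — more carefully, one uses that $\widetilde{A}(x)$ factors through $P_A(Tx)A(x)$ which, restricted to $W_A(x)$, is where the action lives, and \eqref{Rm=WA oplus Ker} shows the complement is killed. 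For the inductive step, writing $R_A^{(n+1)}(x) = R_A(T^n x)\,R_A^{(n)}(x)$ and applying $V_A(x)$ on the right, the inductive hypothesis turns $R_A^{(n)}(x)V_A(x)$ into $V_A(T^n x)\widetilde{A}^{(n)}(x)$; then $R_A(T^n x)V_A(T^n x) = V_A(T^{n+1}x)\widetilde{A}(T^n x)$ from the one-step relation, so $R_A^{(n+1)}(x)V_A(x) = V_A(T^{n+1}x)\widetilde{A}(T^n x)\widetilde{A}^{(n)}(x) = V_A(T^{n+1}x)\widetilde{A}^{(n+1)}(x)$, closing the induction.

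\smallskip

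The main obstacle, I expect, is the bookkeeping around the projections $P_A(T^j x)$ in the derivation of \eqref{An+1}: one must be careful that \eqref{APA=A} only holds for iterates of length $\ge m$, so the cancellation of interior projections is valid only from the $m$-th factor onward, which is exactly why the reduction splits as $\widetilde{A}^{(m)}(T^{n-m}x)\cdot A^{(n-m)}(x)$ with the break point at $n-m$. A secondary subtlety is making sure that all manipulations — in particular dividing by $h_A^{(n-m)}$ — are carried out on the full-measure set $\bigcap_{j} T^{-j}\Omega_A$ where $h_A$, $g_A$ are strictly positive and $V_A^+$, $P_A$ satisfy \eqref{PA=(VA+)VA}, \eqref{Rm=WA oplus Ker}, \eqref{APA=A}, \eqref{WA:RAT:KerAperp}; the identities then extend to all of $\strip_r^d$ by analyticity of both sides (using the previous lemma, which guarantees $R_A$, $\widetilde{A}$, $h_A$ are analytic there). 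Everything else is routine matrix algebra and iteration of the cocycle relation.
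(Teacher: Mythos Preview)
Your proposal has genuine gaps in all three identities, stemming from one common oversight: the single-step relation you rely on, $\widetilde{A}(x)P_A(x)=\widetilde{A}(x)$, is \emph{false}. Since $\widetilde{A}(x)=h_A(x)P_A(Tx)A(x)$, this would require $A(x)$ to annihilate $\Ker{\An{m}(x)}=W_A(x)^\perp$, but in general $\Ker{A(x)}\subsetneq\Ker{\An{m}(x)}$. The correct one-step relation is $R_A(x)V_A(x)=V_A(Tx)B(x)$ with $B:=\widetilde{A}P_A$, and your induction actually proves $R_A^{(n)}V_A=(V_A\circ T^n)B^{(n)}$. The remaining work---which you have not done---is to show $B^{(n)}=\widetilde{A}^{(n)}$ for $n\ge m$. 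This needs the observation that $\Ker{\widetilde{A}^{(n)}(x)}=\Ker{\An{n}(x)}$ for $n\ge m$ (coming from \eqref{WA:RAT:KerAperp} together with iterating Proposition~\ref{prop Atil:WA->WAT} to get $\Range{\widetilde{A}^{(n)}(x)^T}=W_A(x)$), so that $\widetilde{A}^{(n)}(x)(I-P_A(x))=0$. The same issue undermines your argument for \eqref{Bn}: the interior projections in the expanded product of $\widetilde{A}^{(n)}$ sit between \emph{single} factors $A(T^j x)$, so ``accumulating $m$ factors of $A$'' never produces an honest $\An{m}$ to which \eqref{APA=A} applies; the paper instead uses the recursion $\widetilde{A}^{(n+1)}(x)=h_A(x)\widetilde{A}^{(n)}(Tx)A(x)$, valid once $\widetilde{A}^{(n)}P_A=\widetilde{A}^{(n)}$ is known for $n\ge m$.

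Your derivation of \eqref{An+1} from \eqref{Bn} by ``solving for $\An{n}(x)$'' is also a real gap: \eqref{Bn} expresses $\widetilde{A}^{(n)}$ in terms of $\An{n-m}$, whereas \eqref{An+1} expresses $\An{n}$ in terms of $\widetilde{A}^{(n-m)}$; these are not rearrangements of each other, and the matrix $\widetilde{A}^{(m)}(T^{n-m}x)$ has rank $k<m$, so cannot simply be inverted. The paper handles this by constructing an explicit left quasi-inverse $M_{n,m}^+$ satisfying $M_{n,m}^+ M_{n,m}=P_A(T^{n-m}\cdot)$, then left-multiplying by $\An{m}(T^{n-m}\cdot)$ and using \eqref{APA=A} once more. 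This step is the most delicate part of the lemma and cannot be skipped.
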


\begin{remark}
Since $m$ is fixed, the factors $\widetilde{A}^{(m)}(T^{n-m} x)$
and $\An{m}(T^{n-m} x)$ will essentially be negligible in future estimates. 
\end{remark}

\begin{proof}
Since the functions $V_A$, $R_A$ and $\widetilde{A}$ are analytic
it is enough to check~\eqref{RV=VAtil}  for most phases $x\in\T^d$.

Consider the set of good phases 
$$G_A := \bigcap_{n \ge 1} T^{-n} \Omega_A \cap \bigcap_{n\ge1} T^{-n} \{h_A\neq 0\} .$$

Then  $G_A$ has full  measure, and 
the previous relations
~\eqref{PA=(VA+)VA},
~\eqref{APA=A},
~\eqref{WA:RAT:KerAperp} and~\eqref{Atilde:inv} hold for all iterates $T^n x$ and any phase $x\in G_A$.

From~\eqref{RA:def:eq} and~\eqref{PA=(VA+)VA}  we get
$$ R_A(x)\, V_A(x)= V_A(T x)\, \widetilde{A}(x)\, P_A(x) =
 V_A(T x)\, B(x)  $$
 where  $B=  \widetilde{A} \, P_A$. Iterating this relation we get for all $n\geq 1$,
 $$ R_A^{(n)}(x)\, V_A(x)= V_A(T^n x)\, B^{(n)}(x) . $$
 
Moreover, since  $B=  \widetilde{A} \, P_A$ and $(P_A\circ T) \widetilde{A}=\widetilde{A}$,
$$ B^{(n)}(x)= \widetilde{A}^{(n)}(x)\,P_A(x) . $$

From~\eqref{WA:RAT:KerAperp}   and iterating  Proposition~\ref{prop Atil:WA->WAT}, for all $n\geq m$,
$$\Range{\An{n}(x)^T}=W_A(x)=\Range{\widetilde{A}^{(n)}(x)^T}. $$

This implies that
$\Ker{\An{n}(x)}= \Ker{\widetilde{A}^{(n)}(x) }$.
By~\eqref{APA=A} we have
$\An{n}(x)(I-P_A(x))=0$.
If $n\geq m$ then, since $I-P_A(x)$ is the orthogonal projection onto $\Ker{\An{n}(x)}=\Ker{\widetilde{A}^{(n)}(x)}$, we also must have
$\widetilde{A}^{(n)}(x)\,(I-P_A(x))=0$.  
Hence for all $n\geq m$ 
$$B^{(n)}(x)= \widetilde{A}^{(n)}(x)\,P_A(x) = \widetilde{A}^{(n)}(x) . $$
This proves~\eqref{RV=VAtil}.

Moreover
$$ \widetilde{A}^{(n+1)}(x) = h_A(x)\,\widetilde{A}^{(n)}(T x)\, P_A(T x)\,A(x)= h_A(x)\,\widetilde{A}^{(n)}(T x)\, A(x) .$$
By induction, we get for all $n\geq m$   
$$ \widetilde{A}^{(n)}(x) = h_{A}^{(n-m)}(x)\,\widetilde{A}^{(m)}(T^{n-m} x)\, \An{n-m}(x) . $$
This proves~\eqref{Bn}.

Let us now write $M_{n,m}(x) := \widetilde{A}^{(m)}(T^{n-m} x)$.
From~\eqref{Bn}  we get
\begin{equation}
\label{Mnm}
 M_{n,m}(x)\, \widetilde{A}^{(n-m)}(x) = h_{A}^{(n-m)}(x)\, M_{n,m}(x)\, \An{n-m}(x) . 
\end{equation}
The matrix $ M_{n,m}(x)$ can be expressed as 
$$ M_{n,m}(x)  = V_A^+(T^n x)\, R_A^{(m)}(T^{n-m} x)\, V_A(T^{n-m} x), $$
In fact,~\eqref{RV=VAtil} implies that
$$R_A^{(m)}(T^{n-m} x) \, V_A(T^{n-m} x) = V_A (T^n x) \, \widetilde{A}^{(m)}(T^{n-m} x) .$$

Multiplying on the left by $V_A\inv (T^n x)$, and using  that
 $V_A\inv \, V_A = P_A$ and $(P_A \circ T) \widetilde{A} = \widetilde{A}$
we get

\begin{align*}
V_A^+(T^n x)\, R_A^{(m)}(T^{n-m} x)\, V_A(T^{n-m} x) &= 
 P_A (T^n x) \widetilde{A}^{(m)}(T^{n-m} x) \\
 & = P_A (T^n x) \, \widetilde{A} (T^{n-1} x) \, \widetilde{A}^{(m-1)}(T^{n-m} x) \\
& = \widetilde{A} (T^{n-1} x) \, \widetilde{A}^{(m-1)}(T^{n-m} x) \\
& = \widetilde{A}^{(m)}(T^{n-m} x) = M_{n,m}(x) .
\end{align*}
Let\footnote{Although this is not important here, the matrix $M_{n, m}\inv (x)$ thus defined is in fact the pseudo-inverse of $M_{m, n} (x)$.}
$$ M_{n,m}^+ (x)  := V_A^+(T^{n-m} x)\, R_A^{(m)}(T^{n-m} x)^{-1}\, V_A(T^{n} x). $$

Then
$$ M_{n,m}^+ (x)\, M_{n,m} (x) = V_A^+(T^{n-m} x)\,
V_A(T^{n-m} x) = P_A(T^{n-m} x). $$
Thus, left multiplying ~\eqref{Mnm}
by $M_{n,m}^+ (x)$ we get 
\begin{align*}
P_A(T^{n-m} x)\, \An{n-m}(x)
& =h_{A}^{(n-m)}(x)^{-1}\, P_A(T^{n-m} x)\, \widetilde{A}^{(n-m)}(x)   \\
& =h_{A}^{(n-m)}(x)^{-1}\, \widetilde{A}^{(n-m)}(x) .
\end{align*}  
Finally,  left multiplying this equality 
by $\An{m}(T^{n-m}  x)$ we get
$$ \An{m}(T^{n-m} x)\,P_A(T^{n-m} x)\, \An{n-m}(x)
= h_{A}^{(n-m)}(x)^{-1}\, \An{m}(T^{n-m} x)\,  \widetilde{A}^{(n-m)}(x) .$$
Since $\An{m}(T^{n-m} x)\,(I-P_A(T^{n-m} x))=0$
it follows that  
\begin{align*}
\An{n}(x) & =
\An{m}(T^{n-m} x)\, \An{n-m}(x)\\
& =  \An{m}(T^{n-m} x)\, P_A(T^{n-m} x)\, \, \An{n-m}(x) \\
&= h_{A}^{(n-m)}(x)^{-1}\, \An{m}(T^{n-m} x)\,  \widetilde{A}^{(n-m)}(x)  .
\end{align*}
This proves~\eqref{An+1}.
\end{proof}


\subsection*{$L^2$-boundedness and the fiber LDT estimate} We use the semi-conjugation relations in Lemma~\ref{semi-conjugation} to establish non-uniform $L^2$-bounds and LDT estimates for the iterates of an analytic cocycle. A direct consequence of the $L^2$-boundedness is the fact, interesting in itself, that an analytic cocycle is nilpotent if and only if its top Lyapunov exponent is $-\infty$. This result was recently obtained independently in~\cite{Sadel-Xu}. 

\subsubsection*{A crucial tool: the {\L}ojasiewicz inequality.} We recall the following result that describes the {\em transversality} property of non-identically zero analytic functions (see for instance  Lemmas 6.1 and 6.2 in~\cite{DK-book}. This result will be used repeatedly throughout the paper. 

\begin{proposition}({\L}ojasiewicz inequality)\label{Loj-prop}
Given $f \in \analyticf{d}$ with  $f \not \equiv 0$, there are constants $S = S (f) < \infty$ and $b = b (f) > 0$ such that 
$$\abs{ \{ x \in \T^d \colon \sabs{ f (x) } <t \}  } < S \, t^b \quad \text{ for all } \, t > 0 .$$
Furthermore, there is a constant $C = C (S, b) = C (f) < \infty$ such that
$$\norm{  \log \sabs{f} }_{L^2 (\T^d)} \le C .$$
\end{proposition}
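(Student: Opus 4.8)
The plan is to establish the two assertions in sequence, the first ({\L}ojasiewicz-type volume bound) being the substantive one and the second ($L^2$-bound on $\log\abs f$) following from it by a routine layer-cake integration.

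\emph{The volume bound.} I would argue locally and then patch by compactness. First, since $f$ is real-analytic and non-identically zero on $\T^d$, it extends holomorphically to $\strip_r^d$, and its zero set $Z(f)=\{x\in\T^d:f(x)=0\}$ is a proper real-analytic subvariety, hence of measure zero. The quantitative statement is the classical {\L}ojasiewicz inequality: near each point $x_0\in\T^d$ there is a neighborhood $U_{x_0}$, a constant $c_{x_0}>0$ and an exponent $\beta_{x_0}>0$ such that $\abs{f(x)}\ge c_{x_0}\,\dist(x,Z(f))^{\beta_{x_0}}$ for $x\in U_{x_0}$ (when $x_0\notin Z(f)$ this is trivial with $\beta=0$ and a uniform lower bound). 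Combined with the fact that, again by analyticity, the tube $\{x:\dist(x,Z(f))<\rho\}$ around the subvariety $Z(f)$ has volume $\lesssim \rho$ (a Weierstrass-preparation / Vitali-covering estimate, or the standard bound on tubes around real-analytic sets), one gets on $U_{x_0}$ the local estimate $\abs{\{x\in U_{x_0}:\abs{f(x)}<t\}}\le \abs{\{x:\dist(x,Z(f))<(t/c_{x_0})^{1/\beta_{x_0}}\}}\lesssim t^{1/\beta_{x_0}}$. Covering the compact torus $\T^d$ by finitely many such $U_{x_0}$, taking $b:=\min_j 1/\beta_{x_0^{(j)}}>0$ and $S$ the sum of the finitely many implied constants (adjusted for $t$ large using $\abs{\T^d}=1$), yields $\abs{\{x:\abs{f(x)}<t\}}<S\,t^b$ for all $t>0$. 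Since the paper cites Lemmas~6.1 and 6.2 of \cite{DK-book} for this, I would simply invoke that reference for the precise constants rather than reprove the {\L}ojasiewicz inequality from scratch.

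\emph{The $L^2$-bound.} Given the volume bound, write $\norm{\log\abs f}_{L^2(\T^d)}^2=\int_{\T^d}(\log\abs{f(x)})^2\,dx$ and split according to whether $\abs{f(x)}\ge 1$ or $\abs{f(x)}<1$. On the first set, $\abs{\log\abs f}\le\log\norm f_r$ (or any uniform upper bound coming from the sup over the annulus), contributing a finite constant. On the second set, use the layer-cake formula $\int_{\{\abs f<1\}}(\log\abs f)^2\,dx = \int_0^\infty \abs{\{x:(\log\abs{f(x)})^2>\lambda,\ \abs{f(x)}<1\}}\,d\lambda$; the inner set is contained in $\{x:\abs{f(x)}<e^{-\sqrt\lambda}\}$, which by the volume bound has measure $<S\,e^{-b\sqrt\lambda}$, and $\int_0^\infty S\,e^{-b\sqrt\lambda}\,d\lambda<\infty$ by the substitution $\lambda=u^2$. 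Hence $\norm{\log\abs f}_{L^2(\T^d)}\le C(S,b)<\infty$, with $C$ depending only on $S$, $b$ (and the crude upper bound on $\abs f$, which itself is controlled once $f$ is fixed), giving $C=C(f)$.

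\emph{Main obstacle.} The genuinely nontrivial ingredient is the local {\L}ojasiewicz inequality together with the volume estimate for tubes around a real-analytic subvariety of $\T^d$; in dimension $d>1$ the zero set $Z(f)$ can be a positive-dimensional variety, so one cannot just count isolated zeros as in the one-variable case, and one must invoke the structure theory of analytic sets (or Weierstrass preparation plus induction on $d$) to control both the exponent $b$ and the tube volume uniformly. Since the statement explicitly points to Lemmas~6.1--6.2 of \cite{DK-book}, the cleanest route is to quote those results; the only work remaining in this proof is then the elementary layer-cake passage from the volume bound to the $L^2$-bound, which is the easy part.
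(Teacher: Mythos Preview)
Your proposal is correct and in fact more detailed than the paper itself: the paper does not prove this proposition at all, but simply states it and refers the reader to Lemmas~6.1 and~6.2 of \cite{DK-book}. Your sketch (local {\L}ojasiewicz inequality plus compactness for the volume bound, layer-cake integration for the $L^2$-bound, with the same reference invoked for the precise constants) is a sound outline of how those lemmas are established, and the second part is exactly the standard derivation.

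One small quibble: your tube-volume claim ``$\lesssim\rho$'' is stated as if the zero set were always a hypersurface; in general the codimension of $Z(f)$ in $\T^d$ can be larger, and the correct statement is $\lesssim\rho^c$ for some $c\ge 1$ depending on the local structure of $Z(f)$. This only helps you (the resulting exponent $b$ is at least as large), so the argument goes through unchanged.
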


We note that this result is in fact uniform in $f$, in the sense that the constants $S, b$ and $C$ are stable under small perturbations of $f$. 
Also, the $L^2$-norm may be replaced by any other $L^p$-norm, $1\le p<\infty$. 

\medskip

We begin with some preparatory estimates that use in an essential way the {\L}ojasiewicz inequality.

\begin{lemma}\label{Loj-pseudo-inverse:lemma}
Let $V \in \qpcmatkm{d}$ with $\rank{V (x)} = k$ a.e. Then there are constants $S = S (V) < \infty$ and $c = c (V) > 0$ such that for all $ t > 0$ we have
\begin{align}
\babs{ \{ x \in \T^d \colon \norm{V\inv (x)} > \frac{1}{t} \}   } & < S \, t^c  \label{Loj-eq1}\\
\babs{ \{ x \in \T^d \colon \norm{V (x)} < t  \}   } & < S \, t^c \label{Loj-eq2}
\end{align}
\end{lemma}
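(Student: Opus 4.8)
The plan is to deduce both estimates from the {\L}ojasiewicz inequality (Proposition~\ref{Loj-prop}) applied to suitable scalar analytic functions built out of $V$. First I would fix the scalar functions. Since $\rank{V(x)} = k$ almost everywhere, the function $x \mapsto \det\bigl(V(x)\,V(x)^T\bigr)$ is an analytic function on $\strip_r^d$ which is not identically zero. Write $\phi(x) := \det\bigl(V(x)\,V(x)^T\bigr)$, and also let $\psi(x) := \norm{\adj\bigl(V(x)\,V(x)^T\bigr)}$, which is bounded above by a constant depending only on $\norm{V}_r$ since the entries of the adjugate are polynomials in the entries of $V$. Both ingredients are exactly what appears in Lemma~\ref{pseudo-inv-estimate}.

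For \eqref{Loj-eq1}: by Lemma~\ref{pseudo-inv-estimate} we have, wherever $\rank{V(x)}=k$,
\[
\norm{V\inv(x)} \le \sqrt{\frac{\psi(x)}{\phi(x)}} \le \sqrt{\frac{C_1}{\phi(x)}},
\]
where $C_1 = C_1(\norm{V}_r) < \infty$ bounds $\psi$. Hence $\norm{V\inv(x)} > 1/t$ forces $\phi(x) < C_1 t^2$. Applying Proposition~\ref{Loj-prop} to the nonzero analytic function $\phi$, there are constants $S_0 = S_0(\phi) < \infty$ and $b_0 = b_0(\phi) > 0$ with $\abs{\{x : \sabs{\phi(x)} < s\}} < S_0\, s^{b_0}$ for all $s > 0$; taking $s = C_1 t^2$ gives
\[
\babs{\{x \in \T^d : \norm{V\inv(x)} > 1/t\}} \le \babs{\{x : \phi(x) < C_1 t^2\}} < S_0 (C_1 t^2)^{b_0} = S\, t^c
\]
with $c := 2 b_0$ and $S := S_0 C_1^{b_0}$, both depending only on $V$. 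For \eqref{Loj-eq2}: note $\norm{V(x)}^2 \ge s_1(V(x))^2 \ge \frac{1}{k}\sum_j s_j(V(x))^2$, so one can either apply Proposition~\ref{Loj-prop} directly to a nonzero entry of $V$, or more cleanly observe that $\phi(x) = \prod_{j=1}^k s_j(V(x))^2 \le s_1(V(x))^{2k} \le \norm{V(x)}^{2k}$, hence $\norm{V(x)} < t$ implies $\phi(x) < t^{2k}$, and Proposition~\ref{Loj-prop} again yields $\abs{\{x : \norm{V(x)} < t\}} \le \abs{\{x : \phi(x) < t^{2k}\}} < S_0\, t^{2k b_0}$. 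Adjusting $S$ and $c$ to serve both bounds simultaneously (take the larger $S$ and smaller $c$) gives the claimed uniform constants.

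I do not expect a genuine obstacle here; the statement is essentially a packaging of Lemma~\ref{pseudo-inv-estimate} together with the {\L}ojasiewicz inequality. The only point requiring a little care is confirming that $\phi = \det(VV^T)$ is a bona fide analytic function on the strip $\strip_r^d$ (it is, being a polynomial in the entries of $V$, which are analytic there) and that it is not identically zero (which is exactly the hypothesis $\rank{V(x)} = k$ a.e.). One should also remark, as the paper does after Proposition~\ref{Loj-prop}, that the constants are stable under small perturbations of $V$, since they come from the {\L}ojasiewicz constants of $\phi$ and the sup-norm bound $C_1$, both of which vary controllably; this uniformity will matter later but is immediate from the cited uniform version of the {\L}ojasiewicz inequality.
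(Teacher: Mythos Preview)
Your proof is correct and follows essentially the same route as the paper: both arguments reduce to the {\L}ojasiewicz inequality applied to $\phi(x)=\det(V(x)V(x)^T)$, with \eqref{Loj-eq1} obtained via the upper bound in Lemma~\ref{pseudo-inv-estimate}. The only difference is in \eqref{Loj-eq2}: the paper uses the lower bound $1/\norm{V(x)}\le\norm{V\inv(x)}$ from Lemma~\ref{pseudo-inv-estimate} to reduce \eqref{Loj-eq2} directly to \eqref{Loj-eq1}, whereas you use the elementary bound $\phi(x)=\prod_j s_j(V(x))^2\le\norm{V(x)}^{2k}$ to apply {\L}ojasiewicz again; both one-line reductions are equally valid.
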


\begin{proof}
Since $\rank{V (x)} = k$ for a.e. phase $x \in \T^d$, for those phases, by Lemma~\ref{pseudo-inv-estimate} we have
$$\norm{ V\inv (x) }^2 \le \frac{\norm{ \adj [ V(x) \, V^T (x) ] }}{ \det [ V (x) \, V^T (x) ] } \le \frac{C}{\det [ V (x) \, V^T (x) ] } $$
for some constant $C = C (\norm{V}_{L^\infty}, k, m) < \infty$.

Therefore,
\begin{equation}\label{Loj-eq10}
\{ x  \colon \norm{V\inv (x)} > \frac{1}{t} \}  \subset \{ x  \colon \abs{\det [ V (x) \, V^T (x) ] } < C \, t^2  \} \, . 
\end{equation}

The function $f (x) := \det [ V (x) \, V^T (x) ] $ is $\not \equiv 0$ on $\T^d$ (since $V(x)$ has maximal rank $k$ for a.e. $x$) and it has an analytic extension $f (z) = \det [ V (z) \, V^T (z) ] $ to $\strip_r^d$. The {\L}ojasiewicz inequality is then applicable to $f (x)$ and it implies 
$$\babs{ \{ x \in \T^d \colon \abs{f(x)} \le C \, t^2 \} } \le S \, t^c$$
for some constants $S = S (f) < \infty$ and $c = c (f) > 0$.

Estimate \eqref{Loj-eq1} then follows from \eqref{Loj-eq10}.

By the first inequality in Lemma~\ref{pseudo-inv-estimate}, for a.e. $x \in \T^d$,
$$\frac{1}{\norm{V\inv (x)}} \le \norm{V (x) }  \,,$$
hence
$$ \{ x \in \T^d \colon \norm{V (x)} < t  \}   \subset \{ x  \in \T^d \colon \norm{V\inv (x)} > \frac{1}{t} \}   \, . $$ 

Then \eqref{Loj-eq2} follows from \eqref{Loj-eq1}.
\end{proof}

\begin{corollary}\label{Loj-pseudo-inverse:cor}
Under the same assumptions as in the previous lemma, there are constants  $S = S (V) < \infty$, $c = c (V) > 0$ and $T_0 = T_0 (V) < \infty$ such that for all $T \ge T_0$ we have
\begin{align}
\babs{ \{ x \in \T^d \colon \abs{ \log \norm{V\inv (x)} } > T \}   } & < S \, e^{- c T}  \label{Loj-eq3}\\
\babs{ \{ x \in \T^d \colon \abs{ \log \norm{V  (x)} } > T \}   } & < S \, e^{- c T} \label{Loj-eq4}
\end{align}
In particular we have that $\log \norm{V\inv}, \, \log \norm{V} \in L^2 (\T^d)$.
\end{corollary}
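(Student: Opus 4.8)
The plan is to convert the polynomial-in-$t$ tail bounds of Lemma~\ref{Loj-pseudo-inverse:lemma} into exponential-in-$T$ tail bounds through the substitution $t=e^{-T}$, after first using a crude uniform bound to discard the ``wrong half'' of each absolute value.

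First I would note that $V$, being analytic (hence continuous) on the compact torus, satisfies $M:=\sup_{x\in\T^d}\norm{V(x)}<\infty$, and that by the first inequality in Lemma~\ref{pseudo-inv-estimate} we have $\norm{V\inv(x)}\ge 1/\norm{V(x)}\ge 1/M$ for a.e.\ $x\in\T^d$. Hence, setting $T_0:=\log M$, for every $T\ge T_0$ the sets $\{x\colon\norm{V\inv(x)}<e^{-T}\}$ and $\{x\colon\norm{V(x)}>e^{T}\}$ are null, so up to null sets
\[
\{x\colon\abs{\log\norm{V\inv(x)}}>T\}=\{x\colon\norm{V\inv(x)}>e^{T}\},\qquad
\{x\colon\abs{\log\norm{V(x)}}>T\}=\{x\colon\norm{V(x)}<e^{-T}\}.
\]
Applying \eqref{Loj-eq1} with $t=e^{-T}$ to the first set and \eqref{Loj-eq2} with $t=e^{-T}$ to the second immediately yields the bounds $S\,e^{-cT}$, which are exactly \eqref{Loj-eq3} and \eqref{Loj-eq4}.

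For the integrability statement I would invoke the layer-cake formula: with $g:=\abs{\log\norm{V\inv}}$,
\[
\int_{\T^d} g^2\,dx=\int_0^\infty 2T\,\abs{\{x\in\T^d\colon g(x)>T\}}\,dT\le\int_0^{T_0}2T\,dT+\int_{T_0}^\infty 2T\,S\,e^{-cT}\,dT<\infty,
\]
since $\abs{\{g>T\}}\le\abs{\T^d}=1$ for $T<T_0$ and the second integral converges. The same computation with $g:=\abs{\log\norm{V}}$ shows $\log\norm{V}\in L^2(\T^d)$; in fact this argument gives membership in every $L^p(\T^d)$, $1\le p<\infty$.

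I do not expect a genuine obstacle here. The only points requiring care are to separate the two contributions to $\abs{\log(\cdot)}$, one of which disappears thanks to the trivial bounds $1/M\le\norm{V\inv(x)}$ and $\norm{V(x)}\le M$ valid a.e., and to observe that $T_0$ (as well as $S$ and $c$) depends only on $V$, through $\norm{V}_{L^\infty}$ and the constants furnished by Lemma~\ref{Loj-pseudo-inverse:lemma}.
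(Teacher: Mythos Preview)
Your proof is correct and follows essentially the same approach as the paper: both use the bound $\norm{V\inv(x)}\ge 1/\norm{V(x)}$ from Lemma~\ref{pseudo-inv-estimate} to eliminate one side of the absolute value when $T\ge T_0:=\log\norm{V}_{L^\infty}$, then substitute $t=e^{-T}$ into \eqref{Loj-eq1}--\eqref{Loj-eq2}. Your layer-cake argument for the $L^2$ bound is in fact slightly more explicit than the paper's, which only writes out the $L^1$ version of the formula.
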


\begin{proof}
Since for a.e. $x \in \T^d$ we have
$$\frac{1}{\norm{V (x)}} \le \norm{V\inv (x) }  \,,$$
it follows that
$$\log \norm{V\inv (x) } \ge - \log \norm{V (x)} \ge - \log \norm{V}_{L^\infty} =: - T_0 \, .$$

Then if $T \ge T_0$,
$$ \{ x  \colon \abs{ \log \norm{V\inv (x)} } > T \}  \subset  \{ x  \colon \log \norm{V\inv (x)}  > T \}  = \{ x \colon \norm{V\inv (x)} > e^T \} \, ,$$
and \eqref{Loj-eq3} follows from \eqref{Loj-eq1} after the change of variables $t := e^{-T}$. 

Estimate \eqref{Loj-eq4} follows in a similar way from \eqref{Loj-eq2}. The $L^2$ bounds are  consequences of \eqref{Loj-eq3}, \eqref{Loj-eq4} and the fact that
$$\int_{\T^d} \abs{\phi (x) } \, d x = \int_0^\infty \, \abs{ \{ x \in \T^d \colon \abs{\phi (x) } > T \} } \, d T \, ,$$
which holds for every measurable function $\phi$.
\end{proof}

\medskip

Given any cocycle $A \in \qpcmat{d}$ of any dimension $m\ge1$, we will use the notations $ A^{(n)}(z):= \,A(T^{n-1} z) \, \ldots \, A(T z) \, A(z)$,
$$\un{n}{A} (z) :=   \frac{1}{n} \log \norm{\An{n} (z)} \ \text{and} \ \avg{\un{n}{A}} = \LE{n}_1 (A) := \int_{\T^d}  \frac{1}{n} \log \norm{\An{n} (x)} \, d x .$$

By the analyticity of $A$, clearly  $\un{n}{A} (z)$ is separately subharmonic on $\strip_r^d$, and if $\An{n}  \not \equiv 0$ then $\un{n}{A} \not \equiv - \infty$.

The following statements relate the  functions $\un{n}{A}$ associated to the iterates of a cocycle $A$, to the corresponding functions $\un{n}{R_A}$ and $\un{n}{h_A}$ a\-sso\-ci\-ated to a maximal rank cocycle $R_A$ and to a non-identically singular one-dimensional cocycle $h_A$.

\begin{proposition}\label{reduction-prop}
Let $A \in \qpcmat{d}$ be a quasi-periodic cocycle with   $\rank{A}=k \ge 1$. There are a cocycle $R_A \in \qpcmatk{d}$ with $\det [R_A (x) ] \not \equiv 0$, a one dimensional cocycle $h_A \in \analyticf{d}$ with $h_A (x) \not \equiv 0$, a function $V_A \in \qpcmatkm{d}$ with $\rank{V_A (x)} = k$ a.e. and a constant $\ldtmeas_A < \infty$ such that for all phases $x \in \T^d$ and for all iterates $n \ge m$ we have:
\begin{align*}
  \un{n}{A} (x) & \geq  \frac{n+m}{n} \, \un{n+m}{R_A} (x) - \un{n}{h_A} (x)  - \frac{\ldtmeas_A}{n} - \frac{1}{n} \, \log \norm{V_A\inv (x) }   \\
  \un{n}{A} (x) & \leq    \frac{n-m}{n} \, \left[ \un{n-m}{R_A} (x) - \un{n-m}{h_A} (x) \right] + \frac{\ldtmeas_A}{n} + \frac{1}{n} \, \log \norm{V_A\inv (T^{n} x) }  .
\end{align*}
\end{proposition}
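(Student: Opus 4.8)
\emph{Proof plan.} The plan is to convert the three semi-conjugacy identities of Lemma~\ref{semi-conjugation} into two-sided pointwise norm bounds, using only sub-multiplicativity of the operator norm together with the elementary facts $V_A(x)\,V_A\inv(x)=I_k$ (immediate from~\eqref{VA+}) and $V_A\inv(x)\,V_A(x)=P_A(x)$ (this is~\eqref{PA=(VA+)VA}). First I would record that, since $A$, $\widetilde A$ and $V_A$ admit holomorphic extensions to $\strip_r^d$ continuous up to the boundary, the norms $\normz{A^{(m)}}$, $\normz{\widetilde A^{(m)}}$ and $\normz{V_A}$ are all finite; I then take $\ldtmeas_A$ to be a fixed constant, depending only on $A$ (hence also on $m$), chosen large enough to absorb the errors coming from these fixed quantities and from the finitely many ``boundary'' scales $m\le n<2m$. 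Off the full-measure set on which $V_A$ and all its iterates have rank $k$, both asserted inequalities hold trivially under the natural convention $\log\norm{V_A\inv(x)}=+\infty$, so I argue on that good set.

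For the lower bound I would start from~\eqref{Bn} at scale $n+m$, that is, $\widetilde A^{(n+m)}(x)=h_A^{(n)}(x)\,\widetilde A^{(m)}(T^n x)\,A^{(n)}(x)$, which yields $\norm{A^{(n)}(x)}\ge\norm{\widetilde A^{(n+m)}(x)}\,/\,(\abs{h_A^{(n)}(x)}\,\normz{\widetilde A^{(m)}})$. From~\eqref{RV=VAtil} at scale $n+m$, together with $R_A^{(n+m)}(x)=R_A^{(n+m)}(x)\,V_A(x)\,V_A\inv(x)$, one gets $\norm{R_A^{(n+m)}(x)}\le\normz{V_A}\,\norm{V_A\inv(x)}\,\norm{\widetilde A^{(n+m)}(x)}$. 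Combining these, taking $\frac1n\log$, and recognizing $\frac1n\log\norm{R_A^{(n+m)}(x)}=\frac{n+m}{n}\un{n+m}{R_A}(x)$ and $\frac1n\log\abs{h_A^{(n)}(x)}=\un{n}{h_A}(x)$, one arrives exactly at the first inequality, the fixed $\log$-norms being swept into $\ldtmeas_A$. No scale restriction beyond $n\ge m$ is needed here, since the index $n+m$ is automatically $\ge m$.

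For the upper bound I would use~\eqref{An+1} directly, $A^{(n)}(x)=h_A^{(n-m)}(x)^{-1}A^{(m)}(T^{n-m}x)\,\widetilde A^{(n-m)}(x)$, take norms, and estimate $\norm{\widetilde A^{(n-m)}(x)}$. When $n\ge 2m$ one has $\widetilde A^{(n-m)}(x)=V_A\inv(T^{n-m}x)\,R_A^{(n-m)}(x)\,V_A(x)$: left-multiply~\eqref{RV=VAtil} (at scale $n-m\ge m$) by $V_A\inv(T^{n-m}x)$, use $V_A\inv V_A=P_A$, and invoke the fact --- established inside the proof of Lemma~\ref{semi-conjugation} via Proposition~\ref{prop Atil:WA->WAT} --- that $\Range{\widetilde A^{(n-m)}(x)}\subseteq W_A(T^{n-m}x)$, so that $P_A(T^{n-m}x)$ acts as the identity on it. Sub-multiplicativity then gives $\norm{\widetilde A^{(n-m)}(x)}\le\norm{V_A\inv(T^{n-m}x)}\,\norm{R_A^{(n-m)}(x)}\,\normz{V_A}$; substituting back, taking $\frac1n\log$, and rearranging as before produces the second inequality, with the error term $\frac1n\log\norm{V_A\inv(\cdot)}$ evaluated at the orbit point $T^{n-m}x$ --- which is the point $T^n x$ of the statement up to a fixed shift by $m$ iterates (harmless in every subsequent use, and in any case reachable by an entirely analogous manipulation). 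For the finitely many scales $m\le n<2m$ I would instead use the crude bound $\norm{\widetilde A^{(j)}(x)}\le\abs{h_A^{(j)}(x)}\,\normz{A}^{\,j}$ (orthogonal projections have norm $1$), so that the $h_A$-factor in~\eqref{An+1} cancels and $\norm{A^{(n)}(x)}\le\normz{A^{(m)}}\,\normz{A}^{\,n-m}$ is a constant, absorbed into $\ldtmeas_A$.

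The only real obstacle is conceptual rather than computational: because $A$ is identically singular, the comparison between $A^{(n)}$ and the maximal-rank iterate $R_A^{(n+m)}$ must pass through the pseudo-inverse $V_A\inv$, which is an \emph{unbounded} function (it blows up exactly where $V_A$ drops rank). Hence the error terms $\frac1n\log\norm{V_A\inv(\cdot)}$ cannot be avoided and cannot be controlled uniformly as $A$ varies; their $L^2$-control will instead be supplied in the next step by the {\L}ojasiewicz inequality (Proposition~\ref{Loj-prop}) applied to $\det(V_A\,V_A^T)$, at the price of constants depending on $A$. This is precisely why the present proposition yields only a \emph{non-uniform} fiber LDT, the uniform estimate being recovered afterwards by the multiscale / avalanche-principle argument.
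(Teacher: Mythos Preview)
Your argument is correct and follows the paper's proof essentially verbatim: the paper, too, takes norms in~\eqref{RV=VAtil},~\eqref{Bn},~\eqref{An+1} to obtain four intermediate inequalities (labeled there \eqref{uR<=uAtil}, \eqref{uAtil<=uR}, \eqref{uA>=uAtil}, \eqref{uA<=uAtil}) and then combines them pairwise exactly as you do. You are also right that the combination naturally produces $V_A^+(T^{n-m}x)$ rather than $V_A^+(T^{n}x)$; the paper's own computation gives the same, and the shift is irrelevant in every later use.

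One correction: your separate treatment of the range $m\le n<2m$ is both unnecessary and does not actually prove the stated inequality (your crude bound controls only the left-hand side, while the right-hand side can be very negative where $\|R_A^{(n-m)}(x)\|$ is small). The point is that the identity $\widetilde A^{(j)}(x)=V_A^+(T^{j}x)\,R_A^{(j)}(x)\,V_A(x)$ in fact holds for \emph{every} $j\ge1$, not only $j\ge m$: already at $j=1$ one has $\widetilde A(x)P_A(x)=\widetilde A(x)$, because if $v\in\Ker A^{(m)}(x)$ then $A^{(m-1)}(Tx)\,A(x)v=0$, hence $A(x)v\in\Ker A^{(m-1)}(Tx)\subseteq\Ker A^{(m)}(Tx)$ and so $P_A(Tx)A(x)v=0$. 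Consequently $B^{(j)}=\widetilde A^{(j)}P_A=\widetilde A^{(j)}$ for all $j$, and your main argument already covers every $n\ge m$ without a side case.
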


\begin{proof}
Recall that $V_A^+ \, V_A=P_A$ by~\eqref{PA=(VA+)VA}
and $V_A\, V_A^+ = {\rm id}_{\R^k}$ by~\eqref{VA+}.
Hence, the semi-conjugation~\eqref{RV=VAtil} implies that
\begin{equation*}
 R_A^{(n)}(x) = V_A(T^n x)\, \widetilde{A}^{(n)}(x)\,V_A^+(x) .
\end{equation*}
Therefore, taking norms and logarithms and using that $V_A$ is bounded,
\begin{equation}
\label{uR<=uAtil}
\un{n}{R_A}(x) \leq \un{n}{\widetilde{A}}(x) + \frac{\ldtmeas_A}{n} + \frac{1}{n} \, \log \norm{V_A\inv(x) }    . 
\end{equation}

By definition~\eqref{Atilde:def} we have 
$P_A (T x) \, \widetilde{A} (x) = \widetilde{A} (x)$, hence iterating, $P_A (T^n x) \,  \widetilde{A}^{(n)} (x) =  \widetilde{A}^{(n)} (x) $.
Multiplying  the semi-conjugation~\eqref{RV=VAtil}  
   on the left by $V_A^+ (T^n x)$ we get  
$$ \widetilde{A}^{(n)}(x)
= V_A^+(T^n x)\, R_A^{(n)}(x)\, V_A(x) . $$

Therefore, taking  norms and logarithms in this relation
\begin{equation}
\label{uAtil<=uR}
\un{n}{\widetilde{A}}(x) \leq \un{n}{R_A}(x) + \frac{\ldtmeas_A}{n} + \frac{1}{n} \, \log \norm{V_A\inv(T^n x) }    . 
\end{equation}

Taking norms and logarithms in~\eqref{Bn} and using that $\An{m}$ is bounded,
\begin{equation}
\label{uA>=uAtil}
\un{n}{A}(x) \geq  \frac{n+m}{n} \, \un{n+m}{\widetilde{A}}(x) -\un{n}{h_A}(x) - \frac{\ldtmeas_A}{n}  . 
\end{equation}

Similarly, taking norms and logarithms in~\eqref{An+1}
\begin{equation}
\label{uA<=uAtil}
\un{n}{A}(x) \leq   \frac{n-m}{n}\, \left[ \un{n-m}{\widetilde{A}}(x) -\un{n-m}{h_A}(x)\right] + \frac{\ldtmeas_A}{n}  . 
\end{equation}

Combining~\eqref{uR<=uAtil} and~\eqref{uA>=uAtil} 
we derive the first inequality.
Combining~\eqref{uAtil<=uR} with~\eqref{uA<=uAtil} 
we obtain the second inequality.
\end{proof}

\begin{corollary}\label{non-unif-L2bound}
Every quasi-periodic cocycle  $A \in \qpcmat{d}$ with $\rank A = k\geq 1$ is $L^2$-bounded, in the sense that there is $\ldtmeas_{0} = \ldtmeas_{0} (A) < \infty$ such that for all iterates $n \ge 1$ we have
$$\norm{ \un{n}{A} }_{L^2 (\T^d)} \le \ldtmeas_{0} \, .$$
\end{corollary}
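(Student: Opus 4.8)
The plan is to derive the uniform $L^2$-bound directly from Proposition~\ref{reduction-prop}, which represents $\un{n}{A}$ in terms of the \emph{fixed} reduced data $R_A$, $h_A$ and $V_A$. The subtlety to keep in mind is that estimating $\An{n}$ directly --- for instance bounding $\un{n}{A}(x)\ge\frac1n\log\sabs{a^{(n)}(x)}$ for a non-identically-zero entry $a^{(n)}$ of $\An{n}$ (such an entry exists because $\rank{A}\ge1$ forces $\An{n}\not\equiv0$) and then invoking the {\L}ojasiewicz inequality, Proposition~\ref{Loj-prop} --- only yields $\un{n}{A}\in L^2(\T^d)$ for each \emph{individual} $n$, with a constant depending on $n$ that may grow without bound. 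So I would first dispose of the finitely many small scales $1\le n\le m-1$ by exactly this direct argument, and then reduce to proving the bound for $n\ge m$, where Proposition~\ref{reduction-prop} applies and the functions $R_A$, $h_A$, $V_A$ no longer depend on $n$.

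The second step is to record $L^2$-bounds for the three ingredients that are uniform in $n$. For $R_A$: since $R_A(x)\in\Mat_k(\R)$ we have $\norm{R_A^{(n)}(x)}\ge\sabs{\det R_A^{(n)}(x)}^{1/k}$, hence
\[
\un{n}{R_A}(x)\ \ge\ \frac1k\cdot\frac1n\sum_{j=0}^{n-1}\log\sabs{\det R_A(T^j x)},
\]
while trivially $\un{n}{R_A}(x)\le\log\norm{R_A}_r$; since $\det R_A$ is analytic and $\not\equiv0$, Proposition~\ref{Loj-prop} gives $\log\sabs{\det R_A}\in L^2(\T^d)$, and because $T$ preserves Haar measure the $L^2$-norm of the Birkhoff average on the right is at most $\norm{\log\sabs{\det R_A}}_{L^2(\T^d)}$; squeezing $\un{n}{R_A}$ between this function and a constant bounds $\norm{\un{n}{R_A}}_{L^2(\T^d)}$ uniformly in $n$. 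For $h_A$: the quantity $\un{n}{h_A}(x)=\frac1n\sum_{j=0}^{n-1}\log\sabs{h_A(T^j x)}$ is itself a Birkhoff average of $\log\sabs{h_A}\in L^2(\T^d)$ (again by Proposition~\ref{Loj-prop}, as $h_A\not\equiv0$), so $\norm{\un{n}{h_A}}_{L^2(\T^d)}\le\norm{\log\sabs{h_A}}_{L^2(\T^d)}$ for every $n$. For the pseudo-inverse term: Corollary~\ref{Loj-pseudo-inverse:cor} already states $\log\norm{V_A\inv(\cdot)}\in L^2(\T^d)$, and the shifted function $\log\norm{V_A\inv(T^n\cdot)}$ has the same $L^2$-norm.

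The final step is assembly. Fixing $n\ge m$, the first inequality of Proposition~\ref{reduction-prop} reads $\un{n}{A}(x)\ge g_n(x)$, where $g_n$ is a linear combination of $\un{n+m}{R_A}$, $\un{n}{h_A}$, $\ldtmeas_A/n$ and $\frac1n\log\norm{V_A\inv(x)}$ with coefficients $\frac{n+m}{n}\le2$ and $\frac1n\le1$; by the triangle inequality in $L^2(\T^d)$ together with the bounds of the previous step, $\norm{g_n}_{L^2(\T^d)}$ is at most a constant independent of $n$. Since also $\un{n}{A}(x)\le\log\norm{A}_r$ everywhere, we have $g_n(x)\le\un{n}{A}(x)\le\log\norm{A}_r$, so $\sabs{\un{n}{A}(x)}\le\sabs{g_n(x)}+\sabs{\log\norm{A}_r}$ pointwise, and therefore $\norm{\un{n}{A}}_{L^2(\T^d)}$ is bounded uniformly over $n\ge m$; taking the maximum with the finitely many values for $1\le n\le m-1$ gives $\ldtmeas_0$. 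The main obstacle is not any single computation but securing the \emph{uniformity} in $n$: this is exactly why the estimate must pass through the fixed reduced data $R_A,h_A,V_A$ rather than through $\An{n}$ itself, and why the elementary observation that $\norm{\frac1n\sum_{j=0}^{n-1}\phi\circ T^j}_{L^2(\T^d)}\le\norm{\phi}_{L^2(\T^d)}$ for measure-preserving $T$ is the pivotal mechanism.
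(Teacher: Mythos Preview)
Your proposal is correct and follows essentially the same approach as the paper: handle the finitely many scales $n<m$ directly via a non-identically-zero entry and {\L}ojasiewicz, and for $n\ge m$ sandwich $\un{n}{A}$ using Proposition~\ref{reduction-prop} together with uniform $L^2$-bounds on $\un{n}{R_A}$, $\un{n}{h_A}$, and $\log\norm{V_A\inv}$. The only cosmetic differences are that the paper cites an external reference (Proposition~6.3 in \cite{DK-book}) for the uniform $L^2$-bounds on $\un{n}{R_A}$ and $\un{n}{h_A}$, whereas you supply the elementary determinant-and-Birkhoff-average argument directly, and the paper invokes both inequalities of Proposition~\ref{reduction-prop} while you use only the lower one together with the trivial pointwise upper bound $\un{n}{A}\le\log\norm{A}_r$---your route is in fact slightly more economical.
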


\begin{proof}
This statement was already proven for {\em non-identically singular} cocycles (see Proposition 6.3 in \cite{DK-book}. Therefore it applies to the maximal rank cocycle $R_A$ and to the one-dimensional cocycle $h_A$ and we have
$$\norm{ \un{n}{R_A} }_{L^2 (\T^d)} \le \ldtmeas_{0} \quad \text{and}   \quad  \norm{ \un{n}{h_A} }_{L^2 (\T^d)} \le \ldtmeas_{0} \, ,$$
for some $\ldtmeas_{0} = \ldtmeas_{0} (A) < \infty$ and  for all $n \ge m$.

Moreover, by Corollary~\ref{Loj-pseudo-inverse:cor}, $\log \norm{V_A\inv} \in L^2$ as well, so by the double estimate in Proposition~\ref{reduction-prop}, for all $n \ge m$ we have $\un{n}{A} \in L^2$ with a uniform bound in $n$.

It remains to show that $\un{n}{A} \in L^2 (\T^d)$ also for  $1 \le n < m$.

Since $\rank{A} \ge 1$, for all $n\ge1$ we have $\An{n}  (x) \not \equiv 0$. 
Let $M \in \qpcmat{d}$ refer to any of the iterates $\An{n}$ with $1 \le n < m$. It is then enough to show that if $M (x) \not\equiv 0$ then $\log \norm{M (x)} \in  L^2 (\T^d)$. At least one of the entries of $M(x)$, let us denote it by $m (x)$, must be non-identically zero. Then for all $x\in\T^d$ we have
$$\log \sabs{ m (x) }  \le \log \norm{M (x)} \le C ,$$
for some finite constant $C$.

But since $m \in \analyticf{d}$ and $m (x) \not\equiv 0$, by say {\L}ojasiewicz ine\-qua\-li\-ty, $\log \sabs{ m (x) } \in L^2 (\T^d)$, which completes the proof.
\end{proof}


\begin{corollary}
\label{coro zero rank}
Given a quasi-periodic cocycle $A \in \qpcmat{d}$ the following  are equivalent:
\begin{enumerate}
\item $\rank{A}=0$,
\item $A$ is nilpotent,
\item $L_1(A)=-\infty$.
\end{enumerate}
\end{corollary}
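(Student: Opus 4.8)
The plan is to establish the three equivalences in the cyclic order $(2) \Rightarrow (3) \Rightarrow (1) \Rightarrow (2)$, with essentially all of the content residing in the single implication $(3) \Rightarrow (1)$. The implication $(1) \Rightarrow (2)$ is immediate from the definitions together with Remark~\ref{rmk zero rank}: if $\rank{A} = 0$ then the (already stabilized) geometric rank satisfies ${\rm r}(\An{m}) = 0$, which forces $\An{m}(x) = 0$ for every $x \in \T^d$, i.e.\ $\An{m} \equiv 0$, so $A$ is nilpotent. For $(2) \Rightarrow (3)$: if $\An{n_0} \equiv 0$ for some $n_0$, then for every $n \ge n_0$ one has $\An{n}(x) = \An{n - n_0}(T^{n_0} x)\,\An{n_0}(x) = 0$, hence $\un{n}{A}(x) \equiv -\infty$ and $\LE{n}_1(A) = \int_{\T^d} \un{n}{A}(x)\,dx = -\infty$; since $L_1(A) = \lim_{n \to \infty} \LE{n}_1(A)$, we conclude $L_1(A) = -\infty$.

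The only implication requiring genuine input is $(3) \Rightarrow (1)$, which I would prove by contraposition. Assume $\rank{A} = k \ge 1$; the goal is to show $L_1(A) > -\infty$. Here I invoke the non-uniform $L^2$-boundedness of Corollary~\ref{non-unif-L2bound}: there is a constant $\ldtmeas_0 = \ldtmeas_0(A) < \infty$ with $\norm{\un{n}{A}}_{L^2(\T^d)} \le \ldtmeas_0$ for every $n \ge 1$. Since the Haar measure on $\T^d$ is a probability measure, Cauchy--Schwarz gives $\sabs{\LE{n}_1(A)} = \sabs{\int_{\T^d} \un{n}{A}(x)\,dx} \le \norm{\un{n}{A}}_{L^1(\T^d)} \le \norm{\un{n}{A}}_{L^2(\T^d)} \le \ldtmeas_0$ for all $n$. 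Letting $n \to \infty$ yields $L_1(A) \ge -\ldtmeas_0 > -\infty$, which is exactly the contrapositive of $(3) \Rightarrow (1)$.

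The main obstacle is thus entirely hidden inside Corollary~\ref{non-unif-L2bound}, which in turn rests on the machinery of Section~\ref{non-unif-ldt}: the semi-conjugation of $A$ (when $\rank{A} = k \ge 1$) to a maximal rank cocycle $R_A$ together with a non-identically-zero scalar cocycle $h_A$ via Lemma~\ref{semi-conjugation}, the already known $L^2$-boundedness for non-identically singular cocycles applied to $R_A$ and $h_A$, and the {\L}ojasiewicz-based control $\log\norm{V_A\inv} \in L^2(\T^d)$ from Corollary~\ref{Loj-pseudo-inverse:cor}; the double inequality of Proposition~\ref{reduction-prop} then transfers the bound to $\un{n}{A}$. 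Granting these, the corollary above is a one-line deduction. I would also emphasize that none of these three equivalences uses any arithmetic condition on $\om$ --- only the analyticity of the cocycle enters, through the structure theory of the rank and through the {\L}ojasiewicz inequality.
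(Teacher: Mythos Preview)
Your proof is correct and follows exactly the same route as the paper: $(1)\Leftrightarrow(2)$ via Remark~\ref{rmk zero rank}, $(2)\Rightarrow(3)$ trivially, and $(3)\Rightarrow(1)$ by contraposition using the uniform $L^2$-bound of Corollary~\ref{non-unif-L2bound}. Your write-up simply supplies the details the paper leaves implicit (e.g.\ the Cauchy--Schwarz step from the $L^2$-bound to a lower bound on $L_1(A)$), and your closing remark that no Diophantine condition on $\omega$ is needed is accurate.
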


\begin{proof}
$(1)\Leftrightarrow(2)$ follows by Remark~\ref{rmk zero rank}. The implication
$(2)\Rightarrow (3)$ is obvious.
Finally $(3)\Rightarrow (1)$ follows from Corollary~\ref{non-unif-L2bound}
by contraposition.
\end{proof}

\begin{proposition}\label{reduction-thm}
Let $A \in \qpcmat{d}$ be a quasi-periodic cocycle with $\rank{A} =: k \ge 1$ and fix any $a \in (0, 1)$. There are a reduced (maximal rank) cocycle $R_A \in \qpcmatk{d}$ with $\det [R_A (x) ] \not \equiv 0$, a one dimensional cocycle $h_A \in \analyticf{d}$ with $h_A (x) \not \equiv 0$ and constants $\ldtmeas_{0} = \ldtmeas_{0} (A) < \infty$, $S = S (A) < \infty$  such that for all phases $x \in \T^d$ and all iterates $n \ge m^{6/a}$,
$$ \un{n}{A} (x) = \un{n}{R_A} (x) - \un{n}{h_A} (x) + r_n (x) ,$$
where the remainder function $r_n (x)$ has the following properties: 
\begin{itemize}
\item[(i)] $\abs{ \{ x \in \T^d \colon \abs{r_n (x)   } > \ldtmeas_{0} \, n^{-a/3}  \} }  < S \, e^{- n^{1-a}} $,

\item[(ii)] $\norm{r_n}_{L^2 (\T^d)}  \le \ldtmeas_{0} \, n^{-a/3}$.
\end{itemize}

\end{proposition}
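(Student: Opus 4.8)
The plan is to read off $r_n$ from the two-sided estimate in Proposition~\ref{reduction-prop}. With $R_A$, $h_A$, $V_A$, $\ldtmeas_A$ as in that proposition, put $r_n(x):=\un{n}{A}(x)-\un{n}{R_A}(x)+\un{n}{h_A}(x)$; rearranging the two inequalities there gives $E_1(x)\le r_n(x)\le E_2(x)$, where $E_1(x):=\big(\tfrac{n+m}{n}\un{n+m}{R_A}(x)-\un{n}{R_A}(x)\big)-\tfrac{\ldtmeas_A}{n}-\tfrac1n\log\norm{V_A\inv(x)}$ and $E_2(x):=\big(\tfrac{n-m}{n}\un{n-m}{R_A}(x)-\un{n}{R_A}(x)\big)-\big(\tfrac{n-m}{n}\un{n-m}{h_A}(x)-\un{n}{h_A}(x)\big)+\tfrac{\ldtmeas_A}{n}+\tfrac1n\log\norm{V_A\inv(T^nx)}$. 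Since $\abs{r_n}\le\abs{E_1}+\abs{E_2}$ pointwise, both (i) and (ii) reduce to showing that each term composing $E_1$ and $E_2$ is $\le\text{const}\cdot n^{-a/3}$ outside an exceptional set of measure $<S\,e^{-n^{1-a}}$, and also $\le\text{const}\cdot n^{-a/3}$ in $L^2(\T^d)$. The terms $\tfrac{\ldtmeas_A}{n}$ are harmless as $a/3<1$; the terms $\tfrac1n\log\norm{V_A\inv(\cdot)}$ and $\tfrac1n\log\norm{V_A\inv(T^n\cdot)}$ are controlled by Corollary~\ref{Loj-pseudo-inverse:cor} and the translation invariance of Haar measure: with threshold $\asymp n^{1-a}$ there, they are $\le\text{const}\cdot n^{-a}$ off a set of measure $<S\,e^{-n^{1-a}}$, and since $\log\norm{V_A\inv}\in L^2$ their $L^2$-norms are $\lesssim n^{-1}$.

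The substantive point is the \emph{scale transfer}, i.e. replacing $\un{n\pm m}{R_A}$ by $\un{n}{R_A}$ and $\un{n-m}{h_A}$ by $\un{n}{h_A}$ up to small errors. For the scalar cocycle $h_A$ this is immediate: $\un{n}{h_A}(x)=\tfrac1n\sum_{i=0}^{n-1}\log\abs{h_A(T^ix)}$ is a Birkhoff average, so $\tfrac{n-m}{n}\un{n-m}{h_A}(x)-\un{n}{h_A}(x)=-\tfrac1n\sum_{i=n-m}^{n-1}\log\abs{h_A(T^ix)}$, a sum of only $m$ translates. For $R_A$ one uses $\norm{R_A^{(n+m)}(x)}\le\norm{R_A}_r^{\,m}\norm{R_A^{(n)}(x)}$ in the easy direction and $\norm{R_A^{(n+m)}(x)}\ge s_{\min}\!\big(R_A^{(m)}(T^nx)\big)\norm{R_A^{(n)}(x)}$ together with $s_{\min}\!\big(R_A^{(m)}(y)\big)\ge\norm{R_A}_r^{-(k-1)m}\prod_{i=0}^{m-1}\abs{\det R_A(T^iy)}$ in the other; here it is essential that $R_A$ is non-identically singular, $\det R_A\not\equiv 0$. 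The same computation applies to the shift $n\mapsto n-m$. In every case one reaches a pointwise bound of the form $\babs{\tfrac{n\pm m}{n}\un{n\pm m}{R_A}(x)-\un{n}{R_A}(x)}\le\tfrac{km}{n}\babs{\log\norm{R_A}_r}+\tfrac1n\sum\babs{\log\abs{\det R_A(T^ix)}}$ over $m$ consecutive indices $i$, and the analogue for $h_A$ with $\det R_A$ replaced by $h_A$.

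It remains to estimate these quantities, and this is where the hypothesis $n\ge m^{6/a}$ (hence $m\le n^{a/6}$) enters. The prefactor terms $\tfrac{km}{n}\babs{\log\norm{R_A}_r}$ are $\lesssim n^{a/6-1}\le n^{-a/3}$ since $a/2\le1$. For the sums of $\log\abs{\det R_A}$ (and of $\log\abs{h_A}$), the {\L}ojasiewicz inequality, Proposition~\ref{Loj-prop}, applied to the non-identically-zero analytic functions $\det R_A$ and $h_A$, yields $\babs{\{x:\babs{\log\abs{\det R_A(x)}}>T\}}<S\,e^{-cT}$ for some $c>0$ and all large $T$; taking $T\asymp n^{1-a}$ and a union bound over the $\le 2m$ relevant translates (the fixed factor $2m$ absorbed into $S$), each such sum is $\le\text{const}\cdot m\,n^{-a}\le\text{const}\cdot n^{-5a/6}\le\text{const}\cdot n^{-a/3}$ off a set of measure $<S\,e^{-n^{1-a}}$, while its $L^2$-norm is at most $\tfrac{m}{n}\norm{\log\abs{\det R_A}}_{L^2(\T^d)}\lesssim n^{a/6-1}\le n^{-a/3}$, again by {\L}ojasiewicz and translation invariance. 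Assembling these estimates into $E_1$ and $E_2$ and choosing $\ldtmeas_0$ larger than the resulting absolute constant, (i) follows by a union bound over the finitely many exceptional sets and (ii) by the triangle inequality in $L^2$.

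The main obstacle is precisely this scale transfer for $R_A$: although $R_A$ has maximal rank, for $d>1$ it may still be singular on an algebraically nontrivial set, so the lower bound on $\norm{R_A^{(n+m)}(x)}$ cannot be taken uniform in $x$ and must be routed through $\det R_A$ and the {\L}ojasiewicz inequality. What closes the argument is the largeness of $n$ relative to the fixed dimension $m$ built into the exponent $6/a$, which turns $\tfrac1n$ times $\mathcal{O}(m)$ bounded-mean terms into something comfortably below the target $n^{-a/3}$; and for the $L^2$ bound (ii) --- as opposed to the mere $L^2$-boundedness already supplied by Corollary~\ref{non-unif-L2bound} --- it matters that every error term carries an explicit $\tfrac{m}{n}$ or $\tfrac1n$ factor.
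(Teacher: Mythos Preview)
Your argument is correct and follows the same overall strategy as the paper: define $r_n$ as the defect, sandwich it via Proposition~\ref{reduction-prop}, and control each term using the {\L}ojasiewicz inequality for the non-identically-zero analytic functions $\det R_A$ and $h_A$, together with Corollary~\ref{Loj-pseudo-inverse:cor} for the $V_A\inv$ contributions.

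Where you diverge from the paper is in the scale-transfer step. The paper separates $\tfrac{n\pm m}{n}\un{n\pm m}{R_A}-\un{n}{R_A}$ into two pieces, $[\un{n\pm m}{R_A}-\un{n}{R_A}]$ and $\tfrac{m}{n}\un{n\pm m}{R_A}$, and then controls the first by bounding the one-step increment $\abs{\un{n}{R}-\un{n-1}{R}}$ (via~\eqref{bounds-bet} and {\L}ojasiewicz at threshold $\asymp n^{1-a/2}$) and telescoping over $m$ steps, while the second is handled by a separate sub-linear bound $\abs{\un{n}{R}}\lesssim n^{1-a/2}$. For the $L^2$ estimate the paper then combines the pointwise bound on the good set with an $L^4$ bound and Cauchy--Schwarz on the bad set. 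Your route is more direct: you keep $\tfrac{n\pm m}{n}\un{n\pm m}{R_A}-\un{n}{R_A}=\tfrac{1}{n}\bigl(\log\norm{R_A^{(n\pm m)}}-\log\norm{R_A^{(n)}}\bigr)$ intact and bound it two-sidedly via $\norm{gh}\le\norm{g}\,\norm{h}$ and $\norm{gh}\ge s_{\min}(g)\,\norm{h}$, reducing everything to a sum of $m$ translates of $\log\abs{\det R_A}$. Because this pointwise bound holds almost everywhere (not merely off the {\L}ojasiewicz exceptional set), the $L^2$ estimate follows immediately by the triangle inequality and translation invariance, with no need for the $L^4$/Cauchy--Schwarz detour. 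This is a modest but genuine simplification; the paper's approach, on the other hand, isolates the one-step increment bound~\eqref{eq13}, which is of some independent interest.
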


\begin{proof} 
We estimate the remainder function
$$r_n (x) :=  \un{n}{A} (x) - \un{n}{R_A} (x) + \un{n}{h_A} (x)$$
from below and above using  Proposition~\ref{reduction-prop}. After some simple algebraic manipulations we have:
\begin{subequations}\label{eqstar}
\begin{align}
r_n (x) & \ge  \left[ \un{n+m}{R_A} (x) - \un{n}{R_A} (x) \right] + \frac{m}{n} \, \un{n+m}{R_A} (x)  - \frac{1}{n} \, \log \norm{V_A\inv (x) } - \frac{\ldtmeas_A}{n}\\
r_n (x) & \le  \left[ \un{n-m}{R_A} (x) - \un{n}{R_A} (x) \right]  - \left[ \un{n-m}{h_A} (x) - \un{n}{h_A} (x) \right]  \notag\\
&            -  \frac{m}{n} \, \un{n-m}{R_A} (x) +  \frac{m}{n} \, \un{n-m}{h_A} (x) + \frac{1}{n} \, \log \norm{V_A\inv (T^{n} x) } + \frac{\ldtmeas_A}{n} .
\end{align}
\end{subequations}

Using the triangle inequality, the two estimates on $r_n (x)$ that we have to establish follow from similar estimates on the upper and the lower bound functions above. These in turn follow from similar estimates on each of the terms between brackets as well as on the remaining terms.

\smallskip

First we prove separately that given a cocycle $R \in \qpcmatk{d}$ of any dimension $k \ge 1$, if $f_R (x) := \det [ R (x) ] \not \equiv 0$, then for phases $x$ off of a small set, the following hold: the function $\abs{\un{n}{R} (x)}$ has a sub-linear growth in $n$; the functions   $\un{n}{R} (x)$ and $\un{n-1}{R} (x)$ corresponding to consecutive iterates of $R$, differ only slightly. We then apply these facts to the $k$-dimensional cocycle $R_A$ and to the one-dimensional cocycle $h_A$.

\smallskip

Since $R$ is a non-identically singular analytic cocycle, Proposition 6.3 in~\cite{DK-book} is applicable, and it says
that the functions $\un{n}{R} (x)$ are uniformly bounded from above, and their failure to be bounded from below is captured by Birkhoff averages of the function $\log \abs{f_R}$.

More precisely,  there is $C = C (R) < \infty$ such that  for all $x \in \T^d$, 
\begin{equation}\label{bounds-bet}
 - C + \frac{1}{n} \sum_{i=0}^{n-1} \log \abs{f_R (\transl^i x)}   \le  \un{n}{R} (x)   \le C  .
 \end{equation}

Furthermore, given that $f_R$ is analytic and non-identically zero, the function $\log \abs{f_R} \in L^p (\T^d)$ for all $1\le p < \infty$. Then~\eqref{bounds-bet} implies uniform bounds in $n$ for the $L^p$-norms of $\un{n}{R}$, so we may assume that 
$$\norm{\un{n}{R}}_{L^2 (\T^d)}, \ \norm{\un{n}{R}}_{L^4 (\T^d)} \le C .$$

Applying the {\L}ojasiewicz inequality to $f_R $, for some constants $S = S(R) < \infty$, $\ldtmeas_{0} = \ldtmeas_{0} (R) > 0$ and for any given $a \in (0, 1)$ we have
$$\abs{ \{ x \colon \log \sabs{ f_R (x)  }  \le - \ldtmeas_{0} \, n^{1-a/2}   \} } = \abs{ \{ x \colon  \sabs{ f_R (x)  }  \le e^{- \ldtmeas_{0} \, n^{1-a/2}}   \} } \le S \, e^{- n^{1-a/2}}  .$$

Then if $x$ is outside a set of measure $\le n \, S \, e^{- n^{1-a/2}} < S \, e^{- n^{1-a}} $, 
$$\frac{1}{n} \sum_{i=0}^{n-1} \log \abs{f_R (\transl^i x)}  \ge  - \ldtmeas_{0} \, n^{1-a/2}  ,$$
hence using~\eqref{bounds-bet}, for these phases $x$,
\begin{equation}\label{eq12}
 \abs{ \un{n}{R} (x) } \less   \ldtmeas_{0} \, n^{1-a/2}  .
\end{equation} 

Since $$\norm{R^{(n)} (x)} = \norm{R (T^{n-1} x) \, R^{(n-1)} (x)} \le \norm{R}_{L^\infty} \, \norm{R^{(n-1)} (x)} ,$$ it follows that
$$\un{n}{R} (x) - \un{n-1}{R} (x) \le \frac{\log \norm{R}_{L^\infty}}{n} - \frac{1}{n} \, \un{n-1}{R} (x) ,$$
hence  for $x$ off of that sub-exponentially small set we have
$$\un{n}{R} (x) - \un{n-1}{R} (x)  \le \frac{\ldtmeas_{0}}{n} + \frac{\ldtmeas_{0} \, n^{1-a/2}}{n}   \less \ldtmeas_{0} \, n^{-a/2} .$$

We also have
\begin{align*}
\norm{R^{(n-1)} (x)} & = \norm{R(T^{n-1} x)^{-1} \, R^{(n)} (x)}  \le \norm{R(T^{n-1} x)^{-1} } \, \norm{ R^{(n)} (x)} \\
& = \frac{\norm{\adj  \, R(T^{n-1} x)}}{\abs{\det R(T^{n-1} x)}} \, \norm{ R^{(n)} (x)} \le \frac{ \norm{R}_{L^\infty}^{k-1} }{\abs{f_R (x) } } \, \norm{ R^{(n)} (x)} .
\end{align*}

Then for $x$ off of a sub-exponentially small set, and due to previous considerations, it follows that 
\begin{align*}
\un{n-1}{R} (x) - \un{n}{R} (x) & \le \frac{(k-1) \, \log \norm{R}_{L^\infty}}{n-1} - \frac{\log \abs{f_R (x) } }{n-1} + \frac{1}{n-1} \, \un{n}{R} (x) \\
& \less \frac{ \log \norm{R}_{L^\infty}}{n} +  \frac{\ldtmeas_{0} \, n^{1-a/2}}{n} +  \frac{\ldtmeas_{0} \, n^{1-a/2}}{n} \less \ldtmeas_{0} \, n^{- a/2} .
\end{align*}

We conclude that if $x$ is outside a set of measure $ \less S \, e^{- n^{1-a}}$, then
\begin{equation}\label{eq13}
\abs{  \un{n}{R} (x) - \un{n-1}{R} (x) } <  \ldtmeas_{0} \, n^{- a/2} .
\end{equation}

Furthermore, combining~\eqref{eq13} with the fact that $\norm{\un{n}{R}}_{L^4 (\T^d)} \le C$ and using Cauchy-Schwarz, we also obtain
\begin{equation}\label{eq14}
\norm{  \un{n}{R} - \un{n-1}{R} }_{L^2 (\T^d)} <  \ldtmeas_{0} \, n^{- a/2} .
\end{equation}

Using telescoping sums and the fact that $m \le n^{a/6}$, from \eqref{eq13} we get that off of a sub-exponentially small set,
$$\abs{  \un{n \pm m}{R} (x) - \un{n}{R} (x) } \less  \ldtmeas_{0} \, m \,  n^{- a/2} \less  \ldtmeas_{0} \, n^{- a/3} ,$$ 
and similarly,  from~\eqref{eq14} we get
$$\norm{  \un{n \pm m}{R} - \un{n}{R} }_{L^2 (\T^d)} \less  \ldtmeas_{0} \, n^{- a/3} .$$

Furthermore, \eqref{eq12} implies that off of a sub-exponentially small set,
$$\frac{m}{n} \, \abs{\un{n \pm m}{R} (x)} \less    \ldtmeas_{0} \, m \,  n^{- a/2} \less  \ldtmeas_{0} \, n^{- a/3} ,$$ 
and of course,
$$\frac{m}{n} \,\norm{  \un{n \pm m}{R}}_{L^2 (\T^d)} \le \frac{n^{a/6}}{n} \, C < \ldtmeas_{0} \, n^{- a/3} .$$

Going back to the upper and lower bounds~\eqref{eqstar} on the remainder function $r_n (x)$, applying the above with $R=R_A$ and $R=h_A$ takes care of most of the terms.

The only terms left to consider are $\frac{1}{n} \, \log \norm{V_A\inv (x) }$ and  $\frac{1}{n} \, \log \norm{V_A\inv (T^{n} x) }$. To estimate them, simply apply Corollary~\ref{Loj-pseudo-inverse:cor} with $T:= \ldtmeas_0 \, n^{1-a/3}$. This completes the proof.
\end{proof}

\begin{theorem}\label{nonunif-ldt-thm}
Given a quasi-periodic cocycle  $A \in \qpcmat{d}$ with $L_1(A)>-\infty$ and $\om \in \rm{DC}_t$, there are constants $a_0 = a_0 (d) > 0$, $b_0 = b_0 (d) > 0$  and $\nzerobar = \nzerobar (A, d, t)  \in \N$ such that if $n \ge \nzerobar$ then
\begin{equation} \label{nonunif-ldt-eq}
\babs{  \{ x \in \T^d \colon \abs{ \frac{1}{n} \log \norm{\An{n} (x)} - \LE{n}_1  (A)  } >  n^{-a_0} \} } < e^{- n^{b_0}}.
\end{equation}
 \end{theorem}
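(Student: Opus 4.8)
\emph{Proof proposal.} The plan is to transfer the fiber LDT from the reduced maximal-rank cocycle and from the one-dimensional cocycle appearing in the semi-conjugacy, to $A$ itself. Since $L_1(A) > -\infty$, Corollary~\ref{coro zero rank} gives $\rank{A} = k \ge 1$, so Proposition~\ref{reduction-thm} applies: fixing a parameter $a = a(d) \in (0,1)$ to be specified at the end, I obtain a non-identically singular cocycle $R_A \in \qpcmatk{d}$ with $\det [ R_A (x) ] \not \equiv 0$, a one-dimensional cocycle $h_A \in \analyticf{d}$ with $h_A \not\equiv 0$, and constants $\ldtmeas_0 = \ldtmeas_0(A) < \infty$, $S = S(A) < \infty$ such that for all $x \in \T^d$ and all $n \ge m^{6/a}$,
$$\un{n}{A}(x) = \un{n}{R_A}(x) - \un{n}{h_A}(x) + r_n(x),$$
where $\abs{ \{ x \in \T^d \colon \abs{r_n(x)} > \ldtmeas_0 \, n^{-a/3} \} } < S \, e^{-n^{1-a}}$ and $\norm{r_n}_{L^2 (\T^d)} \le \ldtmeas_0 \, n^{-a/3}$.

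Next I would invoke the fiber LDT estimates for non-identically singular cocycles established in Chapter~6 of \cite{DK-book}, applied separately to the $k$-dimensional cocycle $R_A$ and to the one-dimensional cocycle $h_A$, both over the Diophantine translation $\om \in \DC_t$. This produces exponents $a' = a'(d) > 0$, $b' = b'(d) > 0$ and a threshold $N = N(A,t)$ such that for $n \ge N$,
$$\babs{ \{ x \in \T^d \colon \abs{ \un{n}{R_A}(x) - \avg{\un{n}{R_A}} } > n^{-a'} \} } < e^{-n^{b'}}$$
together with the analogous estimate for $\un{n}{h_A}$, where $\avg{\un{n}{R_A}} = \LE{n}_1(R_A)$ and $\avg{\un{n}{h_A}} = \LE{n}_1(h_A)$. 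The point to be checked here is that these exponents are \emph{dimension-dependent only}: all dependence on $R_A$, $h_A$, the ranks $k$ and $m$, and on the arithmetic constant $t$ is absorbed into the threshold $N$ and into harmless multiplicative constants. This is precisely the form in which the LDT is stated in \cite{DK-book}, and it is the main structural input on which the conclusions $a_0 = a_0(d)$, $b_0 = b_0(d)$ will rest.

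To recenter the estimate at $\LE{n}_1(A) = \avg{\un{n}{A}}$, I would integrate the decomposition over $\T^d$, obtaining
$$\LE{n}_1(A) = \avg{\un{n}{R_A}} - \avg{\un{n}{h_A}} + \avg{r_n}, \qquad \abs{\avg{r_n}} \le \norm{r_n}_{L^1 (\T^d)} \le \norm{r_n}_{L^2 (\T^d)} \le \ldtmeas_0 \, n^{-a/3}$$
(the Haar measure being a probability measure). Hence, for every $x$ outside the union of the three exceptional sets above — a set of measure $< S \, e^{-n^{1-a}} + 2\, e^{-n^{b'}}$ — writing $\un{n}{A}(x) - \LE{n}_1(A)$ as the sum of the three recentered terms and using the triangle inequality yields
$$\abs{ \un{n}{A}(x) - \LE{n}_1(A) } \le \abs{ \un{n}{R_A}(x) - \avg{\un{n}{R_A}} } + \abs{ \un{n}{h_A}(x) - \avg{\un{n}{h_A}} } + \abs{r_n(x)} + \abs{\avg{r_n}} \less n^{-a'} + \ldtmeas_0 \, n^{-a/3}.$$

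Finally I would do the bookkeeping: fix $a := \min\{1/2, 3a'\}$, so $a = a(d)$, and set $a_0 := \tfrac12 \min\{a', a/3\} = a_0(d)$ and $b_0 := \tfrac12 \min\{b', 1-a\} = b_0(d)$. There is then a threshold $\nzerobar = \nzerobar(A,d,t)$, larger than $m^{6/a}$ and than $N$, and large enough that for all $n \ge \nzerobar$ one has $\ldtmeas_0 \, n^{-a/3} + C\, n^{-a'} < n^{-a_0}$ and $S\, e^{-n^{1-a}} + 2\, e^{-n^{b'}} < e^{-n^{b_0}}$ (these lower bounds on $n$ depend only on the constants $\ldtmeas_0$, $S$ and the implicit constant $C$, hence on $A$, $d$, $t$), and for such $n$ the estimate~\eqref{nonunif-ldt-eq} follows. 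The only genuine difficulty in this argument is the control of parameter dependence — ensuring that nothing forces $a_0$ or $b_0$ to depend on $m$ or on $A$ — and this is handled by the uniformity in dimension of the black-box LDT from \cite{DK-book} together with the observation that the exceptional-set bound only needs to be \emph{sub-exponential}, so the $A$-dependent prefactor $S$ from Proposition~\ref{reduction-thm} is harmlessly swept into $\nzerobar$.
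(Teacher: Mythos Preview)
Your proposal is correct and follows essentially the same route as the paper: invoke the semi-conjugacy decomposition of Proposition~\ref{reduction-thm}, apply the black-box fiber LDT from \cite{DK-book} to the non-identically singular pieces $R_A$ and $h_A$, and combine via the triangle inequality after recentering at the means. Your write-up is in fact slightly more careful than the paper's in two places: you explicitly cite Corollary~\ref{coro zero rank} to justify $\rank{A}\ge 1$ from $L_1(A)>-\infty$, and you spell out the final parameter choices $a_0=a_0(d)$, $b_0=b_0(d)$ (the paper handles this implicitly by taking $a=1-b_0$ so that the remainder's exceptional-set exponent matches the LDT's).
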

 
 \begin{proof}
We apply Proposition~\ref{reduction-thm}. There are: a (maximal rank) cocycle $R_A \in \qpcmatk{d}$ with $\det [R_A (x)] \not \equiv 0$, a  function $h_A \in \analyticf{d}$ with $h_A (x) \not \equiv 0$ and small  remainder functions $r_n (x)$ (in the sense of estimates (i) and (ii))  such that for all phases $x \in \T^d$ and all iterates $n$ after a threshold,
 \begin{equation}\label{semi-conj-eq}
 \un{n}{A} (x) = \un{n}{R_A} (x) - \un{n}{h_A} (x) + r_n (x)  .
 \end{equation}

Since $R_A \in \qpcmatk{d}$ and $\det [R_A] \not \equiv 0$, the LDT estimate for {\em non-identically singular cocycles} applies (see Theorem 6.6 in~\cite{DK-book}.

Moreover, since we may regard $h_A$ as a one-dimensional quasi-periodic cocycle, and since $h_A \not \equiv 0$, the same result also applies to $h_A$.

Applying the aforementioned LDT estimate to the non-identically singular cocycles $R_A$ and $h_A$, we conclude that there are constants $ \ldtmeas_{0} = \ldtmeas_{0} (A, r) < \infty$, $a_0 = a_0 (d) > 0$, $b_0 = b_0 (d) > 0$  and $\nzerobar = \nzerobar (A, t) \in \N$ such that if $n \ge \nzerobar$ then
\begin{align*}
\babs{  \{ x \in \T^d \colon \abs{ \un{n}{R_A} (x) - \avg{ \un{n}{R_A} }  } > \ldtmeas_{0} \, n^{-a_0} \} } < e^{- n^{b_0}}\\
\babs{  \{ x \in \T^d \colon \abs{ \un{n}{h_A} (x) - \avg{ \un{n}{h_A} }  } > \ldtmeas_{0} \, n^{-a_0} \} } < e^{- n^{b_0}} .
\end{align*}

Then for any phase $x$ outside of these two exceptional sets, by~\eqref{semi-conj-eq} we have:
\begin{align*}
\babs{ \un{n}{A} (x) - \avg{ \un{n}{A} } } & \le  \babs{ \un{n}{R_A} (x) - \avg{ \un{n}{R_A} } } \\ 
&  +  \babs{ \un{n}{h_A} (x) - \avg{ \un{n}{h_A} }  } + \abs{ r_n (x)  - \avg{ r_n} } \\
& \le \ldtmeas_{0} \, n^{-a_0} + \ldtmeas_{0} \, n^{-a_0} + \abs{r_n (x)} + \abs{ \avg{r_n} }  .
\end{align*}

Using the estimates (i) and (ii) on the remainder $r_n$ in Theorem~\ref{reduction-thm}, after further excluding another sub-exponentially small set of phases $x$, we conclude that
$$\babs{ \un{n}{A} (x) - \avg{ \un{n}{A} } } \less \ldtmeas_{0} \, n^{-a_0} + \ldtmeas_{0} \, n^{-(1-b_0)/3}  < n^{-a_0'} ,$$
for some $a_0' > 0$ and provided that $n \ge \nzerobar (A, d)$.  The theorem is now proven.
\end{proof}


\section{Estimates on separately subharmonic functions}\label{ssh-estimates}
\newcommand{\newu}{\widetilde{u}}
\newcommand{\umeas}{\mathscr{S}}
\newcommand{\Dif}{\mathcal{D}}

A function $u \colon \Omega \subset \C^d \to [ - \infty, \infty)$ is called {\em separately subharmonic} if it is continuous and subharmonic in each variable. Given an analytic cocycle $A \in \qpcmat{d}$, the functions $\un{n}{A} (z) := \frac{1}{n} \, \log \norm{\An{n} (z)}$ associated to its iterates are subharmonic along any complex line intersected with the domain $\strip_r^d$, so in particular they are separately subharmonic on $\strip_r^d$.

In this section we establish some general estimates on separately subharmonic functions $u (z)$ defined in a neighborhood of the torus. We later apply these estimates to the functions $\un{n}{A} (z)$. In all of our estimates we assume an upper bound on $u (z)$ throughout the whole domain, and a lower bound at some point on the torus. 

More precisely, throughout this section, we are given a separately subharmonic function $u \colon \strip_r^d \to [ - \infty, \infty)$ such that for some constant $C < \infty$ we have:

\begin{enumerate}
\item $u (z) \le C $ for all $z \in \strip_r^d$ \label{a1} ; \\
\item $u (x_0) \ge - C $ for some $x_0 \in \T^d$ \label{a2} .
\end{enumerate}

\smallskip

All the constants in the estimates derived here will depend only on $C$, $r$ and $d$, and not on the given function $u$ per se. 
Moreover, since the width $r$ of the annulus $\strip_r$ and the number $d$ of variables will be fixed throughout, the dependence of these estimates on $r$ and $d$ will eventually stop being emphasized.  

\smallskip

The estimates obtained will refer to the function $u (x)$ (the restriction of $u (z)$ to the torus $\T^d$) and they will be of  the following kinds: an $L^2$-bound, a boosting of a weak a-priori deviation from the mean, and a quantitative version of the Birkhoff ergodic theorem. These types of estimates were previously derived for {\em bounded} separately subharmonic functions (see \cite{B,GS-Holder,sK2}). 
They were also derived (see Section 6.2 in our monograph~\cite{DK-book}) for {\em unbounded} separately subharmonic functions that satisfy some uniform bounds along {\em every line} parallel to a coordinate axis, for instance if a lower bound as in item \eqref{a2} is available for some point on {\em every} such line. This requirement is crucial as for $d=1$ the argument relies on the Riesz representation theorem for subharmonic functions, while for $d>1$ (when the Riesz representation theorem is {\em not} available), these results are obtained by applying the one-variable estimates along lines parallel to the coordinate axes.

However, we (have to) assume the availability of item~\eqref{a2} only at one point on the torus (we may assume it on a larger set of points, yet not one that intersects every line parallel to a coordinate 
axis). Hence the aforementioned results are not immediately applicable. 

The idea is then to horizontally {\em truncate} the function $u(z)$ from below, at a sufficiently low level. The truncation is still separately subharmonic, but also {\em bounded}, so the kinds  of estimates we are interested in do apply.  Moreover, as  the next lemma shows in a quantitative way, a separately subharmonic function cannot be too small for too long, hence the function itself and its (low enough level) truncation differ only on a small set of inputs. Finally, we note that this approach does create asymptotically large constants in all estimates, however, they will be manageable when applied to the functions $\un{n}{A} (z)$, as part of an inductive process.

\smallskip

\begin{lemma}\label{Cartan-type-ssh}

There are constants $\ga = \ga (d) > 0$ and $K_{r, d} < \infty$ such that for all $T \ge K_{r,d} \, C^2$ we have
\begin{equation}\label{Cartan-type-eq}
\abs{ \{ x \in \T^d \colon u (x) < - T \}   } \le e^{ - T^\ga} .
\end{equation}
\end{lemma}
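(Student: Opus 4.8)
The plan is to reduce the statement to a one-variable Cartan-type estimate on logarithmic potentials, applied along lines parallel to a coordinate axis, and then combine these slice-wise estimates via Fubini. First I would recall the classical fact (Cartan's estimate, as used in~\cite{B,GS-Holder}): if $v$ is subharmonic on a neighborhood of the closed annulus $\strip_r$ with $v \le C_1$ there, then either $v \ge -T$ on most of $\T$, or else $v$ is uniformly very negative on a large sub-annulus. More precisely, using the Riesz representation $v(z) = \int \log\sabs{z-\zeta}\,d\mu(\zeta) + (\text{harmonic})$ on a slightly smaller annulus, the set $\{x \in \T : v(x) < -T\}$ can be covered by finitely many arcs whose total length is at most $e^{-cT/\text{(total mass of }\mu)}$; and the total Riesz mass is controlled by the oscillation of $v$, hence by $C_1$ (together with the value $-C_1$ being attained, to pin down the harmonic part). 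This gives, for a single subharmonic function $v$ of one variable with $\sup v \le C_1$ and $v$ not identically close to $-\infty$, an estimate of the form $\sabs{\{x : v(x) < -T\}} \le e^{-c T / C_1}$ for $T$ large relative to $C_1$.

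The main work is propagating the lower bound from the single point $x_0 \in \T^d$ through all coordinate directions so that the one-variable estimate can actually be invoked on a full-measure set of slices. Write $x_0 = (x_0^1,\dots,x_0^d)$. The function $x^1 \mapsto u(x^1, x_0^2,\dots,x_0^d)$ is subharmonic in one variable, bounded above by $C$, and at least $-C$ at $x^1 = x_0^1$; so the one-variable Cartan estimate applies and shows that $u(\cdot, x_0^2,\dots,x_0^d) \ge -T/2$, say, on a subset of $\T$ of measure $\ge 1 - e^{-c'T/C}$. In particular, there is a positive-measure set of values $x^1$ for which $u(x^1, x_0^2,\dots,x_0^d) \ge -T/2$; fixing such an $x^1$ and now varying $x^2$, we again have a one-variable subharmonic function bounded above by $C$ with a point where it is $\ge -T/2 \ge -C$ once $T$ is large, so Cartan applies again. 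Iterating this $d$ times (a clean way to organize it: at each stage use the one-variable estimate plus the pigeonhole/positive-measure argument to find an admissible value of the next coordinate for which the lower bound propagates), one shows that the set
$$
\Big\{ (x^1,\dots,x^{d-1}) \in \T^{d-1} : u(x^1,\dots,x^{d-1}, \cdot) \not\equiv -\infty \text{ and } \sup_{\strip_r} u(x^1,\dots,x^{d-1},\cdot) \text{-controlled} \Big\}
$$
has full measure in $\T^{d-1}$, and moreover that for a.e. such slice one has a lower bound on $u(x^1,\dots,x^{d-1},y)$ at some point $y\in\T$ that is polynomial (or at worst a fixed power) in $T$ — this is what forces the exponent $\ga = \ga(d)$ rather than $1$, since each of the $d$ layers degrades the gap by a bounded factor, turning the final threshold for the last-variable Cartan estimate into something like $T^{1/d}$.

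Once a slice-wise lower bound of the form $u(x^1,\dots,x^{d-1}, y_0) \ge -T^{1-\epsilon}$ (for a suitable point $y_0$ depending on the slice) is available on a full-measure set of $(x^1,\dots,x^{d-1})$, I would apply the one-variable Cartan estimate in the last variable to each such slice: the slice function $u(x^1,\dots,x^{d-1},\cdot)$ is subharmonic on $\strip_r$, bounded above by $C$, and $\ge -T^{1-\epsilon}$ somewhere, so
$$
\abs{\{ x^d \in \T : u(x^1,\dots,x^{d-1},x^d) < -T \}} \le e^{-c\,(T - T^{1-\epsilon})/C} \le e^{-c''T}
$$
for $T$ large relative to $C^2$ (the $C^2$ rather than $C$ comes from the compounding of the "gap divided by mass" ratios across the $d$ layers). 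Integrating this bound over $(x^1,\dots,x^{d-1}) \in \T^{d-1}$ via Fubini — and absorbing the full-measure exceptional set of slices, which itself has measure bounded by the cumulative $e^{-c'T^{\ga}}$-type errors from the propagation stage — yields
$$
\abs{\{ x \in \T^d : u(x) < -T \}} \le e^{-T^{\ga}}
$$
for all $T \ge K_{r,d}\,C^2$, with $\ga = \ga(d) \in (0,1)$. The constants at each stage depend only on $r$, $d$ and $C$, as required.

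I expect the main obstacle to be the bookkeeping in the inductive propagation of the lower bound: one must be careful that at each coordinate layer the "good" set of coordinate values on which the bound propagates is large enough (positive measure, quantitatively $\ge 1 - e^{-cT^{\text{power}}/C}$) that the overall exceptional set after $d$ layers is still dominated by $e^{-T^\ga}$, and that the degradation of the lower-bound gap is only by a bounded multiplicative/additive amount per layer (so it remains $\ge -T^{1-\epsilon}$ rather than collapsing). A cleaner alternative, which I would also consider, is to bypass the iteration by using the known unbounded-case estimates of~\cite{DK-book} as a black box on those slices where a lower bound is a priori available, and handle the remaining slices by the Fubini/Cartan argument above; but the self-contained inductive route sketched here is the more robust one and avoids circularity with the later sections.
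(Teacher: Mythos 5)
Your route — propagating the lower bound at $x_0$ layer by layer through one-variable Cartan estimates and Fubini — is plausible and roughly mirrors the proof of the key technical input, but the paper does not re-derive that input. It cites a conditional estimate from \cite{GS-fine}: under assumption (1) alone, if $\abs{\{x \in \T^d\colon u(x) < -T\}} > \delta$ then $\sup_{\T^d} u \le C_{r,d}\,C - T / \bigl( C_{r,d}\,\log^d(C_d/\delta)\bigr)$. (The $d=1$ case is a direct Cartan/Riesz argument; $d>1$ follows by Fubini and induction on $d$.) With this in hand, the lemma is a short contrapositive: fix $\gamma < 1/(2d)$ and suppose $\abs{\{u < -T\}} > C_d\, e^{-T^\gamma}$; then with $\delta = C_d\, e^{-T^\gamma}$ the cited estimate gives $\sup_{\T^d} u \le C_{r,d}\,C - T^{1-\gamma d}/C_{r,d} \le C_{r,d}\,C - T^{1/2}/C_{r,d}$, which is $< -C$ once $T \ge K_{r,d}\,C^2$, contradicting assumption (2). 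This organization invokes the point $x_0$ only at the very end and so avoids tracking a degrading lower bound and a ladder of thresholds through the coordinate layers.

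Your inductive route can be made to work, but the bookkeeping as written has a concrete error. In the displayed per-slice estimate you use $C$ in the denominator, asserting a bound of the form $e^{-c(T - T^{1-\epsilon})/C}$. That is not what the one-variable Cartan estimate gives: the Riesz mass of the one-variable subharmonic restriction on a sub-annulus is controlled by its \emph{oscillation} — the gap between the upper bound and the value at the anchor point — and once the anchor value has degraded to $-T^{1-\epsilon}$ that oscillation is of order $T^{1-\epsilon}$, not $C$. The correct per-slice bound is therefore $\sim e^{-cT/T^{1-\epsilon}} = e^{-cT^\epsilon}$, not $e^{-c''T}$. Your own prose anticipates exactly this ("this is what forces the exponent $\gamma = \gamma(d)$ rather than $1$"), but the displayed inequality contradicts it. With the denominator corrected, and with the intermediate thresholds chosen as a geometric ladder $T_j \asymp C\,(T/C)^{j/d}$ so that each ratio $T_{j+1}/T_j = (T/C)^{1/d}$ is uniformly large once $T \ge K_{r,d}\,C^2$, both the cumulative exceptional set and the last-slice Cartan bound come out of size $\lesssim e^{-c(T/C)^{1/d}}$, which does give $e^{-T^\gamma}$ for $\gamma < 1/(2d)$. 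So the conclusion is reachable by your method, but not via the estimate as displayed, and it amounts to re-proving the \cite{GS-fine} input that the paper simply cites.
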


\begin{proof} 
This result (in a slightly different formulation) was already established in \cite{GS-fine}. The formulation in \cite{GS-fine} says:  if $u$ satisfies the upper bound $u (z) \le C$ for all $z \in \strip_r^d$, then there are constants $C_d$ and $C_{r,d}$, such that if for some $T < \infty$ and $\delta \in (0, 1)$ we have
$$\abs{ \{ x \in \T^d \colon u (x) < - T \}   } >  \delta ,$$
then
\begin{equation}\label{ssh-proof-eq1}
\sup  \, \{ u (x) \colon x \in \T^d \} \le C_{r,d} \, C - \frac{T}{C_{r,d} \, \log^d (C_d / \delta)} \, .
\end{equation}

When $d=1$, this is a direct consequence of Cartan's estimate (see Section 11.2 in \cite{Levin}) for logarithmic potentials and the Riesz representation theorem for subharmonic functions. When $d > 1$ it follows from the one-variable result and an inductive argument on $d$, using Fubini.

Fix $0 < \ga < \frac{1}{2 d}$. If  $\abs{ \{ x \in \T^d \colon u (x) < - T \}   } > C_d \, e^{ - T^\ga}$, then applying \eqref{ssh-proof-eq1} with $\delta := C_d \, e^{ - T^\ga}$, we conclude that
$$ \sup  \, \{ u (x) \colon x \in \T^d \} < C_{r,d} \, C - \frac{1}{C_{r,d}} \, T^{1/2} \le - C ,$$
provided $T$ is large enough (i.e. $T \ge  (C_{r, d} (C_{r, d}+1))^2 \, C^2 =: K_{r, d} \, C^2$), contradicting the assumption~\eqref{a2} that $u (x_0) \ge - C$ for some $x_0 \in \T^d$.
\end{proof}

\begin{lemma}\label{L2bound-ssh}
The function $u (x)$ is in $L^2$:
\begin{equation}\label{L2bound-hole}
\norm{ u }_{L^2 (\T^d)} \less C^2 \, ,
\end{equation}
with the underlying constant depending on $r$ and $d$.
\end{lemma}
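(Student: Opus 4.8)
The plan is to obtain the $L^2$-bound from the distribution function of $u$ via the layer-cake formula, using the upper bound~\eqref{a1} to control the positive part of $u$ and the Cartan-type estimate of Lemma~\ref{Cartan-type-ssh} to control the lower tail. Since the conclusion is homogeneous, of the form $\norm{u}_{L^2(\T^d)} \less C^2$ with implied constant depending only on $r$ and $d$, there is no loss in assuming $C \ge 1$: if the given $C$ is smaller, both~\eqref{a1} and~\eqref{a2} still hold with $C$ replaced by $1$.

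First I would write, using the layer-cake representation of the $L^2$-norm,
$$\norm{u}_{L^2(\T^d)}^2 = \int_{\T^d} u(x)^2 \, dx = \int_0^\infty 2 t \, \babs{ \{ x \in \T^d \colon \sabs{u(x)} > t \} } \, dt \le \int_0^\infty 2 t \, \babs{ \{ u > t \} } \, dt + \int_0^\infty 2 t \, \babs{ \{ u < - t \} } \, dt .$$
By assumption~\eqref{a1} we have $u(x) \le C$ for all $x \in \T^d$, so $\{ u > t \} = \emptyset$ whenever $t > C$; since $\babs{ \{ u > t \} } \le 1$ always, the first integral is at most $\int_0^C 2 t \, dt = C^2$.

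For the second integral I would split the $t$-range at the threshold $T_0 := K_{r,d} \, C^2$ furnished by Lemma~\ref{Cartan-type-ssh}. On $[0, T_0]$ we simply bound $\babs{ \{ u < - t \} } \le 1$, which contributes at most $\int_0^{T_0} 2 t \, dt = T_0^2 = K_{r,d}^2 \, C^4$. On $[T_0, \infty)$, Lemma~\ref{Cartan-type-ssh} gives $\babs{ \{ u < - t \} } \le e^{-t^\ga}$ with $\ga = \ga(d) > 0$, so this part contributes at most $\int_0^\infty 2 t \, e^{-t^\ga} \, dt =: c_0(d) < \infty$. Collecting the three pieces,
$$\norm{u}_{L^2(\T^d)}^2 \le C^2 + K_{r,d}^2 \, C^4 + c_0(d) \le \big( 1 + K_{r,d}^2 + c_0(d) \big) \, C^4 ,$$
using $C \ge 1$; taking square roots yields $\norm{u}_{L^2(\T^d)} \less C^2$ as claimed.

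I do not expect any genuine obstacle here: the entire analytic content has already been extracted in Lemma~\ref{Cartan-type-ssh}, and what remains is a routine layer-cake computation together with some bookkeeping of additive constants. The only points requiring (minor) care are the convergence of the tail integral $\int_0^\infty 2 t \, e^{-t^\ga} \, dt$, which holds for every $\ga > 0$, and the normalization $C \ge 1$ used to absorb the lower-order terms into the $C^4$ bound.
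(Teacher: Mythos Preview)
Your proof is correct and follows essentially the same approach as the paper: both use the layer-cake representation of the $L^2$-norm, bound the positive part via assumption~\eqref{a1}, and control the negative tail by splitting at the threshold where Lemma~\ref{Cartan-type-ssh} becomes applicable. The only cosmetic difference is that the paper writes the layer-cake formula in terms of the distribution function of $u^2$ rather than of $\sabs{u}$, which amounts to a change of variables.
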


\begin{proof}
This is an easy consequence of the previous lemma. Indeed,
\begin{align*}
\norm{u}^2_{L^2 (\T^d)} & = \int_{\T^d} \, \abs{u (x) }^2 \, d x = \int_0^\infty \, \abs{ \{ x \in \T^d \colon \abs{u (x)}^2  >  T \}   } \, d T \\
& =  \int_0^{C^\ast} \, \abs{ \{ x  \colon \abs{u (x)}^2  >  T \}   } \, d T + \int_{C^\ast}^\infty \, \abs{ \{ x  \colon \abs{u (x)}^2  >  T \}   } \, d T \, ,
\end{align*}
where we choose $C^\ast := \max \{C^2, (K_{r, d} \, C^2)^2 \}$.

Note that $$\int_0^{C^\ast} \, \abs{ \{ x  \colon \abs{u (x)}^2  >  T \}   } \, dT \le C^\ast \less C^4 .$$

We  estimate the second integral using Lemma~\ref{Cartan-type-ssh}. Note that since $u (x) \le C$ for all $x \in \T^d$, and since $\sqrt{C^\ast} \ge C$, if $\abs{u (x)}^2  >  T$ and $T \ge C^\ast$ then we must have that $u (x) < - \sqrt{T}$. Hence
\begin{align*}
 \int_{C^\ast}^\infty \, \abs{ \{ x  \colon \abs{u (x)}^2  >  T \}   } \, d T  & =  \int_{C^\ast}^\infty \, \abs{ \{ x  \colon u (x)  < -  \sqrt{T} \}   } \, d T \\
 & \le \int_0^\infty e^{- \sqrt{T}^\ga} \, d T \less 1 ,
 \end{align*}
 where the value of the last integral depends on $\ga$, so on $d$ only.
\end{proof}

\begin{lemma}\label{splitting-ssh}
There are some (explicit) constants $p = p (d) < \infty$, $\alpha = \alpha (d) > 0$ and $\beta = \beta (d) > 0$, such that if the weak a-priori estimate 
\begin{equation}\label{splitting-ssh-weak}
\abs{ \{ x \in \T^d \colon  \abs{ u (x) - \avg{u} }  > \ep _0 \} } < \ep_1
\end{equation}
holds for some $\ep_0, \ep_1 > 0$ small enough, with $\ep_1 \le \ep_0^p$ and $\ep_0 \less C^{-2}$, then the following sharper deviation estimate also holds
\begin{equation}\label{splitting-ssh-strong}
\abs{ \{ x \in \T^d \colon \abs{  u (x) - \avg{u} } >  \ep _0^{\alpha} \} }
 < e^{- \ep_0^{- \beta}} .
 \end{equation}
\end{lemma}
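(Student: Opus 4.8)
The plan is to follow the philosophy already established in this section: horizontally truncate $u$ from below so as to reduce to the case of a \emph{bounded} separately subharmonic function, invoke the boosting ("splitting") estimate known in the bounded setting (see \cite{B,GS-Holder,sK2} for $d=1$ and, via Fubini along coordinate lines, Section 6.2 of \cite{DK-book} for $d>1$), and then transfer the sharpened conclusion back to $u$ itself. The price of the truncation is that the relevant uniform bound inflates from $C$ to roughly $C^2$, and this is exactly why the hypothesis demands $\ep_0 \less C^{-2}$.

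Concretely, I would set $T := K_{r,d}\,C^2$ with $K_{r,d}$ as in Lemma~\ref{Cartan-type-ssh}, and let $\newu := \max\{u, -T\}$. Then $\newu$ is again continuous and subharmonic in each variable, now with $-T \le \newu(z) \le C$ on $\strip_r^d$, hence $\abs{\newu} \less C^2$ throughout; moreover, by Lemma~\ref{Cartan-type-ssh},
\[ \babs{ \{ x \in \T^d \colon u(x) \neq \newu(x) \} } = \babs{ \{ x \in \T^d \colon u(x) < -T \} } \le e^{-T^\ga} . \]
Next I would compare the means: since $\newu \ge u$ pointwise, $\avg{\newu} \ge \avg{u}$, while using Lemma~\ref{L2bound-ssh} (the $L^2$-bound) together with Cauchy--Schwarz and Lemma~\ref{Cartan-type-ssh},
\[ 0 \le \avg{\newu} - \avg{u} = \int_{\{u < -T\}} (\newu - u)\, dx \le \norm{u}_{L^2(\T^d)} \, \babs{ \{ u < -T \} }^{1/2} \less C^2\, e^{-T^\ga/2} , \]
which is far smaller than any power of $\ep_0$ in the admissible range $\ep_0 \less C^{-2}$. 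Combining these two facts with the hypothesis~\eqref{splitting-ssh-weak} transfers the weak a-priori estimate to $\newu$, say
\[ \babs{ \{ x \in \T^d \colon \abs{\newu(x) - \avg{\newu}} > 2\ep_0 \} } \le \babs{ \{ u \neq \newu \} } + \babs{ \{ \abs{u - \avg{u}} > \ep_0 \} } \less \ep_1 , \]
and here one checks that $\ep_1$ (enlarged, if necessary, to $\max\{\ep_1, e^{-T^\ga}\}$) is still bounded by $(2\ep_0)^{p}$ for a suitable $p = p(d)$, precisely because $\ep_1 \le \ep_0^p$ and $T \asymp C^2$ with $\ep_0 \less C^{-2}$. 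Since the uniform bound for $\newu$ is $\less C^2$ and $2\ep_0 \less C^{-2} \less (C^2)^{-1}$, all hypotheses of the bounded-case boosting lemma are met.

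Applying that lemma to $\newu$ produces $\alpha = \alpha(d) > 0$ and $\beta = \beta(d) > 0$ with $\babs{ \{ \abs{\newu - \avg{\newu}} > \ep_0^{\alpha} \} } < e^{-\ep_0^{-\beta}}$. Finally, off of this set together with $\{u \ne \newu\}$ — whose total measure is $< e^{-\ep_0^{-\beta}} + e^{-T^\ga} < e^{-\ep_0^{-\beta'}}$ after shrinking $\beta$ slightly — one has $\abs{u(x) - \avg{u}} \le \abs{\newu(x) - \avg{\newu}} + \abs{\avg{\newu} - \avg{u}} < \ep_0^{\alpha} + C^2 e^{-T^\ga/2} < \ep_0^{\alpha'}$, which is exactly~\eqref{splitting-ssh-strong} with $\alpha', \beta'$ depending only on $d$. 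The part I expect to require the most care is not any single step but the bookkeeping of constants through the truncation: one must verify that the inflation of the uniform bound from $C$ to $\asymp C^2$ is absorbed precisely by the hypothesis $\ep_0 \less C^{-2}$, and that every parasitic set introduced by the truncation (of size $e^{-T^\ga}$ with $T \asymp C^2$) is genuinely dominated first by $\ep_1$ and then by $e^{-\ep_0^{-\beta}}$; this is where the interplay of the conditions $\ep_1 \le \ep_0^p$ and $\ep_0 \less C^{-2}$ is used and what pins down the exponent $p = p(d)$ (one takes $p$ at least as large as the exponent appearing in the bounded-case lemma).
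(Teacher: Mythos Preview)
Your overall strategy—truncate, apply the bounded-case boosting, transfer back—is exactly the paper's. However, your choice of truncation level $T := K_{r,d}\,C^2$ is \emph{fixed}, independent of $\ep_0$, and this makes several of your estimates fail. The hypothesis $\ep_0 \less C^{-2}$ only gives an \emph{upper} bound on $\ep_0$; in the application (Section~\ref{ufiber-ldt}) the constant $C$ stays bounded while $\ep_0 = n_1^{-a/3}$ becomes arbitrarily small. With $T \asymp C^2$ fixed, the parasitic set has measure $e^{-T^\ga}$, a \emph{fixed} positive number, so your claim that it is ``genuinely dominated first by $\ep_1$ and then by $e^{-\ep_0^{-\beta}}$'' is false once $\ep_0$ is small enough. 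Likewise, your mean-comparison bound $C^2 e^{-T^\ga/2}$ is a fixed number and cannot be ``far smaller than any power of $\ep_0$''; and enlarging $\ep_1$ to $\max\{\ep_1, e^{-T^\ga}\}$ destroys the inequality $\ep_1 \le (2\ep_0)^p$ for small $\ep_0$.

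The fix, which is what the paper does, is to truncate at the $\ep_0$-dependent level $-\ep_0^{-1}$ instead of $-K_{r,d}C^2$. The hypothesis $\ep_0 \less C^{-2}$ is then used precisely to guarantee $\ep_0^{-1} \ge K_{r,d}C^2$, so that Lemma~\ref{Cartan-type-ssh} applies at this level; the parasitic set now has measure $e^{-\ep_0^{-\ga}}$, which is of the right form to be absorbed into $e^{-\ep_0^{-\beta}}$. The uniform bound on $\newu$ inflates to $\ep_0^{-1}$ rather than $C^2$, and this is what forces the explicit exponent $p$ (e.g.\ $p=14$ when $d=2$) through the $\mathscr{S}$-dependence in the bounded-case lemma.
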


\begin{proof} Let
$$\newu (z) := \max \, \{ u (z), \, - \ep_0^{-1} \} \quad \text{for all } z \in \strip_r^d \, .$$

Then $\newu$ is separately subharmonic  and $- \ep_0^{-1} \le \newu (z) \le C < \ep_0^{-1}$, so 
$$\abs{\newu (z) } \le \ep_0^{-1} \quad \text{for all } z \in \strip_r^d  \, .$$

By Lemma~\ref{Cartan-type-ssh}, there is $\ga = \ga (d) > 0$ such that if we denote
$$\mathcal{D} := \{ x \in \T^d \colon u (x) < - \ep_0^{-1} \} \, ,$$
and since $\ep_0^{-1} \more C^{-2}$, then $\abs{ \mathcal{D} } < e^{- \ep_0^{-\ga}}$ and $u (x) = \newu (x) $  for all   $x \in \mathcal{D}\comp$.   

Note that by Lemma~\ref{L2bound-ssh}, $\norm{ u }_{L^2 (\T^d)} \less C^2 < \ep_0^{-1}$ and clearly $\norm{ \newu }_{L^2 (\T^d)} \le \norm{ \newu }_{L^\infty (\T^d)} \le   \ep_0^{-1}$. Then by Cauchy-Schwarz, 
\begin{align*}
\abs{ \avg{u} - \avg{\newu}  } & \le \int_{\T^d} \abs{ u (x) - \newu (x)  } \, d x = \int_{\mathcal{D}} \abs{ u (x) - \newu (x)  } \, d x \\
& \le ( \norm{ u }_{L^2 (\T^d)} + \norm{ \newu }_{L^2 (\T^d)} ) \cdot \abs{ \mathcal{D} }^{1/2} \less \ep_0^{-1} \cdot e^{- \frac{1}{2} \ep_0^{-\ga}} < \ep_0 ,
\end{align*} 
provided $\ep_0$ is small enough (depending on $\ga$, hence on $d$). 

Let $\B :=  \{ x \in \T^d \colon  \abs{ u (x) - \avg{u} }  > \ep _0 \}$. Then if  $x \notin \B \cup \mathcal{D}$ we have
\begin{align*}
\abs{ \newu (x) - \avg{ \newu}  } & \le  \abs{ \newu (x) - u (x)  }  + \abs{ u (x) - \avg{ u}  }  + \abs{ \avg{u} - \avg{ \newu}  } \\
& \le 0 + \ep_0 + \ep_0 = 2 \, \ep_0 .
\end{align*} 

We may of course assume that $\ep_1 > e^{- \ep_0^{-\ga}}$, otherwise there is nothing to prove. Then
\begin{equation}\label{newu-weak}
\abs{ \{ x \in \T^d \colon  \abs{ \newu (x) - \avg{ \newu} }  > 2 \ep _0 \} } \le \abs{\B} + \abs{\mathcal{D}} \le \ep_1 + e^{- \ep_0^{-\ga}} \le 2 \ep_1 .
\end{equation}

We conclude that $\newu (z)$ is a {\em bounded} separately subharmonic function, with a (weak) a-priori deviation from the mean bound like \eqref{splitting-ssh-weak}. As mentioned earlier, using Lemma 4.12 in \cite{B}, \eqref{newu-weak},  can be boosted to a stronger estimate. We refer below to  our more explicit formulation of this boosting (see Lemma 6.10 in \cite{DK-book}). 

When $d=2$ and if $\ep_1 \le \ep_0^8$, for an absolute constant $c > 0$ we get
$$\abs{ \{ x \in \T^2  \colon \abs{ \newu (x) - \avg{ \newu } }  > (2 \ep_0)^{1/4} \} } < e^{- c \bigl[ (2 \ep_0)^{1/4} + \umeas \, (2 \ep_1)^{1/8} \, (2 \ep_0)^{-1/2}  \bigr] ^{-1}} ,$$
where the constant $\umeas$ is a multiple (depending on $r$) of some uniform bounds on $u (z)$. Thus in our case $\umeas \less \ep_0^{-1}$, and assuming $\ep_1 \le \ep_0^{14}$, 
$$ e^{- c \bigl[ (2 \ep_0)^{1/4} + \umeas \, (2 \ep_1)^{1/8} \, (2 \ep_0)^{-1/2}  \bigr] ^{-1}}  < e^{- c \, \ep_0^{-1/4}} <  e^{- \ep_0^{-1/5}}  .$$

We conclude that if $d=2$ then
$$\abs{ \{ x \in \T^2  \colon \abs{ \newu (x) - \avg{ \newu } }  > (2 \ep_0)^{1/4} \} } <  e^{- \ep_0^{-1/5}}  .$$
A similar argument works for any $d$, where $1/4$ is replaced by $1/2d$ etc.

Therefore, the conclusion \eqref{splitting-ssh-strong} holds for the truncation $\newu$: for some powers $p < \infty$ and $\alpha, \beta > 0$, all depending only on $d$,
$$\abs{ \{ x \in \T^d \colon \abs{  \newu (x) - \avg{ \newu} } >  \ep _0^{\alpha} \} }
 < e^{- \ep_0^{- \beta}} .$$
 
 It follows that a similar estimate holds for the original function $u$. 
 
 Let $\B^\sharp :=  \{ x \in \T^d \colon \abs{  \newu (x) - \avg{ \newu} } >  \ep _0^{\alpha} \}$.
 Then 
 $$\abs{\B^\sharp \cup \mathcal{D} } \le  \abs{\B^\sharp} + \abs{\mathcal{D}} \le e^{- \ep_0^{- \beta}} + e^{- \ep_0^{- \ga}} ,$$ 
 and if $x \notin \B^\sharp \cup \mathcal{D}$, then
\begin{align*}
\abs{ u (x) - \avg{ u}  } & \le  \abs{ u (x) - \newu (x)  }  + \abs{ \newu (x) - \avg{ \newu}  }  + \abs{ \avg{\newu} - \avg{ u}  } \\
& \le 0 + \ep_0^{\alpha} + \ep_0 \less \ep_0^{\alpha} .
\end{align*} 
\end{proof}

We are now ready to formulate and to prove a quantitative version of Birkhoff's ergodic theorem for separately subharmonic functions satisfying the bounds in items~\eqref{a1} and \eqref{a2}.

\begin{theorem}\label{qBET-ssh-thm} Let $u \colon \strip_r^d \to [ - \infty, \infty)$ be a separately subharmonic function such that for some constant $C < \infty$ we have
\begin{enumerate}
\item $u (z) \le C $ for all $z \in \strip_r^d$ \label{item1} ; \\
\item $u (x_0) \ge - C $ for some $x_0 \in \T^d$ \label{item2} .
\end{enumerate}
Let $\om \in {\rm DC}_t$ be a translation vector and denote by $S_n \, u (x) :=  \sum_{j=0}^{n-1} u (T^j x)$ the corresponding Birkhoff sums of $u (x)$.

There are constants $a=a(d)>0, b=b(d)>0, k=k(r, d) <\infty$ and $C_r < \infty$  such that for all $n \ge n_0 := k \cdot t^{-2}$ we have
\begin{equation}\label{qBET-ssh}
\abs{ \{ x \in \T^d \colon \abs{ \frac{1}{n} \, S_n \, u (x) - \avg{u}  } > \umeas \, n^{- a}  \} } <  e^{- n^b} , 
\end{equation}
where $\umeas = C_r \, C^2$.
\end{theorem}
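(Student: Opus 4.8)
\emph{Approach.} The plan is to reduce the statement to the already-known quantitative Birkhoff theorem for \emph{bounded} separately subharmonic functions, by horizontally truncating $u$ from below at a level that grows slowly in $n$. Fix an exponent $\ep = \ep (d) \in (0,1)$ to be pinned down below, put $T_n := \umeas\, n^{\ep}$ with $\umeas := C_r\, C^2$, where $C_r = C_r(r,d) < \infty$ is chosen at least as large as the constant $K_{r,d}$ of Lemma~\ref{Cartan-type-ssh} (and we may assume $C \ge 1$, $C_r \ge 1$), and define
\[ \newu_n (z) := \max\{\, u (z),\, -T_n\,\} \qquad (z \in \strip_r^d) . \]
Then $\newu_n$ is separately subharmonic, and $\sabs{\newu_n (z)} \le T_n$ throughout $\strip_r^d$ (since $T_n \ge C$). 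Since $T_n \ge K_{r,d}\, C^2$, Lemma~\ref{Cartan-type-ssh} bounds the bad set $\Dif_n := \{ x \in \T^d \colon u (x) < -T_n \}$ by $\sabs{\Dif_n} \le e^{-T_n^{\ga}} \le e^{-n^{\ep \ga}}$, while $u$ and $\newu_n$ coincide on $\Dif_n\comp$.

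\emph{Bounded qBET for the truncation.} I would then invoke the quantitative Birkhoff ergodic theorem for \emph{bounded} separately subharmonic functions (see \cite{B,GS-Holder} and Section~6.2 of \cite{DK-book}): for $\om \in \DC_t$ there are $a' = a'(d) > 0$, $b' = b'(d) > 0$ and $k' = k'(r,d) < \infty$ such that
\[ \babs{ \{\, x \in \T^d \colon \sabs{ \tfrac1n S_n \newu_n (x) - \avg{\newu_n} } > c_{r,d}\, T_n\, n^{-a'} \,\} } < e^{-n^{b'}} \qquad \text{for all } n \ge k'\, t^{-2} . \]
Crucially, by scaling (apply the normalized statement to $\newu_n / T_n$), the threshold $k'\, t^{-2}$ and the exponents $a', b'$ do \emph{not} depend on the bound $T_n$, which enters only through the multiplicative factor $c_{r,d}\,T_n$. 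Choosing $\ep := a'/2$, setting $a := a'/2$, and absorbing $c_{r,d}$ into $C_r$, the deviation above becomes exactly $\umeas\, n^{-a}$.

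\emph{Transfer back to $u$.} It remains to control $\avg{u} - \avg{\newu_n}$ and $\tfrac1n S_n u - \tfrac1n S_n \newu_n$. Since $\newu_n \ge u$ pointwise and they agree off $\Dif_n$, Cauchy--Schwarz together with $\norm{u}_{L^2(\T^d)} \less C^2$ (Lemma~\ref{L2bound-ssh}) and $\norm{\newu_n}_{L^2(\T^d)} \le T_n$ gives $0 \le \avg{\newu_n} - \avg{u} = \int_{\Dif_n} (\newu_n - u) \less T_n\, e^{-\frac12 n^{\ep \ga}} \ll \umeas\, n^{-a}$ for $n$ past a $d$-dependent threshold. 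For the Birkhoff sums, $\sabs{ \tfrac1n S_n u (x) - \tfrac1n S_n \newu_n (x) } \le \tfrac1n S_n w (x)$ where $w := \sabs{u}\, \ind_{\Dif_n} \ge 0$, and by $T$-invariance $\int_{\T^d} \tfrac1n S_n w = \int_{\Dif_n} \sabs{u} \less C^2\, e^{-\frac12 n^{\ep \ga}}$, so Chebyshev's inequality bounds the measure of $\{ x \colon \tfrac1n S_n w (x) > \umeas\, n^{-a} \}$ by $\less C_r^{-1}\, n^{a}\, e^{-\frac12 n^{\ep \ga}} < e^{-n^{b''}}$ for a suitable $b'' = b''(d) > 0$. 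Combining the three contributions over the union of the two exceptional sets (of total measure $< e^{-n^{b'}} + e^{-n^{b''}} < e^{-n^b}$, with $b := \tfrac12 \min\{b', b''\}$, for $n$ large) yields $\sabs{ \tfrac1n S_n u (x) - \avg{u} } \le 3\,\umeas\, n^{-a}$ off that set; enlarging $C_r$ to absorb the factor $3$ and enlarging $k$ so that $n_0 = k\, t^{-2}$ dominates all the absolute thresholds encountered along the way gives \eqref{qBET-ssh}.

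\emph{Main obstacle.} The delicate point is the transfer of the Birkhoff \emph{sum} (not merely the mean) from the bounded truncation to $u$: along a single orbit, one visit to the region where $u$ is extremely negative could a priori spoil the average. This is defeated by using, in tandem, the Cartan-type Lemma~\ref{Cartan-type-ssh} (which forces $\Dif_n$ to be super-exponentially small) and the $L^2$-bound of Lemma~\ref{L2bound-ssh} (which keeps $\sabs{u}$ integrable on $\Dif_n$), so that even a crude Chebyshev bound on $\tfrac1n S_n w$ suffices, with no subharmonicity of $w$ required. The only remaining care is bookkeeping: one must check that every ``$n$ large enough'' step above costs only a lower bound on $n$ depending on $r$ and $d$, never on $C$ — which is precisely what keeps the final threshold of the asserted form $n_0 = k(r,d)\, t^{-2}$.
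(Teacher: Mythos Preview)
Your proof is correct and follows essentially the same truncation strategy as the paper. The only difference is in transferring the Birkhoff sum from $\newu_n$ back to $u$: where you invoke Chebyshev on $\tfrac{1}{n} S_n(|u|\,\ind_{\Dif_n})$, the paper simply observes that if $x \notin \bigcup_{j=0}^{n-1} T^{-j}\Dif_n$ (a set of measure $\le n\, e^{-n^{\ep\ga}}$) then $\tfrac{1}{n} S_n u(x) = \tfrac{1}{n} S_n \newu_n(x)$ exactly, which is a bit more direct and avoids the $L^2$ bound at that step.
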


\begin{proof}
We deduce this result from the corresponding one for {\em bounded} (a weaker assumption would suffice) separately subharmonic functions, by horizontal truncation. 

We refer to the more precise formulation of this result given by Theorem 6.5 in~\cite{DK-book}: if $v (z)$ is separately subharmonic on $\strip_r^d$ with some uniform bound $N$ (in particular, if $\abs{v (z) } \le N$ for all $z$), then for some constants $a, b > 0$ depending on $d$, $C_r < \infty$ and for all $n \ge t^{-2}$, we have
\begin{equation}\label{ssh-proof1}
\abs{ \{ x \in \T^d \colon \abs{ \frac{1}{n} \, S_n \, v (x) - \avg{v}  } > C_r \, N \, n^{- a}  \} } < e^{- n^b} 
\end{equation}

Let $k=k(r, d)$ be large enough (so that for instance, $k^{a/2} \ge K_{r, d}$, where $K_{r, d}$ is the constant from Lemma~\ref{Cartan-type-ssh}). Put $n_0 := k \cdot t^{-2}$.

Now fix (any) $n \ge n_0$ and define the truncation
$$ \newu (z) := \max \, \{ u (z), \, - C^2 \, n^{a/2} \} .$$

Then $\newu$ is separately subharmonic and {\em bounded} on $\strip_r^d$, that is, $\abs{\newu (z) } \le C^2 \, n^{a/2}$ (as we may of course assume that $C\ge1$). Hence \eqref{ssh-proof1} applies to $\newu$ with $N = C^2 \, n^{a/2}$ and we have
\begin{equation}\label{ssh-proof2}
\abs{ \{ x \in \T^d \colon \abs{ \frac{1}{n} \, S_n \, \newu (x) - \avg{\newu}  } >  \umeas \, n^{- a/2}  \} } < e^{- n^b} .
\end{equation}

On the other hand,  if $\mathcal{D} := \{ x \in \T^d \colon u (x) < - C^2 \, n^{a/2} \}$, and since $C^2 \, n^{a/2} \ge K_{r, d} C^2$, then by Lemma~\ref{Cartan-type-ssh} we have $\abs{\mathcal{D}} \le e^{- C^{2 \ga} \, n^{a \ga/2}} \le  e^{- n^{a \ga/2}}$. Moreover, if $x \notin \mathcal{D}$ then $u (x) = \newu (x)$.

Let  $$\Dif^{\flat} := \bigcup_{j=0}^{n-1} \, T^{-j} \, \Dif .$$
Then clearly $\abs{ \Dif^{\flat}} \le n \, e^{- n^{a \ga/2} } < e^{- n^{a \ga/4}}$, and if $x \notin \Dif^{\flat}$ then 
$$ \frac{1}{n} \, S_n \, u (x)  =  \frac{1}{n} \, S_n \, \newu (x) .$$ 

Moreover, as before, by Lemma~\ref{L2bound-ssh}, $\norm{ u }_{L^2 (\T^d)} \less C^2$ and clearly $\norm{ \newu }_{L^2 (\T^d)} \le \norm{ \newu }_{L^\infty (\T^d)} \le  C^2 \, n^{a/2}$. Then by Cauchy-Schwarz, 
\begin{align*}
\abs{ \avg{u} - \avg{\newu}  } & \le \int_{\T^d} \abs{ u (x) - \newu (x)  } \, d x = \int_{\mathcal{D}} \abs{ u (x) - \newu (x)  } \, d x \\
& \le ( \norm{ u }_{L^2 (\T^d)} + \norm{ \newu }_{L^2 (\T^d)} ) \cdot \abs{ \mathcal{D} }^{1/2} \le C^2 \,  n^{a/2} \, e^{- \frac{1}{2} \, n^{a \ga/2}} < C^2 e^{- n^{a \ga/4}} .
\end{align*}

Let $\B := \{ x \in \T^d \colon \abs{ \frac{1}{n} \, S_n \, \newu (x) - \avg{\newu}  } >  \umeas \, n^{- a/2}  \}$, so by \eqref{ssh-proof2} we have $\abs{\B} < e^{- n^b}$.

Then for any $x \notin \Dif^\flat \cup \B$, i.e. outside a sub-exponentially small set,
\begin{align*}
\abs{ \frac{1}{n} \, S_n \, u (x) - \avg{ u}  } & \le  \abs{ \frac{1}{n} \, S_n \, u (x) - \frac{1}{n} \, S_n \, \newu (x)  }  \\
&+ \abs{ \frac{1}{n} \, S_n \, \newu (x) - \avg{ \newu}  }  + \abs{ \avg{\newu} - \avg{ u}  } \\
& \le 0 +  \umeas \, n^{-a/2} + C^2 \, e^{- n^{a \ga/4}} < \umeas \, n^{- a/3}  ,
\end{align*} 
which completes the proof.
\end{proof}


\section{The proof of the uniform fiber LDT}\label{ufiber-ldt}
\newcommand{\rgap}{{\rm gr}}
\newcommand{\rift}{\rho}

In this section we present a new inductive procedure for establishing the uniform fiber LDT for quasi-periodic cocycles. This procedure might prove useful in other settings as well.

In Theorem~\ref{nonunif-ldt-thm} we established a non-uniform fiber LDT estimate for any given identically singular cocycle, by reduction to a maximal rank cocycle, for which this result was already available. We indicated in Section~\ref{non-unif-ldt} that this reduction procedure is {\em unstable} under perturbations of the cocycles, and so it does not produce a uniform result, in the sense that the parameters in the LDT estimate may blow up as we perturb the cocycle. The idea is to use this non-uniform LDT just as an input to start the inductive process\textemdash by proximity to the given cocycle, we derive a uniform LDT at a {\em fixed} initial scale. Then we prove a fiber LDT estimate in the vicinity of the given cocycle, at larger and larger scales, using the avalanche principle. This inductive process allows us to keep track of the parameters in the LDT estimates and it ensures their uniformity.

\begin{theorem}\label{u-fiber-ldt-thm}
Given  $A \in \qpcmat{d}$ with $L_1 (A) > L_2 (A)$ and $\om \in \rm{DC}_t$, there are constants $\delta = \delta (A) > 0$, $n_1 = n_1 (A, t) \in \N$, $a_1 = a_1 (d) > 0$, $b_1 = b_1 (d)  > 0$  so that if $\normr{B-A} \le \delta$ and $n \ge n_1$, then 
\begin{equation} \label{unif-ldt-eq}
\abs{  \{ x \in \T^d \colon \abs{ \frac{1}{n} \log \norm{\Bn{n} (x)} - \LE{n}_1  (B)  } > n^{-a_1} \} } < e^{- n^{b_1}}.
\end{equation}
Moreover, there is a constant $C= C (A) < \infty$ such that for all $n \ge 1$
\begin{equation}\label{uniform-l2-bound-thm}
\bnorm{\frac{1}{n} \, \log \, \norm{\Bn{n}}}_{L^2 (\T^d)} < C ,
\end{equation}
showing that $A$ is uniformly $L^2$-bounded.
\end{theorem}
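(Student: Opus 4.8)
The plan is to prove \eqref{unif-ldt-eq} and \eqref{uniform-l2-bound-thm} together, by a multiscale induction on the number of iterates: Theorem~\ref{nonunif-ldt-thm} is used only to launch the induction, and the avalanche principle (AP) of~\cite{DK-book} to propagate it. Fix $\om\in\DC_t$ and work throughout with cocycles $B$ satisfying $\normr{B-A}\le\delta$, with $\delta=\delta(A)>0$ small. Along a rapidly growing sequence of scales $n_0<n_1<n_2<\cdots$ with $n_{k+1}\asymp n_k^{C(d)}$ — the power $C(d)$ being dictated by the fact that $n_k\om\in\DC_{t\,n_k^{-(d+1)}}$, so that the threshold in Theorem~\ref{qBET-ssh-thm} for the translation $T^{n_k}$ grows polynomially in $n_k$, and by the arithmetic of the exceptional sets — I would carry the following package, \emph{uniformly} in $B$: \emph{(i)} the deviation estimate \eqref{unif-ldt-eq} at scale $n_k$, with exponents $a_1=a_1(d)$, $b_1=b_1(d)$ that stabilize from scale $n_1$ on; \emph{(ii)} a bound $\norm{\un{n_k}{B}}_{L^2(\T^d)}\le C_0$; \emph{(iii)} a pointwise lower bound $\un{n_k}{B}(x_{B,k})\ge-C_0$ at \emph{some} phase $x_{B,k}\in\T^d$; and \emph{(iv)} a uniform finite-scale top gap, of the form $2\,\un{n_k}{B}(x)-\tfrac1{n_k}\log\norm{\wedge_2\Bn{n_k}(x)}>\ga_0$ off a set of measure $<e^{-n_k^{b_1}}$, for some $\ga_0=\ga_0(A)>0$. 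Note that (ii) is not an independent assumption: $\un{n_k}{B}(z)$ is separately subharmonic with $\un{n_k}{B}(z)\le\log^+\normr B$ on $\strip_r^d$, so (iii) makes Lemma~\ref{L2bound-ssh} applicable and yields (ii) with $C_0=C_0(A)$, and then Lemma~\ref{Cartan-type-ssh} gives a uniform lower bound on the \emph{mean}, $\LE{n_k}_1(B)=\int_{\T^d}\un{n_k}{B}\ge-C_1$ with $C_1=C_1(A)$ (integrate the pointwise lower bound off a set of measure $e^{-T^\ga}$, $T$ a fixed multiple of $C_0^2$, and control the integral over that set by (ii) and Cauchy--Schwarz). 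Thus the quantity that genuinely has to be propagated is the lower bound at a single point, (iii), and this is precisely the feature that is delicate for identically singular cocycles.

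For the base case, Theorem~\ref{nonunif-ldt-thm} applied to $A$ gives the non-uniform estimate at a scale $n_0=n_0(A,t)$; since $B\mapsto\Bn{n_0}$ is continuous near $A$, taking $\delta$ small enough that $\normr{\Bn{n_0}-\An{n_0}}$ is tiny forces $\un{n_0}{B}$ to agree with $\un{n_0}{A}$ up to a small error except on the sub-exponentially small set (by the {\L}ojasiewicz inequality, Proposition~\ref{Loj-prop}) where $\An{n_0}$ is near $0$, on which (ii) does the bookkeeping; so (i) transfers at scale $n_0$. Since $L_1(A)>L_2(A)\ge-\infty$ we have $L_1(A)>-\infty$, so $A$ is not nilpotent (Corollary~\ref{coro zero rank}) and $\An{n_0}\not\equiv0$; for $\delta$ small $\sup_{\T^d}\norm{\Bn{n_0}}\ge\tfrac12\sup_{\T^d}\norm{\An{n_0}}>0$, hence $\sup_{\T^d}\un{n_0}{B}$ is uniformly bounded below, which is (iii), whence (ii). Finally $L_1(A)>L_2(A)$ gives $\tfrac1n\log\norm{\wedge_2\An n}\to L_1(A)+L_2(A)$ while $\un n A\to L_1(A)$ in $L^1$, so the gap in (iv) holds for $A$ at large $n_0$ off a small set and passes to $B$ near $A$ (when $\rank{A}=1$ the second exterior power eventually vanishes and (iv) is automatic).

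For the inductive step, assuming (i)--(iv) at scale $n_k$ I would write $n_{k+1}=\ell\,n_k+s$ with $0\le s<n_k$, $\ell\asymp n_{k+1}/n_k$, and split $\Bn{n_{k+1}}(x)$ into the product of the $\ell$ blocks $g_j:=\Bn{n_k}(T^{(j-1)n_k}x)$ and a bounded tail. For $x$ outside $\bigcup_jT^{-jn_k}\mathcal B_k$, where $\mathcal B_k$ is the union of the exceptional sets from (i) and (iv) at scale $n_k$ (total measure $\less\ell\,e^{-n_k^{b_1}}<e^{-n_{k+1}^{b_1}}$), hypotheses (iv) and (i) together supply the gap and no-cancellation conditions of the AP along the orbit (this is where the finite-scale domination at scale $n_k$ is transmitted; in particular the no-cancellation gives $\un{2n_k}{B}(T^{(j-1)n_k}x)\ge 2\LE{n_k}_1(B)-n_k^{-a_1}$ at these phases), so the AP yields
\[ \Big|\log\norm{\Bn{n_{k+1}}(x)}-\sum_{j=1}^{\ell}\log\norm{\Bn{2n_k}(T^{(j-1)n_k}x)}+\sum_{j=2}^{\ell-1}\log\norm{\Bn{n_k}(T^{(j-1)n_k}x)}\Big|\less\ell\,e^{-c\,n_k^{1-a_1}}. \]
Dividing by $n_{k+1}$, the two sums are Birkhoff averages, over $\ell$ iterates of $S:=T^{n_k}$, of the separately subharmonic observables $\un{2n_k}{B}$ and $\un{n_k}{B}$; both are $\le\log^+\normr B$ on $\strip_r^d$ and, by (iii) (for $\un{n_k}{B}$) and by the no-cancellation bound above (for $\un{2n_k}{B}$, which gives $\un{2n_k}{B}\ge-2C_1-o(1)$ somewhere), bounded below at a point, so Theorem~\ref{qBET-ssh-thm} applies to their horizontal truncations (the mean-correction being controlled by (ii) and Cauchy--Schwarz exactly as in that theorem's proof) and gives, off a further set of measure $<e^{-\ell^{b}}$,
\[ \un{n_{k+1}}{B}(x)=2\,\LE{2n_k}_1(B)-\LE{n_k}_1(B)+O(n_{k+1}^{-a_1}). \]
From the non-empty good set this yields (iii) at scale $n_{k+1}$ with a uniform constant $C_2=C_2(A)$, hence (ii) there via Lemma~\ref{L2bound-ssh}; integrating the displayed identity over $\T^d$ — the contribution of the exceptional set being $\less C_0\,e^{-\frac12 n_{k+1}^{b_1}}$ by (ii) and Cauchy--Schwarz — gives $\LE{n_{k+1}}_1(B)=2\,\LE{2n_k}_1(B)-\LE{n_k}_1(B)+O(n_{k+1}^{-a_1})$, and subtracting yields \eqref{unif-ldt-eq} at scale $n_{k+1}$, i.e. (i), with exponents depending only on $d$. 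The identical scheme for $\wedge_2 B$ (equivalently, the persistence of the dominant direction, which the AP also delivers) reproduces (iv) at scale $n_{k+1}$. Running the AP step with blocks of length $n_k$ but target scale $N$ over $[n_k^{1+\sigma},n_{k+1}]$ for suitable $\sigma=\sigma(d)>0$ then gives (i) at every such $N$; consecutive intervals overlap, so (i) holds for all $n\ge n_1$, which is \eqref{unif-ldt-eq}. Estimate \eqref{uniform-l2-bound-thm} follows from (ii) for $n\ge n_1$ and, for the finitely many $1\le n<n_1$, from Lemma~\ref{L2bound-ssh} directly after possibly shrinking $\delta$, since $\un n B(z)\le\log^+\normr B$ on $\strip_r^d$ and $\sup_{\T^d}\un n B\ge\tfrac1n\log\big(\tfrac12\sup_{\T^d}\norm{\An n}\big)>-\infty$.

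The main obstacle, and the raison d'\^etre of the whole scheme, is \emph{uniformity} in the cocycle, and the difficulty comes entirely from the cocycles possibly being identically singular, so that for $d>1$ the zero set of $\Bn n$ may be algebraically non-trivial and $\un n B$ genuinely unbounded below. This has two interlocking effects. First, neither Theorem~\ref{qBET-ssh-thm} nor Lemma~\ref{Cartan-type-ssh} can be applied to $\un n B$ until one knows it is bounded below at a \emph{single} phase; this cannot be had a priori (having non-maximal rank is neither open nor closed, and the rank may jump under perturbation), so it must be generated inductively — which is exactly why the AP representation at scale $n_{k+1}$ is engineered to manufacture such a lower bound, with a non-degrading constant, to feed the next step. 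Second, because there are sets of phases over which there is no pointwise control, passing from a pointwise bound on $\un{n_{k+1}}{B}$ to a bound on its mean $\LE{n_{k+1}}_1(B)$ requires the uniform $L^2$ bound, which itself has to be re-established at each scale from the inductively-produced lower bound via Lemma~\ref{L2bound-ssh}. Ensuring that the constants $C_0,C_1,C_2,\ga_0$ and the exponents $a_1,b_1$ do not deteriorate from one scale to the next — so that, beyond the initial scale $n_1=n_1(A,t)$, they depend only on $d$ — is the delicate bookkeeping that makes the induction close.
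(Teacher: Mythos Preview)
Your overall architecture --- non-uniform LDT to seed the induction, proximity at a frozen initial scale, then AP + qBET to climb, with the single-point lower bound (iii) as the crucial inductive datum feeding Lemma~\ref{L2bound-ssh} --- is exactly the paper's. But two steps do not close as written.

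The measure bound ``$\ell\,e^{-n_k^{b_1}}<e^{-n_{k+1}^{b_1}}$'' is false: with $n_{k+1}\asymp n_k^{C(d)}$ and $C(d)>1$ one has $e^{-n_k^{b_1}}=e^{-n_{k+1}^{b_1/C(d)}}$, so the exponent $b_1$ would shrink by a factor $C(d)$ at every step and cannot stabilize. The paper confronts this: after AP + qBET the total exceptional set at scale $n_{k+1}$ is only bounded by $n_{k+1}^{-p}$ for some large $p=p(d)$ (the AP-bad set contributes $e^{-c\,n_k^{b_1}}$, which relative to $n_{k+1}$ is merely polynomial at best). The missing ingredient is Lemma~\ref{splitting-ssh}: because $\un{n_{k+1}}{B}$ is separately subharmonic with the one-point lower bound (iii), a weak polynomial-measure deviation can be \emph{boosted} to a sub-exponential one with fixed exponents $\alpha,\beta$ depending only on $d$. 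This boosting is precisely what makes $a_1,b_1$ stabilize from $n_1$ on; without it the induction does not close.

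Your mechanism for propagating (iv) is also problematic. Running ``the identical scheme for $\wedge_2 B$'' would require $L_1(\wedge_2 A)>L_2(\wedge_2 A)$, i.e.\ $L_2(A)>L_3(A)$, which is not assumed; and the AP as stated (Proposition~\ref{AP-practical}) controls only $\norm{g^{(n)}}$, not $\rgap(g^{(n)})$, so ``the AP also delivers'' the gap at the next scale with an undegraded $\gamma_0$ is unsubstantiated. The paper avoids carrying (iv) inductively altogether: it invokes the uniform upper semicontinuity of the top LE (Proposition~\ref{n-unif-usc}) for $\wedge_2 A$, which gives $\tfrac1n\log\norm{\wedge_2\Bn{n}(x)}\le L_1(A)+L_2(A)+\ep_0$ for \emph{all} $n\ge\nzerobar$, all $x\in\T^d$, and all $B$ in a fixed neighborhood of $A$, once and for all. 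Combined with the lower bound on $\un{n}{B}(x)$ coming from (i) and the proximity $\LE{n}_1(B)\approx\LE{n}_1(A)$, this yields the gap-ratio hypothesis of the AP at every scale with no inductive bookkeeping.
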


\begin{proof} We break down the argument into several steps.
\subsection*{Before we begin.} We define a threshold $\nzerobar$ after which various estimates start being applicable.  It is important to see that this threshold depends only on the given and fixed data: on the frequency $\om$ (in fact, just on the parameter $t$) and on the cocycle $A$ (in fact just on $d$, $r$ and on some measurements of $A$ such as $\norm{A}_r$).

For instance, if $n \ge \nzerobar$, then the non-uniform fiber LDT estimate in Theorem~\ref{nonunif-ldt-thm}, the quantitative Birkhoff Ergodic Theorem~\ref{qBET-ssh-thm}, as well as other results we use, are all applicable at scale $n$.

Asymptotically, any power of $n$ dominates a constant function and it is itself
dominated by an exponential function of  $n$. We assume  $\nzerobar$  to be large enough
that  any of these relations involving concrete size and measure bounds appearing throughout the proof
will  hold for $n \ge \nzerobar$.

As before, for any cocycle $B \in \qpcmat{d}$ and for any number of iterates $n \ge 1$, let $\un{n}{B} \colon \strip_r^d \to [ - \infty, \infty)$ denote the separately subharmonic function
$$\un{n}{B} (z) := \frac{1}{n} \, \log \, \norm{ \Bn{n} (z) } .$$

This function has the uniform upper bound $\un{n}{B} (z) \le \log \norm{B}_r $ but of course,  in general it is not bounded from below (which is the main point of this paper).

Recall the notation $\avg{u}$ for the mean of a function $u (x)$ on $\T^d$. In the case of the functions $\un{n}{B}$ associated to iterates of a cocycle $B$, we also denote this mean by $\LE{n}_1 (B)$, and refer to it as the {\em finite scale} (maximal) Lyapunov exponent of $B$, since $\LE{n}_1 (B) \to L_1 (B)$ as $n \to \infty$. 

\smallskip

Let $\ep_0 := \frac{L_1 (A) - L_2 (A)}{50} > 0$ (if $L_2 (A) > - \infty$, otherwise just choose $\ep_0 :=1$). Assume $\nzerobar$ to be large enough that for all $n, n'  \ge \nzerobar$ we have $\abs{ \LE{n}_1 (A) - \LE{ n'}_1 (A) } < \ep_0$.

\smallskip

Let $C_0$ be a finite constant such that $\log \norm{A}_r < C_0$. This constant is chosen generously enough that if $B$ is a cocycle in a small, fixed neighborhood of $A$, then the bound $\log \norm{B}_r < C_0$ also holds. 

By Corollary~\ref{non-unif-L2bound}, the functions $\un{n}{A}$ are uniformly  (in $n$) $L^2$ bounded, hence we may assume that $\norm{\un{n}{A}}_{L^2 (T^d)} < C_0$. 

This of course also implies that for all $n \ge 1$, $\abs{ \LE{n}_1 (A) } < C_0$.

\smallskip

We will choose other, slightly larger constants $C_0 < C_1 \ll  C_2 $ that accommodate some extra polynomially small error terms, e.g.  $C_0 + \nzerobar^{ - p} < C_1$, for some power $p > 0$ that depends on the given data, and similarly for the other constant.

For two real numbers $a$ and $b$,  we write $a \asymp b$ or $a = \bigo (b)$ to mean $c_1 \, a \le b \le c_2 \, a$ for some absolute, positive constants $c_1$ and $c_2$. When it comes to (integer) scales, $m \asymp n$ is meant in a more strict way as $n \le m \le 3 n$.

\subsection*{The zeroth scale.} We fix $n_0 \ge \nzerobar$ and define a neighborhood of $A$ such that the fiber LDT estimate on $A$ at any scale $m \asymp n_0$  transfers over to every cocycle $B$ in that neighborhood.

Let $\delta_0 := e^{- \bar{C} \, n_0}$ (where $\bar{C}$ will be chosen later) be the size of the neighborhood.  From now on $B$ will be any cocycle with $\norm{B-A}_r < \delta_0$. 

For all scales $m \asymp n_0$ and for all $x \in \T^d$ we have
\begin{align}
\abs{ \norm{ \An{m} (x) } - \norm{ \Bn{m} (x) } }  & \le \norm{ \An{m} (x) - \Bn{m} (x) } \le \delta_0  \, m \, e^{ C_0 \, m} \notag \\
& <  e^{- \bar{C} \, n_0} \, e^{3 (C_0 +1) \, n_0} < e^{- 2 C_1  \, m}  \label{proximity-first-scale},
\end{align}
provided we choose $\bar{C} \more C_0 + C_1$. 

Since $\abs{\LE{m}_1 (A)} < C_0$, at least for some $x_0 \in \T^d$ we must have $ \abs{ \frac{1}{m} \log \norm{\An{m} (x_0)}} < C_0$, so 
$$\norm{\An{m} (x_0) } > e^{- C_0 \, m} .$$

Combined with~\eqref{proximity-first-scale} this implies
$$\norm{ \Bn{m} (x_0) } > e^{- C_0 \, m} - e^{- 2 C_1  \, m}  \more e^{- C_0  \, m} ,$$
hence
$$ \un{m}{B} (x_0) = \frac{1}{m} \log \norm{\Bn{m} (x_0)} \ge -C_0 - \bigo ( \frac{1}{m} )  > - C_1 .$$

Combining this with the upper bound $\un{m}{B} (z) \le C_0$, Lemma~\ref{L2bound-ssh} implies
\begin{equation*}\label{L2bound-n0}
 \norm{\un{m}{B}}_{L^2 (T^d)} \less C_1^2 < C_2 .
 \end{equation*}

Denote by $\B_n (A) :=  \{ x \in \T^d \colon \abs{ \frac{1}{n} \log \norm{\An{n} (x)} - \LE{n}_1  (A)  } >  n^{-a_0} \}$. If $n \ge \nzerobar$, then by Theorem~\ref{nonunif-ldt-thm}, $\abs{\B_n (A)} \le  e^{- n^{b_0}}$.

We are now ready to derive the proximity of the functions $\un{m}{B} (x)$ and $\un{m}{A} (x)$ for many phases $x \in \T^d$, as well as that of their means. 

If  $x \notin \B_n (A)$, then
\begin{align*}
 \frac{1}{n} \log \norm{\An{n} (x)} \ge \LE{n}_1 (A) -  n^{-a_0} > - C_0 -   \nzerobar^{-a_0}  > - C_1 ,
\end{align*}
hence $\norm{\An{n} (x)} > e^{- C_1 \, n}$. 

Then if $m \asymp n_0$ and if $x \notin \B_m (A)$, a similar lower bound holds also for $\Bn{m} (x)$:
\begin{align*}
\norm{\Bn{m} (x)} & \ge \norm{\An{m} (x)} - \norm{ \An{m} (x) - \Bn{m} (x) }  \\
& > e^{- C_1 \, m} - e^{- 2 C_1 \, m} \more e^{- C_1 \, m} ,
\end{align*}

For $m \asymp n_0$ and $x \notin \B_m (A)$ we then have
\begin{align*}
\babs{  \frac{1}{m} \log \norm{\Bn{m} (x)} -  \frac{1}{m} \log \norm{\An{m} (x)} } & \le \frac{1}{m} \, \frac{ \abs{ \norm{ \An{m} (x) } - \norm{ \Bn{m} (x) } }   }{  \min \{ \norm{ \An{m} (x)}, \, \norm{ \Bn{m} (x) }    \} } \\
& < \frac{ e^{- 2 C_1  \, m}   }{  e^{- C_1  \, m}  } = e^{-  C_1  \, m}  .
\end{align*}

Moreover, by the previous estimates and Cauchy-Schwarz,
\begin{align*}
\babs{ \LE{m}_1 (B) - \LE{m}_1 (A) } & \le \int_{\T^d} \, \babs{  \frac{1}{m} \log \norm{\Bn{m} (x)} -  \frac{1}{m} \log \norm{\An{m} (x)} } \,  d x \\ 
\\
&   \kern-8.3em =  \int_{(\B_m (A))\comp} \, \babs{  \un{m}{B} (x) - \un{m}{A} (x) } \,  d x +  \int_{\B_m(A)} \, \babs{   \un{m}{B} (x) - \un{m}{A} (x) } \,  d x \\
& \kern-8.3em  \le e^{-  C_1  \, m}  + ( \norm{\un{m}{B}}_{L^2 (T^d)}  + \norm{\un{m}{A}}_{L^2 (T^d)} ) \cdot \abs{ \B_m (A) }^{1/2} \\
& \kern-8.3em \less e^{-  C_1  \, m} + C_2 \,  e^{- m^{b_0/2}}  < m^{- a_0}  ,
\end{align*}
hence
\begin{equation}\label{cont-n0}
\babs{ \LE{m}_1 (B) - \LE{m}_1 (A) }  <   m^{- a_0}  < \ep_0 .
\end{equation}

We conclude that for all $m \asymp n_0$ and $x \notin \B_m (A)$ (the set where the fiber LDT for $A$ and the estimates above all hold) we have
\begin{align*}
\abs{ \frac{1}{m} \log \norm{\Bn{m} (x)} - \LE{m}_1  (B)  } & \le e^{-  C_1  \, m}  +  m^{- a_0} \\ 
& + \abs{ \frac{1}{m} \log \norm{\An{m} (x)} - \LE{m}_1  (A)  }  \\
& \le 3  \,  m^{- a_0} ,
\end{align*}
which proves the following {\em uniform} fiber LDT at {\em initial} scales $m \asymp n_0$:
\begin{equation}\label{unif-ldt-n0}
\babs{ \{ x \in \T^d \colon  \abs{ \frac{1}{m} \log \norm{\Bn{m} (x)} - \LE{m}_1  (B)   }  >  3  \, m^{- a_0}  \} } < e^{- m^{b_0}} .
\end{equation}

Let us denote by $\B_m (B)$ the exceptional set in \eqref{unif-ldt-n0}, so for $m \asymp n_0$, $\abs{ \B_m (B)} < e^{- m^{b_0}}$.

\subsubsection*{Summary of estimates at scales $\asymp n_0$.} We use \eqref{cont-n0} to derive two more estimates. If $m \asymp n_0$ then
$$\abs{ \LE{m}_1 (B) } < \ep_0 + \abs{ \LE{m}_1 (A) } < \ep_0 + C_0 < C_1 ,$$ 
and if $m, m' \asymp n_0$ then
\begin{align*}
\abs{ \LE{m}_1 (B) - \LE{m'}_1 (B) } & \le \abs{ \LE{m}_1 (B) - \LE{m}_1 (A) } + \abs{ \LE{m'}_1 (B) - \LE{m'}_1 (A) } \\
& + \abs{ \LE{m}_1 (A) - \LE{m'}_1 (A) } \\
&  \le m^{- a_0} + m'^{- a_0} + \ep_0 < 2 \, \ep_0  .
\end{align*}

 We now summarize the estimates at scales $\asymp n_0$ that are needed at the next scale. 
Let $m, m' \asymp n_0$. Then for any cocycle $B$ with $\norm{B-A}_r \le \delta_0$ the following hold:
\begin{subequations}
\label{summary-scale-n0}
\begin{align}
 \label{summary-n0-eq1bis}
  \norm{\un{m}{B}}_{L^2 (\T^d)} & < C_2 \\
  \label{summary-n0-eq1}
  \abs{\LE{m}_1 (B)} & < C_1 \\
  \label{summary-n0-eq3}
   \abs{ \LE{m}_1 (B) - \LE{m'}_1 (B) } & < 2 \,  \ep_0  \\
   \label{summary-n0-eq2}
  \babs{ \LE{m}_1 (B) - \LE{m}_1 (A) }  & < \ep_0 \\
   \label{summary-n0-eq4}
   \abs{ \frac{1}{m} \log \norm{\Bn{m} (x)} - \LE{m}_1  (B)  } & \le 3 \, m^{- a_0} ,
\end{align}
\end{subequations}
where the last estimate holds for all $x \notin \B_m (B)$, with $\abs{ \B_m (B)} < e^{- m^{b_0}}$.

\subsection*{The first scale.} Let the new scale $n_1$ be such that $n_0^{p'} \le n_1 \le  e^{ n_0^{b_0/2}}$, where the power $p' = p' (d)$ is large enough but finite, and it will be made more explicit later.  We fix (any) such integer $n_1$ and prove a uniform LDT  for scales $\asymp n_1$.
The idea is to break down the long block (i.e. product of matrices) $\Bn{n_1} (x)$ into blocks of length $\asymp n_0$ and apply the avalanche principle to these shorter blocks, thus relating certain quantities at scale $n_1$ to similar quantities at scales $\asymp n_0$.

\subsubsection*{Recalling a couple of relevant results.} For the reader's convenience, we formulate below two crucial results proven in our monograph \cite{DK-book}: the avalanche principle (AP) and the uniform upper semicontinuity (u.s.c.) of the maximal Lyapunov exponent.  

In \cite{DK-book}  we proved a general version of the AP, one that applies to a sequence of higher dimensional, non-invertible matrices. Moreover, compared to previous versions of the principle, we also removed the constraint on the number of matrices in the sequence. Below we describe one of the statements in the AP that will be used in this proof. Recall the following terminology: if $g \in \gl_m (\R)$, let $s_1 (g) \ge s_2 (g) \ge \ldots \ge s_m (g) \ge 0$ be its singular values and let $\rgap (g) := \frac{s_1 (g)}{s_2 (g)} \ge 1$ denote the ratio of its first two singular values.

\begin{proposition} \label{AP-practical}
There exists $c>0$ such that
 given $0<\epsilon<1$,  $0<\kappa\leq c\,\epsilon^ 2$ 
and  \,  $g_0, g_1,\ldots, g_{n-1}\in\gl_m (\R)$, \,
 if
\begin{align*}
\rm{(gaps)} \  & \rgap (g_i) >  \frac{1}{\ka} &  \text{for all }  & \ \  0 \le i \le n-1  
\\
\rm{(angles)} \  & \frac{\norm{ g_i \, g_{i-1} }}{\norm{g_i}  \, \norm{ g_{i-1}}}  >  \ep  & \
 \text{for all }   & \  \ 1 \le i \le n-1  
\end{align*}
then  denoting $ g^{(n)} := g_{n-1} \ldots g_1 \, g_0$, we have
\begin{align*} 
\sabs{ \log \norm{ g^{(n)} } + \sum_{i=1}^{n-2} \log \norm{g_i} -  \sum_{i=1}^{n-1} \log \norm{ g_i \cdot g_{i-1}} } \less n \cdot \frac{\ka}{\ep^2} \;.
\end{align*}
 \end{proposition}

\smallskip

We now describe the uniform u.s.c. of the maximal LE (see \cite{JitMavi} for the original statement and, for a formulation that completely covers our setting, see Proposition 3.1 and Remark 3.2 in \cite{DK-book}).

\begin{proposition}\label{n-unif-usc}
Let $A \in \qpcmat{d}$. 

\begin{enumerate}
\item[(i)] Assume that  $L_1 (A) > - \infty$.   
\end{enumerate}

For every $\epsilon > 0$, there are $\delta = \delta(A, \epsilon) > 0$ and $n_0 = n_0 (A, \epsilon) \in \N$  such that if $B \in \qpcmat{d}$ with $\norm{B-A}_r < \delta$ and if $n \ge n_0$ then for all $x \in \T^d$ 
\begin{equation}\label{n-unif-usc-eq}
\frac{1}{n} \, \log \norm{B^{(n)} (x) } \le L_1 (A) + \epsilon .
\end{equation}

\begin{enumerate}
\item[(ii)] Assume that $L_1 (A) =  - \infty$.  
\end{enumerate}
For every $L < \infty$, there are $\delta = \delta(A, L) > 0$ and $n_0 = n_0 (A, L) \in \N$ such that if $B \in \qpcmat{d}$ with $\norm{B-A}_r < \delta$ and if $n \ge n_0$, then for all $x \in \T^d$ 
\begin{equation}\label{n-unif-usc-eq-infty}
\frac{1}{n} \, \log \norm{B^{(n)} (x) } \le -  L .
\end{equation}
\end{proposition}

We note that from Corollary~\ref{non-unif-L2bound} it follows that in our setting (of analytic, quasi-periodic cocycles), if  $L_1 (A) > - \infty$ then $A$ is automatically $L^1$-bounded. Hence all the assumptions of the more general corresponding result  in \cite{DK-book} are satisfied.

\subsubsection*{Dividing into smaller blocks.} Consider the block $\Bn{n_1} (x)$ of length $n_1$ and break it down into $n$ blocks, each having length $n_0$, except possibly for the last block, which will have length $m_0 \asymp n_0$.  

More precisely, let $n$ and $m_0$ be such that $n_1 = (n-1) \cdot n_0 + m_0$ and $n_0 \le m_0 < 2 m_0$.

For $0 \le i \le n-2$ define 
\begin{align*}
g_i  = g_i (x) & := \Bn{n_0} (T^{i n_0} x) \quad \text{and} \\
g_{n-1}  = g_{n-1} (x) & := \Bn{m_0} (T^{(n-1) n_0} x) . 
\end{align*}

Then $g^{(n)} = g_{n-1} \ldots g_1 \, g_0 = \Bn{n_1} (x)$. 

We show that the geometrical assumptions ``gaps'' and ``angles'' of the AP are satisfied for these matrices if we choose the phases $x$ outside a certain small set.

\subsubsection*{The gaps condition.} We establish this condition as a consequence of  the uniform LDT~\eqref{summary-n0-eq4} and the estimate~\eqref{summary-n0-eq2} proven for scales $m \asymp n_0$. 

In the proof we use the uniform u.s.c. of the maximal LE in Proposition~\ref{n-unif-usc} and the hypothesis on the existence of a gap between the first two LE of $A$ (i.e. the fact that $L_1 (A) - L_2 (A) > 0$).

Note that if $g \in \gl_m (\R)$, then $$\rgap (g) := \frac{s_1 (g)}{s_2 (g)} = \frac{s_1^2 (g)}{s_1 (g) \, s_2 (g)} = \frac{\norm{g}^2}{\norm{ \wedge_2 g }} ,$$
where $\wedge_2 g$ represents the second exterior power of $g$.
If $m \asymp n_0$ then
\begin{equation}\label{angles-eq1}
\frac{1}{m} \, \log \rgap (\Bn{m} (x)) = 2 \, \frac{1}{m} \, \log \norm{\Bn{m} (x)} -  \frac{1}{m} \, \log \norm{\wedge_2 \Bn{m} (x)} \,.
\end{equation} 

We estimate each of the two terms above from below.
 
If $x \notin \B_m (B)$, the LDT estimate~\eqref{summary-n0-eq4} implies
\begin{equation}\label{proof-gaps-n0}
\un{m}{B} (x) =  \frac{1}{m} \, \log \norm{\Bn{m} (x)}  > \LE{m}_1 (B) - 3 \, m^{-a_0}  .
\end{equation}

Combining this with the fact that when $m \asymp n_0$, by \eqref{summary-n0-eq2},  $\LE{m}_1 (B)$ and $\LE{m}_1 (A)$ are close, we conclude that if  $x \notin \B_m (B)$ then
\begin{equation}\label{angles-eq2}
 \frac{1}{m} \, \log \norm{\Bn{m} (x)} > \LE{m}_1 (A) - \ep_0  - 3 \, m^{-a_0} \ge L_1 (A) - 2 \ep_0 .
 \end{equation}

To estimate $\frac{1}{m} \, \log \norm{\wedge_2 \Bn{m} (x)} $ from above, we apply the uniform u.s.c. of the maximal LE in Proposition~\ref{n-unif-usc},  to the cocycle $\wedge_2 B$. By making $\delta_0$ smaller, we may assume that $B$ is close enough to $A$ that in turn $\wedge_2 B$ is close enough to $\wedge_2 A$. 

If  $L_1 (\wedge_2 A) > - \infty$, which is equivalent to $L_2 (A) > - \infty$, we have
\begin{equation}\label{angles-eq10}
\frac{1}{n} \, \log \norm{\wedge_2 \Bn{n} (x)} < L_1 (\wedge_2 A) + \ep_0 = L_1 (A) + L_2 (A) + \ep_0
\end{equation}
for all $n \ge \nzerobar$ and for all $x \in \T^d$ (which is the reason for calling this a uniform statement). 
Then for all $x \in \T^d$ we have
\begin{equation}\label{angles-eq3}
- \frac{1}{m} \, \log \norm{\wedge_2 \Bn{m} (x)}  > - L_1 (A) - L_2 (A) - \ep_0 \, . 
\end{equation}

Estimates \eqref{angles-eq1}, \eqref{angles-eq2}, \eqref{angles-eq3} imply that if $m \asymp n_0$ and $x \notin \B_m (B)$, then
\begin{align*}
\frac{1}{m}\log \rgap (\Bn{m} (x)) & > 2 L_1 (A) -  4 \ep_0 - L_1 (A) - L_2 (A) - \ep_0 \\
& = L_1 (A) - L_2 (A) - 5 \ep_0 >  19 \ep_0 , 
\end{align*}
so
\begin{equation}\label{angles-eq}
\rgap (\Bn{m} (x)) \ge e^{19 \, \ep_0 \, m} \ge e^{19 \, \ep_0 \, n_0} =: \frac{1}{\ka_{n_0}}  \quad \text{for } x \notin \B_m (B) .
\end{equation}

The case when $L_1 (\wedge_2 A) = - \infty$ is treated similarly, the difference being that in \eqref{angles-eq10}, instead of $L_1 (\wedge_2 A) + \ep_0$, we may take $- L$, with $L$ arbitrarily large (it will have to be chosen properly). 

Estimate \eqref{angles-eq} shows that the gaps condition in the AP (Proposition~\ref{AP-practical}) holds for $g = g (x) := \Bn{m} (x)$ provided  $m \asymp n_0$ and $x \notin \B_m (B)$. Therefore, in order to hold for all matrices $g_0, g_1, \ldots, g_{n-1}$ defined above, we simply exclude the set of phases
$$\B_{n_0}^{\rm gaps} (B) :=   T^{- (n-1) n_0} \, \B_{m_0} (B) \,  \cup \,  \bigcup_{i=0}^{n-2} \, T^{- i n_0} \, \B_{n_0} (B)  .$$

Note that $\abs{ \B_{n_0}^{\rm gaps} (B)   } < n \, e^{- n_0^{b_0}} \le n_1 \, e^{- n_0^{b_0}}  < e^{- \frac{1}{2} \, n_0^{b_0}}$.

\subsubsection*{The angles condition.} We derive this condition from the estimate~\eqref{summary-n0-eq3} and the LDT estimate~\eqref{summary-n0-eq4}, which are available at scales $m \asymp n_0$.

Let $m = n_0$ and $n_0 \le m' \le 2 n_0$, so that $m, m', m+m' \asymp n_0$. Then the estimate~\eqref{summary-n0-eq3} and the fiber LDT estimate~\eqref{summary-n0-eq4} apply at scales $m, m', m+m'$, so if $x \notin \B_m (B) \cup T^{-m} \B_{m'} (B) \cup \B_{m+m'} (B) =: \B_{(m, m')} (B)$, 
\begin{align*}
& \frac{\norm{\Bn{m+m'} (x)}}{\norm{\Bn{m'} (\transl^{m} x)} \ \norm{\Bn{m} (x)}} \\
& \qquad > \frac{ e^{  (m+m') \, [ \LE{m+m'}_{1} (B)   - 3 \,  (m+m')^{-a_0} ] }   }{   e^{  m' \, [ \LE{m'}_{1} (B)   +  3 \, (m')^{-a_0} ] }  \ e^{  m \, [ \LE{m}_{1} (B)   + 3 \, m^{-a_0} ] } }  \\
& \qquad >  e^{m [ \LE{m+m'}_{1} (B) - \LE{m}_{1} (B)  ] + m' \, [ \LE{m'+m}_{1} (B) - \LE{m'}_{1} (B) ]  - (m+m') \, 9 \, n_0^{-a_0}} \\
& \qquad >  e^{- (m+m') ( 2 \, \ep_0 + 9 \, n_0^{-a_0})} > e^{- 9 \, \ep_0 \, n_0} .
\end{align*}

Therefore, if $ x \notin \B_{(m, m')} (B)$, which is a set of measure $ < 3 \, e^{- n_0^b}$, we have
\begin{equation}\label{gaps-eq}
\frac{ \norm{ \Bn{m+m'} (x) } }{ \norm{ \Bn{m'} (T^m x) } \, \norm{ \Bn{m} (x) } } >  e^{- 9 \, \ep_0 \, n_0} =: \ep_{n_0} .
\end{equation}

Estimate \eqref{gaps-eq} shows that the angles condition in the AP (Proposition~\ref{AP-practical}) holds for $g = g (x) :=  \Bn{m} (x)$ and $g' = g' (x) :=  \Bn{m'} (T^m x)$, provided the scales $m, m'$ and the phases $x$ are as described. Therefore, in order to hold for all matrices $g_0, g_1, \ldots, g_{n-1}$ defined above, we simply exclude the set of phases
$$\B_{n_0}^{\rm angles} (B) :=   \B_{(n_0, m_0)} (B) \, \cup  \, \bigcup_{i=0}^{n-2} \,   T^{- i n_0} \, \B_{(n_0, n_0)} (B)  .$$

Note that $\abs{ \B_{n_0}^{\rm angles} (B)   } < 3 \, n \, e^{- n_0^{b_0}} < e^{- \frac{1}{2} \, n_0^{b_0}}$.

\subsubsection*{Applying the avalanche principle.}
Let $\B_{n_0}^{\rm ap} (B) := \B_{n_0}^{\rm gaps} (B)  \, \cup \, \B_{n_0}^{\rm angles} (B)$, so $ \abs{ \B_{n_0}^{\rm ap} (B)   } < e^{- \frac{1}{2} \, n_0^{b_0}}$.

Moreover, from \eqref{angles-eq}, \eqref{gaps-eq} we have  $\frac{\ka_{n_0}}{\ep_{n_0}^2} = e^{- \ep_0 \, n_0} \ll 1$. 

We conclude that if $x \notin \B_{n_0}^{\rm ap} (B) $ then the AP in Proposition~\ref{AP-practical} applies and we have:
\begin{align*}
\log \, \norm{ \Bn{n_1} (x) } & = - \sum_{i=1}^{n-2} \log \, \norm{ \Bn{n_0} (T^{i n_0} x) } \\
& \kern-4em +   \log \, \norm{ \Bn{n_0 + m_0} (T^{(n-2) n_0} x) } + \sum_{i=1}^{n-2} \log \, \norm{ \Bn{2 n_0} (T^{i n_0} x) } + \bigo \left( n \, \frac{\ka_{n_0}}{\ep_{n_0}^2} \right) .
\end{align*}

Divide both sides by $n_1$ and re-write the expression as
\begin{align*}
\frac{1}{n_1} \, \log \, \norm{ \Bn{n_1} (x) }  = & - \frac{(n-2) \, n_0}{n_1}  \ \frac{1}{n - 2}  \,  \sum_{i=1}^{n-2}   \frac{1}{n_0} \, \log \,  \norm{ \Bn{n_0} (T^{i n_0} x) } \\
& + \frac{2 (n-2) \, n_0}{n_1} \  \frac{1}{n - 2} \,   \sum_{i=1}^{n-2}  \frac{1}{2 n_0} \,  \log \, \norm{ \Bn{2 n_0} (T^{i n_0} x) } \\
& +  \frac{n_0 + m_0}{n_1} \,   \frac{1}{n_0+m_0}  \, \log \, \norm{ \Bn{n_0 + m_0} (T^{(n-2) n_0} x) } \\
& + \bigo ( \frac{n}{n_1} \,  \frac{ \ka_{n_0}}{\ep_{n_0}^2} ) .
\end{align*}

We estimate each of the terms above.

First off, since $n_1 \asymp n \, n_0$, we have
$$\frac{(n-2) \, n_0}{n_1}  = 1 + \bigo \left( \frac{n_0}{n_1}\right), \ \frac{2 (n-2) \, n_0}{n_1}  = 2 + \bigo \left( \frac{n_0}{n_1}\right),  \  \frac{n_0 + m_0}{n_1} = \bigo \left( \frac{n_0}{n_1}\right) .$$

Note that 
$$ \frac{1}{n - 2}  \,  \sum_{i=1}^{n-2}   \frac{1}{n_0} \, \log \,  \norm{ \Bn{n_0} (T^{i n_0} x) } =  \frac{1}{n - 2}  \,  \sum_{i=1}^{n-2} \un{n_0}{B} (x + i \ n_0 \, \om)$$
is a Birkhoff average of $\un{n_0}{B} (x)$ over the translation vector $\om_0 := n_0 \, \om$. 

The goal is to apply the quantitative Birkhoff Ergodic Theorem~\ref{qBET-ssh-thm} to the separately subharmonic function $u=\un{n_0}{B}$ and to the translation vector $\om_0$. We need to verify the assumptions of that theorem and estimate the relevant parameters.

Recall that we always have $\un{n_0}{B} (z) \le C_0$ for all $z \in \strip_r^d$.
Moreover, by \eqref{summary-n0-eq1} we also have $\un{n_0}{B} (x) > - C_1$ for some $x \in \T^d$. Therefore, items~\eqref{item1} and~\eqref{item2} in Theorem~\ref{qBET-ssh-thm} hold.

Since $\om \in \DC_t$, for all $k \in \Z^d \setminus \{0\}$,
$$\norm{ k \cdot \om_0} = \norm{k \cdot n_0 \, \om} = \norm{n_0 \, k \cdot \om} \ge \frac{t}{\sabs{n_0 \, k}^{d+1}} = \frac{t \ n_0^{-(d+1)}}{\sabs{k}^{d+1}} \, ,$$
so $\om_0 \in \DC_{t_0}$, where $t_0 := t \ n_0^{-(d+1)} $.

Then Theorem~\ref{qBET-ssh-thm} is applicable provided $n \ge k \, t_0^{-2} = k \, t^{-2} \, n_{0}^{2(d+1)}$, where $k = k (r, d) < \infty$. But $n \asymp \frac{n_1}{n_0}$ and $n_1 \ge n_0^{p'}$, hence for everything to work out we just have to choose $p' > 2 (d+1) + 1$ and $\nzerobar$ large enough depending on $t, r, d$.

There are positive constants $a = a(d)$, $b = b (d)$ and $C_r < \infty$ such that
 $$\frac{1}{n - 2}  \,  \sum_{i=1}^{n-2} \un{n_0}{B} (x + i \ n_0 \, \om) = \avg{ \un{n_0}{B} } + \bigo \left( C_r \, C_1^2 \, n^{-a} \right) \, ,$$
 provided $x$ is outside a set $\B_1$ with $\abs{\B_1} < e^{- n^b}$. 

We conclude that if $x \notin \B_1$, then
$$ \ \frac{1}{n - 2}  \,  \sum_{i=1}^{n-2}   \frac{1}{n_0} \, \log \,  \norm{ \Bn{n_0} (T^{i n_0} x) }  = \LE{n_0}_1 (B) + \bigo \left( C_r \, C_1^2 \, n^{-a} \right) \, .$$

Similarly, since all of this applies in fact to $\un{m}{B} (x)$ for all $m \asymp n_0$, so in particular it applies to $m = 2 n_0$, there is a set $\B_2$ with $\abs{\B_2} <  e^{- n^b}$ such that if $x \notin \B_2$ then
$$ \frac{1}{n - 2} \,   \sum_{i=1}^{n-2}  \frac{1}{2 n_0} \,  \log \, \norm{ \Bn{2 n_0} (T^{i n_0} x) } =  \LE{2 n_0}_1 (B) + \bigo \left( C_r \, C_1^2 \, n^{-a} \right) \, .$$

Note that $C_r \, C_1^2 \, n^{-a}  \le C_r \, C_2 \, n_0^a \, n_1^{-a}  < n_1^{- a/2} $.

Since $n_0 + m_0 \asymp n_0$, we may apply \eqref{summary-n0-eq4} to $ \frac{1}{n_0+m_0}  \, \log \, \norm{ \Bn{n_0 + m_0} } $. Hence there is a set $\B_3$ with $\abs{\B_3} < e^{- n_0^{b_0}}$ such that if $x \notin \B_3$ then
$$ \babs{ \frac{1}{n_0+m_0}  \, \log \, \norm{ \Bn{n_0 + m_0} (T^{(n-2) n_0} x) }  -  \LE{n_0+m_0}_1 (B) } \le  3 \, n_0^{- a_0}  .$$

Moreover, by~\eqref{summary-n0-eq1}, $\abs{ \LE{n_0+m_0}_1 (B) } < C_1$, hence for  $x \notin \B_3$
$$\left|  \frac{1}{n_0+m_0}  \, \log \, \norm{ \Bn{n_0 + m_0} (T^{(n-2) n_0} x) } \right| <  C_1 + 3 \, n_0^{- a_0} <  C_2 .$$

\smallskip

Let $\B := \B_1 \cup \B_2 \cup \B_3$. Note that since $n \asymp \frac{n_1}{n_0} \gg n_0$ and $n_1 \le e^{n_0^{b_0 / 2}}$, we have
$$\abs{\B} < e^{- n^b} + e^{- n^b} + e^{- n_0^{b_0}} <  n_1^{- p} \, ,$$
where $p = p (d)$ is chosen large enough.

If $x \notin \B$, then putting it all together we have
\begin{align*}
\frac{1}{n_1} \, \log \, \norm{ \Bn{n_1} (x) }  = & - \left[ 1 +  \bigo \left( \frac{n_0}{n_1}\right) \right] \ \left[ \LE{n_0}_1 (B) + \bigo (n_1^{-a/2}) \right] \\
& + \left[ 2 +  \bigo \left( \frac{n_0}{n_1}\right) \right] \ \left[ \LE{2 n_0}_1 (B) + \bigo (n_1^{-a/2}) \right] \\
& + \bigo \left( C_2 \, \frac{n_0}{n_1}\right)  + \bigo \left( e^{-  \ep_0 \, n_0}  \right) \\
& = -  \LE{n_0}_1 (B) + 2 \,  \LE{2 n_0}_1 (B) + \, \bigo (n_1^{-a/2}) \, .
\end{align*}

We have shown that if $x \notin \B$ then
\begin{equation}\label{indstep-eq10}
\un{n_1}{B} (x) =  -  \LE{n_0}_1 (B) + 2 \,  \LE{2 n_0}_1 (B) + \, \bigo (n_1^{-a/2}) \, .
\end{equation}

Note that by \eqref{summary-n0-eq1} and \eqref{summary-n0-eq3}
\begin{align*}
 -  \LE{n_0}_1 (B) + 2 \,  \LE{2 n_0}_1 (B)  & =  \LE{n_0}_1 (B) - 2 \left[  \LE{n_0}_1 (B) -  \LE{2 n_0}_1 (B) \right] \\
 &  \ge - C_1  - 4 \, \ep_0 \, ,
 \end{align*}
so if $x \notin \B$, then by~\eqref{indstep-eq10} 
$$\un{n_1}{B} (x) > - C_1 - 4 \ep_0 - \bigo (n_1^{-a/2})  > - 2 C_1 \, .$$

Moreover, as always, $\un{n_1}{B} (z) \le C_0$ for all $z \in \strip_r^d$.

Then by Lemma~\ref{L2bound-ssh} we have
\begin{equation}\label{uniform-l2bound-n1}
\norm{ \un{n_1}{B} }_{L^2 (\T^d)} \less C_1^2 < C_2 \, .
\end{equation}

We now show that $ \LE{n_1}_1 (B) \approx -  \LE{n_0}_1 (B) + 2 \,  \LE{2 n_0}_1 (B)$.
\begin{align*}
 \abs{  \LE{n_1}_1 (B)  +  \LE{n_0}_1 (B) - 2 \,  \LE{2 n_0}_1 (B) }   & \\
& \kern-10em = \babs{ \int_{\T^d} \, \left[  \un{n_1}{B} (x)   +  \LE{n_0}_1 (B) - 2 \,  \LE{2 n_0}_1 (B)  \right] \, d x   } \\
 & \kern-10em \le \int_{\B\comp} \, \abs{  \un{n_1}{B} (x)   +  \LE{n_0}_1 (B) - 2 \,  \LE{2 n_0}_1 (B) } \, d x \\
& \kern-10em +   \int_{\B} \, \abs{  \un{n_1}{B} (x)   +  \LE{n_0}_1 (B) - 2 \,  \LE{2 n_0}_1 (B) } \, d x\\
& \kern-10em \less  n_1^{- a/2} + C_2 \ \abs{ \B }^{1/2} \less n_1^{- a/2}  + C_2 \, n_1^{- p/2}  \less   n_1^{- a/2}  \, ,
\end{align*}
where to estimate the integral on $\B$ we used~\eqref{uniform-l2bound-n1} and Cauchy-Schwarz. 
 
Hence
\begin{equation}\label{indstep-eq11}
\abs{  \LE{n_1}_1 (B)  +  \LE{n_0}_1 (B) - 2 \,  \LE{2 n_0}_1 (B) }  \less  n_1^{- a/2}  \, .
\end{equation}

Combining \eqref{indstep-eq10} and \eqref{indstep-eq11} we obtain that if $x \notin \B$ then
$$\abs{ \un{n_1}{B} (x) - \LE{n_1}_1 (B) }  \less  n_1^{- a/2} < n_1^{- a/3} \, .$$

Since $\abs{\B} <  n_1^{- p} $, where $p$ is large enough, we conclude that the se\-pa\-ra\-tely subharmonic function $\un{n_1}{B}$ satisfies the weak a-priori estimate
$$\babs{ \{ x \in \T^d \colon  \abs{ \un{n_1}{B} (x) - \avg{ \un{n_1}{B} } }  >  n_1^{- a/3}  \} } < n_1^{- p} \, .$$

Lemma~\ref{splitting-ssh} is then applicable with $u =  \un{n_1}{B}$, $\ep_0 = n_1^{- a/3}$, $\ep_1 =  n_1^{- p}$, since items~\eqref{item1} and \eqref{item2} also hold.

The weak a-priori estimate is then boosted to
$$\babs{ \{ x \in \T^d \colon  \abs{ \un{n_1}{B} (x) - \avg{ \un{n_1}{B} } }  > n_1^{- \alpha \, a/3}  \} } < e^{- n_1^{ \beta \, a/3}} \, .$$

This establishes the uniform LDT estimate~\eqref{unif-ldt-eq} at scale $n_1$ with parameters $a_1 := \alpha \, a/3$ and $b_1 := \beta \, a/3$, where $a, \alpha, \beta$ depend only on the number of variables $d$. 

It is important to recall the provenance of these constants, namely the estimates in Theorem~\ref{qBET-ssh-thm} and Lemma~\ref{splitting-ssh} on {\em general} separately subharmonic functions. Therefore, these constants will {\em not} change as we continue this process inductively.

\subsubsection*{Summary of estimates at scales $\asymp n_1$.} Let us note that everything we have done at scale $n_1$ applies identically for any scale $m \asymp n_1$. 

We now summarize the estimates at scales $\asymp n_1$ that are needed at the next scale. 
Let $m, m' \asymp n_1$. Then for any cocycle $B$ with $\norm{B-A}_r \le \delta_0$ the following hold:
\begin{subequations}
\label{summary-scale-n1}
\begin{align}
\label{summary-n1-eq1bis}
  \norm{\un{m}{B}}_{L^2 (\T^d)} & < C_2 \\
  \label{summary-n1-eq1}
  \abs{\LE{m}_1 (B)} & < C_1 \\
   \label{summary-n1-eq3}
   \abs{ \LE{m}_1 (B) - \LE{m'}_1 (B) } & < n_1^{-a/3}  \\
   \label{summary-n1-eq2}
  \babs{ \LE{m}_1 (B) - \LE{m}_1 (A) }  & < 9 \, \ep_0 + n_1^{- a/3} < 10 \, \ep_0 \\
   \label{summary-n1-eq4}
   \abs{ \frac{1}{m} \log \norm{\Bn{m} (x)} - \LE{m}_1  (B)  } & \le  \, m^{- a_1} ,
\end{align}
\end{subequations}
where the last estimate holds for all $x \notin \B_m (B)$, with $\abs{ \B_m (B)} < e^{- m^{b_1}}$.

\smallskip

Let us explain why these estimates do indeed hold. First off, \eqref{summary-n1-eq1bis} corresponds to \eqref{uniform-l2bound-n1}, while  \eqref{summary-n1-eq4} is the uniform LDT just proven above for the scale $n_1$, and so for all other similar scales.

Moreover, \eqref{indstep-eq11} also holds for all $m \asymp n_1$ and we have
\begin{equation}\label{indstep-eq12}
\abs{  \LE{m}_1 (B)  +  \LE{n_0}_1 (B) - 2 \,  \LE{2 n_0}_1 (B) }  \less n_1^{- a/2}  ,
\end{equation}
which implies
$ \abs{ \LE{m}_1 (B)  -  \LE{m'}_1 (B) } \less n_1^{- a/2} < n_1^{-a/3}$, 
 justifying \eqref{summary-n1-eq3}.

We rewrite  \eqref{indstep-eq12} as
$$\babs{  \LE{m}_1 (B)  -  \LE{n_0}_1 (B) + 2 \, \left[ \LE{n_0}_1 (B) - \LE{2 n_0}_1 (B) \right] }  \less n_1^{- a/2}$$
and apply it  to both $B$ and $A$. Together with  \eqref{summary-n0-eq2} and \eqref{summary-n0-eq3} we get
\begin{align*}
\abs{  \LE{m}_1 (B)  -  \LE{m}_1 (A) }   & \le  \abs{  \LE{n_0}_1 (B)  -  \LE{n_0}_1 (A) }  \\
& \kern-1.5em + \babs{  \LE{m}_1 (B)  -  \LE{n_0}_1 (B) + 2 \, \left[ \LE{n_0}_1 (B) - \LE{2 n_0}_1 (B) \right] }  \\
& \kern-1.5em + \babs{  \LE{m}_1 (A)  -  \LE{n_0}_1 (A) + 2 \, \left[ \LE{n_0}_1 (A) - \LE{2 n_0}_1 (A) \right] }  \\
& \kern-1.5em + 2 \abs{  \LE{n_0}_1 (B)  -  \LE{2 n_0}_1 (B) } + 2 \, \abs{  \LE{n_0}_1 (A)  -  \LE{2 n_0}_1 (A) } \\
& \kern-1.5em \le \ep_0 + \bigo (n_1^{-a/2}) + \bigo (n_1^{-a/2}) + 4 \ep_0 + 4 \ep_0 \\
& \kern-1.5em < 9 \ep_0 + n_1^{-a/3} < 10 \ep_0 .
\end{align*}
This  justifies \eqref{summary-n1-eq2}, which then implies
$$\abs{ \LE{m}_1 (B) } < 10 \, \ep_0 + \abs{ \LE{m}_1 (A) } < 10 \, \ep_0 + C_0 < C_1 ,$$
proving \eqref{summary-n1-eq1}.

\subsection*{The next scales.} We explain how the inductive procedure continues. The argument is identical to the one used to derive the estimates~\eqref{summary-scale-n1} at scales $\asymp n_1$ from the corresponding estimates~\eqref{summary-scale-n0} at scales $\asymp n_0$.

For every scale $n_1$ in the range prescribed earlier, let the next scale $n_2$ be such that 
$n_1^{p'} \le n_2 \le e^{n_1^{b_1/2}}$. Break down the block $\Bn{n_2} (x)$ into blocks of lengths $\asymp n_1$ and set out to apply the avalanche principe to the resulting chain of matrices. 

The ``gap'' condition for blocks of lengths $\asymp n_1$ is ensured by  the uniform LDT~\eqref{summary-n1-eq4} and the estimate~\eqref{summary-n1-eq2} proven for scales $m \asymp n_1$. 
The proof uses the uniform upper semicontinuity of the maximal LE in Proposition~\ref{n-unif-usc} (which holds for all scales) and the hypothesis on the existence of a gap between the first two LE of $A$. It proceeds as at the previous scale,  the only difference being that from~\eqref{summary-n1-eq4}, for $m \asymp n_1$, \eqref{proof-gaps-n0} becomes instead
$$
\un{m}{B} (x) =  \frac{1}{m} \, \log \norm{\Bn{m} (x)}  > \LE{m}_1 (B) - m^{-a_1}  .
$$
Together with estimate~\eqref{summary-n1-eq2} (which compared to ~\eqref{summary-n0-eq2} has the factor $10$), this shows that the analogue of \eqref{angles-eq2} is
$$
 \frac{1}{m} \, \log \norm{\Bn{m} (x)} > \LE{m}_1 (A) - 10 \, \ep_0  - m^{-a_1} \ge L_1 (A) - 11 \, \ep_0 .
$$

Moreover,  the ``angles" condition for blocks of lengths  $\asymp n_1$ is ensured by ~\eqref{summary-n1-eq4} and ~\eqref{summary-n1-eq3} exactly the same way they were derived at the previous scale. In fact, the bound obtained will be stronger, because ~\eqref{summary-n1-eq4} and ~\eqref{summary-n1-eq3} are sharper than their counterparts ~\eqref{summary-n0-eq4} and ~\eqref{summary-n0-eq3} at scale $\asymp n_0$.

After using the AP, the next step is to apply the quantitative Birkhoff Ergodic Theorem~\ref{qBET-ssh-thm} to the separately subharmonic functions $\un{m}{B} (z)$ with $m \asymp n_1$ and translation vector $\om_1 := n_1 \, \om$. The assumptions in this theorem hold as follows. 
The upper bound $\sup_{z \in \strip_r^d} \, \un{m}{B} (z) \le C_0$ is always true, ensuring that item~\eqref{item1} holds. Estimate~\eqref{summary-n1-eq1} implies $\babs{ \avg{\un{m}{B}}} = \babs{\LE{m}_1 (B)} < C_1$, hence for some $x_0 \in \T^d$, $\un{m}{B} (x_0) > - C_1$, showing that item~\eqref{item2} also holds. Finally, because $n_2 \ge n_1^{p'}$, the number $n \asymp \frac{n_2}{n_1}$ of iterates is large enough relative to the parameter defining the Diophantine condition satisfied by $\om_1$.

The analogue of the bookkeeping in~\eqref{summary-scale-n1} for the next scale is derived in the same way, where the bounds in~\eqref{summary-n1-eq1}, ~\eqref{summary-n1-eq1bis} and ~\eqref{summary-n1-eq4} do not change, while the one in~\eqref{summary-n1-eq3} becomes sharper as a result of the scale increase: $\abs{  \LE{m}_1 (B)  -  \LE{m'}_1 (B) } < n_2^{-a/3}$ for all $m, m' \asymp n_2$.

We provide more details regarding the derivation of the analogue of~\eqref{summary-n1-eq2} at scale $n_2$, as this is the place where the estimate worsens slightly from scale to scale, although the additional errors form a summable series that does not exceed $10 \ep_0$. We have
\begin{align*}
\abs{  \LE{m}_1 (B)  -  \LE{m}_1 (A) }   & \le  \abs{  \LE{n_1}_1 (B)  -  \LE{n_1}_1 (A) }  \\
& \kern-1.5em + \babs{  \LE{m}_1 (B)  -  \LE{n_1}_1 (B) + 2 \, \left[ \LE{n_1}_1 (B) - \LE{2 n_1}_1 (B) \right] }  \\
& \kern-1.5em + \babs{  \LE{m}_1 (A)  -  \LE{n_1}_1 (A) + 2 \, \left[ \LE{n_1}_1 (A) - \LE{2 n_1}_1 (A) \right] }  \\
& \kern-1.5em + 2 \abs{  \LE{n_1}_1 (B)  -  \LE{2 n_1}_1 (B) } + 2 \, \abs{  \LE{n_1}_1 (A)  -  \LE{2 n_1}_1 (A) } \\
& \kern-1.5em \le (9 \, \ep_0 +  n_1^{- a/3}) + \bigo (n_2^{- a/2}) + \bigo (n_2^{- a/2})  + 2 \,   n_1^{- a/3} + 2 \,   n_1^{- a/3} \\
& \kern-1.5em =  9 \, \ep_0  + 5 \,  n_1^{- a/3} + \bigo  (n_2^{- a/2})  < 9 \, \ep_0  + 5 \,  n_1^{- a/3} + n_2^{- a/3}  \\
& \kern-1.5em <  9 \, \ep_0  + 5 \,   \sum_{k=1}^{\infty} \,  \, n_k^{- a/3} 
<  9 \, \ep_0  + 10   \,  n_1^{- a/3}  < 10 \, \ep_0 \, .
\end{align*}

The argument continues the same way with scales 
$$ \ldots \gg n_k \gg \ldots \gg n_3 \gg n_2 \gg n_1\, $$
chosen such that $n_k^{p'} \le n_{k+1} \le e^{n_{k}^{b_1/2}}$, hence their ranges overlap. Therefore, the uniform LDT~\eqref{unif-ldt-eq} holds for all $n \ge \underline{n_1}:= n_0^{p'}$. Furthermore, the uniform $L^2$ bound~\eqref{uniform-l2-bound-thm} holds by~\eqref{summary-n1-eq1bis} and its analogues at higher scales. Strictly speaking, we have derived it only for $n \ge \underline{n_1}$. However, at the cost of decreasing slightly the size $\delta_0$ of the neighborhood around $A$, we may assume that~\eqref{proximity-first-scale} holds in fact for all $m\le n_0^{p'}$, hence the argument following this estimate ensures the uniform $L^2$ bound also at scales $m \le \underline{n_1}$. 
\end{proof}


\section{The proofs of the main statements}\label{proofs}
Large deviations type estimates for iterates of linear cocycles can be used to establish the continuity of the corresponding Lyapunov exponents. This was the subject of our monograph~\cite{DK-book}. The crucial component of the continuity argument was the {\em uniformity} of the estimates in the cocycle.

Given any cocycle $A \in \qpcmat{d}$ with $L_1 (A) > L_2 (A)$, in Theorem~\ref{u-fiber-ldt-thm} we proved uniform fiber LDT estimates in a neighborhood of $A$; moreover, we established that $A$  is uniformly $L^2$-bounded, i.e. the uniform estimate~\eqref{uniform-l2-bound-thm} on the iterates of any nearby cocycle.

The abstract continuity theorem (ACT) in Chapter 3 of this book  contains additional assumptions. However, in our present setting of analytic, quasi-periodic cocycles\textemdash in fact for any space of cocycles over a uniquely ergodic base dynamics on a compact metric space\textemdash they are automatically satisfied (the reader may consult Section 6.4 in~\cite{DK-book} for the complete argument in the non-identical singular case).  Therefore, the ACT is applicable and it establishes Theorem~\ref{cont-le}, where the weak-H\"older mo\-du\-lus of continuity is a consequence of the sub-exponential rate of decay in the LDT~\eqref{unif-ldt-eq}.

The continuity of the Oseledets filtration and decomposition are si\-mi\-larly consequences of an abstract statement we derived in Chapter 4 of \cite{DK-book}. The assumptions in this statement are the same as for the continuity of the Lyapunov exponents, 
hence Theorem~\ref{cont-oseledets} is also established.

\medskip

Let us move on to the applications of the continuity theorem of the Lyapunov exponents for identically singular cocycles to the positivity and simplicity of the Lyapunov exponents.

 We begin with the proof of Theorem~\ref{pos-sim-thm}. 

For every $\delta \in \R$ define the cocycle
$$S_\delta :=  \left[ \begin{array}{cc} M & \delta \,  N  \\
\delta \,  P &  \delta \,  Q \\  \end{array} \right] .$$

Note that $S_0 = \left[ \begin{array}{cc} M &  0  \\
0 &   0 \\  \end{array} \right]$ is {\em identically singular}.

We can write 
$$A_{\la} = \left[ \begin{array}{cc} \la \, M &   N  \\
P &   Q \\  \end{array} \right]  = \la \, \left[ \begin{array}{cc} M & \frac{1}{\la} \,  N  \\
\\
\frac{1}{\la} \,  P &  \frac{1}{\la} \,  Q \\  \end{array} \right] = \la \, S_{\frac{1}{\la}} ,$$
so for every $1 \le k \le m$, $L_k (A_\la) = \log \sabs{\la} + L_k (S_{\frac{1}{\la}} )$.

As $\sabs{\la} \to \infty$, $S_{\frac{1}{\la}} \to S_0 = \left[ \begin{array}{cc} M &  0  \\
0 &   0 \\  \end{array} \right]$, and $L_l (S_0) = L_l (M) > - \infty$ (because $\det [M (x)] \not \equiv 0$). 

Then if  $\sabs{\la}$ is large enough, by the continuity Theorem~\ref{cont-le} we have
$$L_l (A_\la) = \log \sabs{\la} + L_l (S_{\frac{1}{\la}}) > \log \sabs{\la}  + L_l (M) - 1,$$
which establishes~\eqref{pos-sim-eq1} with $C_0 :=  - L_l (M) + 1 < \infty$. 

\smallskip

The simplicity statement in item (b) of the theorem follows the same way.
The  $l$ largest Lyapunov exponents of $S_0$ are exactly the Lyapunov exponents of $M$, which are assumed to be simple. Then the {\em quantitative} part of Theorem~\ref{cont-le} applies to each of these exponents, so there is a weak-H\"older modulus of continuity function $w (h)$ so that for $\sabs{\delta} \ll 1$ and for every $1 \le k \le l$ we have
$$\abs{L_k (S_\delta) - L_k (M)}  = \abs{L_k (S_\delta) - L_k (S_0)} \le w (\sabs{\delta}) \to 0 \quad \text{as } \delta \to 0.$$

This then translates into
$$\abs{L_k ( A_{\la} ) - \log \sabs{\la} - L_k (M) } \le w \big(\frac{1}{\sabs{\la}}\big) ,$$
thus proving~\eqref{pos-sim-eq2}.

\medskip

The proof of Theorem~\ref{pos-energy-thm} proceeds the same way as that of Proposition~\ref{sorets-spencer-thm}.
We first factor out $\la$ to write
$$A_{\la, E} =  \la \, \left[ \begin{array}{ccc} U (x) \, (F (x) + \frac{1}{\la} \, R (x) - \frac{E}{\la} \, I) & &  \frac{1}{\la} \, N  \\
\\
 \frac{1}{\la} \,  P & &   \frac{1}{\la} \,  Q \\  \end{array} \right] .$$

Make the change of coordinates $\delta= \frac{1}{\la}$, $s = \frac{E}{\la}$, and consider the cocycle
$$S_{\delta, \, s} (x) := \left[ \begin{array}{ccc} U (x) \, (F (x) + \delta \, R (x) - s \, I) & &  \delta \, N  \\
\\
 \delta \,  P & &    \delta \,  Q \\  \end{array} \right] ,$$
so we have $A_{\la, E} =  \la \, S_{\frac{1}{\la}, \, \frac{E}{\la}}$.

Then
$$L_l (A_{\la, E}) = \log \sabs{\la} + L_l (S_{\frac{1}{\la}, \, \frac{E}{\la}}) .$$

For every $s \in \R$, 
$S_{0, \, s} (x) =  \left[ \begin{array}{cc} U (x) \, (F (x) - s \, I) & 0  \\
\\
0 &    0 \\  \end{array} \right] $, so
$$L_l (S_{0, \, s}) = L_l \big( U (x) \, (F (x) - s \, I) \big) > - \infty$$
because by our assumptions, $\det [ U (x) \, (F (x) - s \, I) ]  \not \equiv 0$.

By the continuity Theorem~\ref{cont-le}, the map $(\delta, s) \mapsto L_l (S_{\delta, \, s})$ is con\-ti\-nuous. Therefore, locally near every point $(0, s)$, this map has a finite lower bound. By compactness, given any compact interval $I$, there are $\delta_0 > 0$ and $C_0 < \infty$ such that on $[ - \delta_0, \delta_0 ] \times I$, the map  $(\delta, s) \mapsto L_l (S_{\delta, \, s})$ is bounded from below by $- C_0$. 

Translating this back, it follows that for $\sabs{\la}$ large enough, if $\frac{E}{\la}$ is bounded, say $\sabs{E} \le 2 \sabs{\la} \norm{F}_r$, then
$$L_l (A_{\la, E}) > \log \sabs{\la} - C_0 .$$

Next we factor out $E$, to get
$$A_{\la, E} =  E \, \left[ \begin{array}{ccc} U (x) \, ( \frac{\la}{E} \, F (x) +  \frac{1}{E} \, R -  I) & &  \frac{1}{E} \, N  \\
\\
 \frac{1}{E} \,  P & &   \frac{1}{E} \,  Q \\  \end{array} \right] $$
and make the change of variables $\delta= \frac{1}{E}$, $s = \frac{\la}{E}$.

By the same continuity and compactness argument, we conclude that for $ \frac{\la}{E}$ bounded, say $\sabs{\la} \le 2 \sabs{E} \norm{F}_r$, and  for $\sabs{E}$ large enough (which would happen if we chose $\sabs{\la}$ large enough), we have
 $$L_l (A_{\la, E}) > \log \sabs{E} - C_0' > \log \sabs{\la} - C_0 ,$$
which completes the proof of the theorem. \qed


\section{Consequences for  block Jacobi operators}\label{mathphys}
\newcommand{\newA}{\widetilde{A}_{\la, E}}

In this section we present some immediate applications of our main statements to block Jacobi operators (also called strip or band lattice operators). These types of operators  generalize the one-dimensional lattice Schr\"odinger operator described in Section~\ref{introduction}.  Our applications are concerned with the positivity, continuity and (local) simplicity of the Lyapunov exponents of the corresponding eigenvalue equation, and also with the continuity of the integrated density of states (IDS).

\smallskip

 Let us begin by describing a block Jacobi operator. 

Fix a translation vector $\om \in \T^d$. Let $W, R, F \in \qpcmatl{d}$. Assume that for all phases $x \in \T^d$, $R(x)$ and $F(x)$ are symmetric matrices, that $W$ is not identically singular and denote by $W^T (x)$ the transpose of the matrix $W (x)$. Moreover, for all $n \in \N$, denote 
\begin{equation}\label{wnrndn}
W_n (x) := W (x+n \om), R_n (x) := R (x+n \om), F_n (x) := F (x+n \om)
\end{equation}

A {\em quasi-periodic block Jacobi operator} is an operator  $H = H_{\la} (x)$ acting on $l^2 (\Z, \R^l)$ by
\begin{equation}\label{J-op}
[ H_{\la} (x)  \, \vpsi ]_n := - (W_{n+1} (x) \, \vpsi_{n+1} + W^{T}_{n} (x) \, \vpsi_{n-1} + R_{n} (x) \, \vpsi_{n}) + \la \, F_n (x) \, \vpsi_n ,
\end{equation}
where $\vpsi = \{ \vpsi_n \}_{n\in\Z} \in l^2 (\Z, \R^l)$ is any state, $x \in \T^d$ is a phase that introduces some randomness into the system and $\la \neq 0$ is a coupling constant.

This model contains all quasi-periodic, finite range hopping Schr\"{o}\-din\-ger operators on integer or band integer lattices (which in some sense may be regarded as approximations of higher dimensional lattices). The hopping term is given by the ``weighted'' Laplacian: 
\begin{equation}\label{w-Laplace}
[ \Delta_W (x) \, \vpsi]_n :=   - W_{n+1} (x) \, \vpsi_{n+1} + W^{T}_{n} (x) \, \vpsi_{n-1} + R_{n} (x) \, \vpsi_{n}
\end{equation}
where the hopping amplitude is encoded by the quasi-periodic matrix valued functions $W_n (x)$ and $R_n (x)$.

The potential is given by the quasi-periodic matrix valued function $\la \, F_n (x)$. 

The more relevant situation from a physical point of view  is when the potential function $F (x)$ is a {\em diagonal} matrix, while the entries of the weight $W (x)$ are trigonometric polynomials.

The associated Schr\"{o}dinger equation 
$$
H_{\la} (x) \, \vpsi = E \, \vpsi
$$
for a (generalized) state  $\vpsi = \{ \vpsi_n \}_{n\in\Z} \subset \R^l$ and energy $E \in \R$, gives rise to a cocycle $A_{\la, E} (x)$ of dimension $m= 2 l$. Let $L_k (E) = L_k (A_{\la, E}) $ denote its $k$-th Lyapunov exponent.
An easy calculation shows that
\begin{equation*} \label{J-cocycle}
 A_{\la, E} (x) = 
 \left[\begin{array}{cc}
W^ {-1} (x + \om) \, (\la F (x) + R (x) - E \, I)      &   - W^{-1} (x + \om) \, W^{T} (x)\\ 
\\
I  &  0\end{array}\right] 
\end{equation*}
and that this cocycle can be conjugated to a symplectic cocycle (see Section 8 in~\cite{DK1} for more details).

Note that since $W (x)$ is analytic and $\det [ W (x) ] \not \equiv 0$, $W^{-1} (x)$ exists almost everywhere, so the cocycle $A_{\la, E} (x)$ is defined almost everywhere.

Then the Lyapunov exponents are well defined and they satisfy the relations
$$L_1 (E) \ge \ldots \ge L_l (E) \ge 0 \ge L_{l+1} (E) \ge \ldots \ge L_{2 l} (E) > - \infty$$ 
and $L_{2 l + 1 - k} (E) = - L_k (E)$ for all $1 \le k \le l$.

We are ready to formulate the statement.

\begin{theorem}\label{pos-band-op} 
Assume that $\om \in \DC_t$. Then all Lyapunov exponents of the operator~\eqref{J-op} depend continuously on the data, i.e. on $E, \la \in \R$ as well as on $W, R, F \in \qpcmatl{d}$.

Furthermore, given such matrix-valued functions $W, F, R$, there is a constant
$\la_0 = \la_0 (t, W, F, \norm{R}_r) < \infty$ so that if we fix $\la$ with $\sabs{\la} \ge \la_0$, the following hold.

\smallskip

(i) (Positivity) If $W$ is not identically singular and if $F$ has no constant eigenvalues, then there is $C_0 = C_0 (W, F) < \infty$ such that
$$L_l (E) = L_l (A_{\la, E}) > \log \sabs{\la} - C_0 \quad \text{ for all } E \in \R .$$ 

(ii) (Continuity) If  $W$ is not identically singular and if $F$ has no constant eigenvalues, then the block
$L_1 + \ldots +L_l $ is locally weak-H\"older continuous.

\smallskip

(iii) (Simplicity) Assume that the weight $W$ is one-dimensional, i.e. 
$ W (x) = h (x) \, I$ where $ h  \in \analyticf{d}$ with $h (x) \not \equiv 0$. Assume moreover that $F - s \, I$,  seen as an $l$-dimensional cocycle,
has simple Lyapunov exponents for all $s \in \mathcal{E}$, where $\mathcal{E}$ is a compact interval.  

Then the Lyapunov exponents $L_k (E)$ of the operator $H_\la (x)$ are simple for all energies $E \in \la \, \mathcal{E}$.
More precisely, there is a constant $\ka_0 = \ka_0 (t, h, F, \norm{R}_r, \mathcal{E}) > 0$, such that for all $1 \le k < l$ and $E \in \la \mathcal{E}$, 
$$L_k (E) - L_{k+1} (E) > \ka_0 .$$
\end{theorem}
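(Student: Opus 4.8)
The plan is to prove Theorem~\ref{pos-band-op} by reducing each of its three parts to the abstract results already established, namely the continuity Theorem~\ref{cont-le}, the positivity/simplicity Theorem~\ref{pos-sim-thm}, and the uniform positivity Theorem~\ref{pos-energy-thm}. The key observation is that the block Jacobi cocycle $A_{\la, E}(x)$ has exactly the block structure required by these theorems once we identify the pieces correctly. First I would record the explicit form of the cocycle computed above, and match it to the template in Theorem~\ref{pos-energy-thm}: with $m = 2l$ and taking the first block to be $M_{\la, E}(x) = U(x)(\la F(x) + R(x) - E\,I)$ where $U(x) := W^{-1}(x+\om)$, and with $N(x) := -W^{-1}(x+\om)W^T(x)$, $P(x) := I$, $Q(x) := 0$, we need that $U$ is non-identically singular (which holds since $\det[W(x+\om)] \not\equiv 0$, so $U = W^{-1}(\cdot+\om)$ is analytic with $\det U \not\equiv 0$ off a measure-zero set, hence $\det[U(x)]\not\equiv0$) and that $F$ has no constant eigenvalues, i.e. $\det[F(x) - s\,I] \not\equiv 0$ for all $s \in \R$ — this is precisely the hypothesis ``$F$ has no constant eigenvalues''. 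A technical point to address: $N$ is defined only almost everywhere since it involves $W^{-1}$. One must either observe that $W^{-1}(x+\om)W^T(x)$ extends analytically to $\strip_r^d$ after clearing the determinant (using the adjugate formula $W^{-1} = \frac{1}{\det W}\,\adj W$ and the analyticity considerations analogous to those in the reduced-cocycle lemma), or pass to a conjugated (symplectic) representative as indicated in the text; in either case $\norm{N}_r$, $\norm{P}_r$, $\norm{Q}_r$ are finite, and also $\norm{R}_r < \infty$. I expect this ``the cocycle genuinely lies in $\qpcmat{d}$ with finite norm'' bookkeeping to be the first mildly delicate step, though it is essentially the same analyticity argument already carried out for $R_A$.

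With the structural identification in hand, part (i) (positivity) is an immediate application of Theorem~\ref{pos-energy-thm}: it yields $\la_0 = \la_0(t, U, F, \norm{R}_r, \norm{N}_r, \norm{P}_r, \norm{Q}_r) < \infty$ and $C_0 = C_0(U, F) < \infty$ with $L_l(A_{\la, E}) > \log\sabs{\la} - C_0$ for all $E \in \R$, provided $\sabs{\la} \ge \la_0$. Since $U$, $N$, $P$, $Q$ are all determined by $W$ (and $R$), we may rewrite the dependence as $\la_0 = \la_0(t, W, F, \norm{R}_r)$ and $C_0 = C_0(W, F)$, matching the statement. Part (ii) (continuity) has two halves. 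The first sentence of the theorem — joint continuity of all $L_k(E)$ in the full data $(E, \la, W, R, F)$ — follows because the map $(E, \la, W, R, F) \mapsto A_{\la, E} \in \qpcmat{d}$ is continuous (it is a rational, hence locally Lipschitz, expression in the entries, using once more that $\det W$ is bounded away from $0$ in $L^\infty$ on a neighborhood, or rather using the adjugate form which is polynomial; here some care is needed precisely because $W$ may be identically singular is \emph{not} allowed, we assume $\det W \not\equiv 0$, but under perturbation $\det W$ stays $\not\equiv 0$ by the {\L}ojasiewicz inequality's stability), composed with the continuity of $A \mapsto L_k(A)$ from Theorem~\ref{cont-le}. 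The weak-H\"older continuity of the block $L_1 + \ldots + L_l$ follows from the ``moreover'' part of Theorem~\ref{cont-le}: we need $L_l(A_{\la,E}) > L_{l+1}(A_{\la,E})$, which holds because part (i) gives $L_l(A_{\la,E}) > \log\sabs{\la} - C_0 > 0 \ge L_{l+1}(A_{\la,E})$ once $\sabs{\la}$ is large enough (using the symmetry $L_{2l+1-k} = -L_k$ and $L_l \ge 0 \ge L_{l+1}$, plus the strict lower bound on $L_l$). So near such $A_{\la,E}$ the partial sum $L_1 + \ldots + L_l$ is weak-H\"older in the cocycle, and composing with the local Lipschitz map from data to cocycle gives local weak-H\"older continuity in the data.

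Part (iii) (simplicity) is where I would invoke Theorem~\ref{pos-sim-thm}. When $W(x) = h(x)\,I$ with $h \in \analyticf{d}$, $h \not\equiv 0$, the block $M_{\la, E}(x) = h(x+\om)^{-1}(\la F(x) + R(x) - E\,I)$; factoring out $\la$ and writing $E = \la s$ for $s \in \mathcal{E}$, we get $A_{\la, E} = \la\, S_{1/\la, s}$ where $S_{\delta, s}(x) = \left[\begin{smallmatrix} h(x+\om)^{-1}(F(x) - s\,I) + \delta\, h(x+\om)^{-1}R(x) & -\delta\, h(x+\om)^{-1} h(x)\,I \\ \delta\, I & 0 \end{smallmatrix}\right]$ — actually I would instead present it in the framework of Theorem~\ref{pos-sim-thm} directly by taking the $l$-dimensional block to be $M = M_{\la,E}$ itself and applying item (b). The key input is that $M$ at the ``$\delta = 0$'' limit, namely $h(\cdot+\om)^{-1}(F - s\,I)$, has simple Lyapunov exponents for $s \in \mathcal{E}$: this reduces to the hypothesis that $F - s\,I$ (as an $l$-dimensional cocycle) has simple Lyapunov exponents, since multiplying a cocycle by a scalar function $h(\cdot+\om)^{-1}$ shifts \emph{all} $l$ Lyapunov exponents by the same constant $\int \log\sabs{h}$ and hence preserves all gaps $L_k - L_{k+1}$. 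Then Theorem~\ref{pos-sim-thm}(b), applied with $\la$ large, gives $L_k(A_{\la,E}) - L_{k+1}(A_{\la,E}) = L_k(\tilde M) - L_{k+1}(\tilde M) + \mathcal{O}(e^{-c(\log\sabs{\la})^b})$ for $1 \le k < l$, where $\tilde M = h(\cdot+\om)^{-1}(F - s\,I)$ and $s = E/\la \in \mathcal{E}$. By compactness of $\mathcal{E}$ and continuity of $s \mapsto L_k(\tilde M_s) - L_{k+1}(\tilde M_s)$ (Theorem~\ref{cont-le} again), the gaps $L_k(\tilde M_s) - L_{k+1}(\tilde M_s)$ are bounded below by some $2\ka_0 > 0$ uniformly in $s \in \mathcal{E}$; then choosing $\la_0$ large enough that the error term is $< \ka_0$ yields $L_k(E) - L_{k+1}(E) > \ka_0$ for all $E \in \la\mathcal{E}$, $1 \le k < l$, as claimed.

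The main obstacle I anticipate is not conceptual but is the uniformity-over-$\mathcal{E}$ packaging in part (iii): Theorem~\ref{pos-sim-thm} as stated fixes a single block $M$ and produces a threshold $\la_0 = \la_0(t, M, \ldots)$ and constants $c = c(M)$, $b = b(d)$; here $M$ effectively varies with $s = E/\la$, so one must run the compactness argument to get \emph{uniform} constants — exactly as was done in the proof of Proposition~\ref{sorets-spencer-thm} via a compactness argument turning the pointwise-in-$s$ weak-H\"older continuity into uniform weak-H\"older continuity on a compact box $[-\delta_0,\delta_0] \times \mathcal{E}$. I would mimic that argument: observe $(\delta, s) \mapsto S_{\delta, s}$ is Lipschitz (now $2l$-dimensional), $S_{0,s}$ is identically singular with its top $l$ exponents equal to those of $\tilde M_s$ which has $L_l(\tilde M_s) > L_{l+1}(\tilde M_s) = -\infty$, apply the quantitative part of Theorem~\ref{cont-le} locally and patch by compactness of $[-\delta_0,\delta_0]\times\mathcal{E}$ to get a single weak-H\"older modulus, then translate back via $\delta = 1/\la$, $s = E/\la$. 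This also simultaneously delivers parts (i) and (ii) restricted to the regime $E \in \la\mathcal{E}$, but for the full strength of (i)–(ii) (all $E \in \R$) one genuinely needs Theorem~\ref{pos-energy-thm}, which already has the ``uniform in $E$'' feature built in via factoring out both $\la$ and $E$. So the cleanest writeup uses Theorem~\ref{pos-energy-thm} for (i) and (ii) and the Proposition~\ref{sorets-spencer-thm}-style compactness argument combined with Theorem~\ref{pos-sim-thm} for (iii).
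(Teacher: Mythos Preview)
Your overall strategy---reduce (i) to Theorem~\ref{pos-energy-thm}, (ii) to the quantitative half of Theorem~\ref{cont-le} via the gap $L_l > 0 \ge L_{l+1}$, and (iii) to a compactness argument \`a la Proposition~\ref{sorets-spencer-thm}---is exactly the paper's. But there is a technical gap in the execution, one you flag (``clear the determinant using the adjugate'') but do not actually close. The cocycle $A_{\la,E}(x)$ involves $W^{-1}(x+\om)$ and hence is defined only off the analytic set $\{\det W = 0\}$; it is \emph{not} an element of $\qpcmat{d}$, so none of Theorems~\ref{cont-le}, \ref{pos-sim-thm}, \ref{pos-energy-thm} applies to it as written. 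In particular your choice $U(x) = W^{-1}(x+\om)$ is not in $\qpcmatl{d}$, your map $(E,\la,W,R,F)\mapsto A_{\la,E}$ does not land in $\qpcmat{d}$, and in (iii) your block $\tilde M = h(\cdot+\om)^{-1}(F - sI)$ is not analytic on $\strip_r^d$ when $h$ has zeros. Clearing denominators is the right idea, but it changes the cocycle and hence shifts the Lyapunov exponents, and this shift must be tracked.

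The paper's fix is to set $g(x) := \det[W(x)]$ and work with the \emph{analytic} cocycle $\newA(x) := g(x+\om)\,A_{\la,E}(x) \in \qpcmat{d}$, whose blocks are $U(x) = \adj(W(x+\om)) \in \qpcmatl{d}$ (so $\det U = g(\cdot+\om)^{l-1} \not\equiv 0$), $N(x) = -\adj(W(x+\om))\,W^T(x)$, $P(x) = g(x+\om)\,I$, $Q \equiv 0$. One then has $L_k(\newA) = \int_{\T^d}\log\sabs{g(x)}\,dx + L_k(A_{\la,E})$ for every $k$, so every continuity, positivity or gap statement for $\newA$ transfers to $A_{\la,E}$; the additive constant cancels in differences, and its continuous dependence on $W$ is the continuity of the top LE of the one-dimensional cocycle $g$. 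For (iii) the analogous move is to multiply by the scalar $h(x+\om)$ first: then the rescaled $S_{\delta,s}$ has upper-left block $F(x) + \delta R(x) - sI$, so that $S_{0,s}$ has block $F - sI$ on the nose, with no $h^{-1}$ to argue away. Once this clearing is done, the rest of your argument goes through verbatim and coincides with the paper's proof.
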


\begin{proof} The general theorems formulated in Section~\ref{introduction} do not directly apply to the cocycle $A_{\la, E}$ above, because it is not defined (and analytic) everywhere. This can be easily remedied by multiplying it with a one-dimensional cocycle.

Indeed, by Cramer's formula, whenever $\det [ W (x) ] \neq 0$, we have
$\displaystyle W^{-1} (x) = \frac{1}{\det [ W (x) ]}  \, \adj  ( W (x) )$.
 
Let $g (x) := \det [ W (x) ]$, so $g \in \analyticf{d}$ and $g (x) \neq 0$ a.e. Define 
$$\newA (x) := g (x+\om) \, A_{\la, E} (x) = \left[ \begin{array}{cc} M_{\la, E} (x) &   N (x)  \\
\\
P (x) &   Q (x) \\  \end{array} \right] ,$$
where
\begin{align*}
M_{\la, E} (x) & := U (x) \, (\la \, F (x) + R (x) - E \, I) \\
U (x) & := \adj ( W (x+\om) ) \\
N (x) & := - \adj ( W (x+\om) ) \, W^T (x) \\
P (x) & := g (x+\om) \, I \\
Q (x) & \equiv 0 .
\end{align*}

Then clearly $\newA \in \qpcmat{d}$ where $m=2 l$,  and for all $1 \le k \le m$ we have
\begin{equation}\label{jacobi-proof-eq1}
L_k (\newA) = \int_{\T^d} \log \sabs{g (x)} \, d x + L_k (A_{\la, E}) .
\end{equation}

By  the continuity Theorem~\ref{cont-le}, all Lyapunov exponents are con\-ti\-nuous functions on $\qpcmat{d}$. The cocycle $\newA$ depends continuously on $E, \la$ as well as on $W, R, F$ and so the same is true for its Lyapunov exponents. Moreover, the maps $W \mapsto \det [W]$ and $g \mapsto \int_{\T^d} \log \sabs{g}$ are continuous (the latter may be regarded as the continuity of the Lyapunov exponent of the one-dimensional cocycle $g$).   
By way of formula~\eqref{jacobi-proof-eq1}, these observations establish the first continuity statement of the theorem.

\medskip

Now we fix $W, R, F \in \qpcmatl{d}$.

Since $g$ is analytic and $g \not \equiv 0$, $\int_{\T^d} \log \sabs{g (x)} \, d x = C (W) > - \infty$.

Clearly 
$\det [ U (x) ] = \det [ \adj (W (x+\om) ) ] = \det [ W (x+\om) ]^{l-1} \not \equiv 0$.
Since, moreover, $F$ has no constant eigenvalues, Theorem~\ref{pos-energy-thm}  is applicable to $\newA$, hence there is $\la_0$ depending on the fixed data, so that
$$L_l (\newA ) > \log \sabs{\la} - C_0 \quad \text{for all } E \in \R  \ \text{ and for all } \sabs{\la} \ge \sabs{\la_0} .$$ 
This, together with \eqref{jacobi-proof-eq1}, establishes the conclusion of item (i). 

\smallskip

Now fix $\la$ with $\sabs{\la} \ge \sabs{\la_0}$.
By item (i), for all energy parameters $E \in \R$,
$L_l (E) = L_l (A_{\la, E}) > \log \sabs{\la} - C_0 > 0$ (we may increase $\la_0$ if necessary). 
Therefore, 
$$L_l (E) > L_{l+1} (E) \, ( = - L_l (E) ) $$
holds for all $E \in \R$. 

The quantitative statement in Theorem~\ref{cont-le} is then applicable, and we conclude that the map $E \mapsto (L_1 + \ldots + L_l) (E)$ is locally weak-H\"older continuous, which establishes item (ii). 

\smallskip

We now proceed with the proof of item (iii). Since here we assume that $W (x) = h (x) \, I$, so $W^{-1} (x) = \frac{1}{h (x)} \, I$ and $W^T (x) = h (x) \, I$, we can write
\begin{align*}
h (x+\om) \, A_{\la, E} (x) & = \left[ \begin{array}{cc} \la F (x) + R (x) - E \, I &   - h (x) \, I \\
\\
h (x+\om)  \, I &   0 \\  \end{array} \right] \\
\\
& = \la \,  \left[ \begin{array}{cc} F (x) + \frac{1}{\la} \, R (x) - \frac{E}{\la} \, I &   - \frac{1}{\la} \, h (x)   \, I \\
\\
\frac{1}{\la} \, h (x+\om)  \, I &   0 \\  \end{array} \right] .
\end{align*}

Make the change of variables $\delta = \frac{1}{\la}$, $s = \frac{E}{\la}$ and define the cocycle
$$S_{\delta, \, s} (x) :=  \left[ \begin{array}{cc} F (x) + \delta \, R (x) - s  \, I &   -  \delta \, h (x)  \, I  \\
\\
\delta \, h (x+\om)  \, I &   0 \\  \end{array} \right] .$$
 
 Then clearly $S_{\delta, \, s} \in \qpcmat{d}$ and for all $1 \le k \le m$,
 $$L_k (E) = L_k (A_{\la, E}) = - \int_{\T^d} \log \sabs{h (x)} \, d x + \log \sabs{\la} + L_k (S_{\frac{1}{\la}, \, \frac{E}{\la}}) .$$
 
 Since  $h$ is analytic and $h \not \equiv 0$, we have that $\int_{\T^d} \log \sabs{h (x)} \, d x \in \R$. 
 
 Then for all $E \in \R$, $\la \neq 0$ and $1 \le k < l$, 
 \begin{equation}\label{jacobi-proof-eq2}
 L_k (E) - L_{k+1} (E) = L_k (S_{\frac{1}{\la}, \, \frac{E}{\la}})  - L_{k+1} (S_{\frac{1}{\la}, \, \frac{E}{\la}}) .
 \end{equation}

As $\delta \to 0$, we have $S_{\delta, \, s}  \to S_{0, \, s} = \left[ \begin{array}{cc} F  - s  \, I &   0  \\
0 &   0 \\  \end{array} \right] $, so by the con\-ti\-nuity Theorem~\ref{cont-le}, if $1 \le k \le l$ then $$L_k (S_{\delta, \, s})  \to L_k (S_{0, \, s}) = L_k (F-s \, I) .$$ 
 
 We assumed that for all $s \in \mathcal{E}$, the cocycle $F -s \, I$ has simple Lyapunov exponents. Then the $l$ largest Lyapunov exponents of $S_{0, \, s}$ are simple, and so for every fixed $s \in \mathcal{E}$ and every $\delta$ near $0$, $S_{\delta, \, s}$ has simple Lyapunov exponents as well.
 
By the compactness of $\mathcal{E}$ there are $\delta_0 > 0$ and $\ka_0 > 0$ such that if $(\delta, s) \in [- \delta_0, \delta_0] \times \mathcal{E}$ and $ 1\le k < l$, then
$$L_k (S_{\delta, \, s}) - L_{k+1} (S_{\delta, \, s}) > \ka_0 .$$
Together with \eqref{jacobi-proof-eq2} this proves the claim in item (iii).
\end{proof}

\begin{remark}
If for some $s_0 \in \R$, the cocycle $F - s_0 \, I$ has simple Lyapunov exponents, then by the continuity Theorem~\ref{cont-le} this also holds in a whole neighborhood $\mathcal{E}$ of $s_0$. However, this property (and even more so its global counterpart) is difficult to verify, even in the case when $F(x)$ is a diagonal matrix-valued function (unless, of course, the ranges of the diagonal entries are apart from each other). The simplicity statement in the theorem above is therefore not entirely satisfactory.
\end{remark}

Next we apply item (ii)\textemdash the quantitative continuity result on Lyapunov exponents\textemdash  in the above theorem to derive a modulus of continuity for the integrated density of states (IDS) of the block Jacobi operators $H_\la (x)$. 

Let us recall a couple of facts. 

Denote by $P_n$ the coordinate restriction operator to $\{1, 2, \ldots, n\} \subset \Z$, and let $H_\la^{(n)} (x) := P_n \, H_\la (x) \, P_n^\ast$. It is well know that the following limit exists and it is a.e. independent of $x \in \T^d$: 
$$N (E) = N_\la (E) := \lim_{n\to\infty} \, \frac{1}{n} \, \# \big( (- \infty, E] \cap \text{ Spectrum of } H_\la^{(n)} (x) \big)\,.$$
The function $E \mapsto N_\la (E)$ is called the {\em integrated density of states} of the family of ergodic operators $\{H_\la (x) \colon x \in \T^d\}$.

The IDS is known to be $\log$-H\"older continuous in a very general setting (see \cite{Craig-Simon-IDS}). 
We show that in our context it is in fact weak-H\"older continuous. 

\smallskip

It is a classic result in the theory of lattice {\em Schr\"odinger operators} like~\eqref{ s op}  that the Lyapunov exponents and the IDS are related essentially via the Hilbert transform:
$$L (E) = \int_\R \log \abs{E-E'} d N (E')\,.$$ 
This relation is called the Thouless formula.

Recently, J. Chapman and G. Stolz (see \cite{C-Stolz}) obtained a Thouless-type formula (relating the LE and the IDS) which is applicable in the full generality of the operator~\eqref{J-op}. 

Assuming that $C (W) := \int_{\T^d} \log \, \sabs{\det [W (x)] } d x > - \infty$, their result states that\footnote{Strictly speaking, the result in \cite{C-Stolz} is formulated under a stronger assumption, but a private conversation with the authors revealed that it holds, in fact, under this more general condition.} 
\begin{equation}\label{general Thouless} 
(L_1 + \ldots + L_l) (E) = l \,  \int_\R \log \sabs{E-E'} d N (E') - C (W) .
\end{equation}

Our assumption on the weight $W (x)$ clearly implies that $C (W)$ is finite. Then by a standard argument involving some properties of the Hilbert transform (see the end of Section 10 in~\cite{GS-Holder}), the relation~\eqref{general Thouless} above allows us to transfer the local weak-H\"older modulus of continuity of the map $L_1 + \ldots + L_l$ to the IDS $N (E)$. We then conclude with the following.

\begin{corollary}
\label{Jacobi IDS} Consider the block Jacobi operator~\eqref{J-op} and assume that  the weight $W$ is not identically singular and that the potential function $F$ has no constant eigenvalues. Then its integrated density of states is weak-H\"older continuous, provided $\sabs{\la}$ is large enough depending on the data.
\end{corollary}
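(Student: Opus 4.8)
The plan is to feed item (ii) of Theorem~\ref{pos-band-op}---the quantitative continuity of the partial sum $L_1+\cdots+L_l$ in the energy variable---into the Thouless-type formula~\eqref{general Thouless} of Chapman and Stolz, and then to pass from the regularity of the logarithmic potential of $dN_\la$ to that of $N_\la$ itself by the classical harmonic analysis argument.

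First I would check the two hypotheses that these inputs require. Since $W$ is not identically singular, the analytic function $g:=\det[W(\cdot)]$ is $\not\equiv0$ on $\T^d$, so by the {\L}ojasiewicz inequality (Proposition~\ref{Loj-prop}) $\log\sabs{g}\in L^1(\T^d)$; hence $C(W)=\int_{\T^d}\log\sabs{\det[W(x)]}\,dx>-\infty$ and the formula~\eqref{general Thouless} applies. Fix $\la$ with $\sabs{\la}\ge\la_0$, $\la_0$ being the threshold from Theorem~\ref{pos-band-op}. Because $R(x)$ and $F(x)$ are symmetric, $H_\la(x)$ is bounded and self-adjoint with $\norm{H_\la(x)}\le 2\norm{W}_r+\norm{R}_r+\sabs{\la}\norm{F}_r=:\rho_0$, so the support of $dN_\la$ is contained in the fixed compact interval $J:=[-\rho_0,\rho_0]$ and $N_\la$ is constant on each unbounded component of $\R\setminus J$. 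By item (ii) of Theorem~\ref{pos-band-op}, $\Lambda(E):=(L_1+\cdots+L_l)(E)$ is locally weak-H\"older continuous on $\R$, hence, by compactness of a fixed neighborhood of $J$, it has there a single weak-H\"older modulus $w(h)=C\,e^{-c(\log(1/h))^b}$.

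By~\eqref{general Thouless},
\begin{equation*}
\Phi(E):=\int_\R\log\sabs{E-E'}\,dN_\la(E')=\frac1l\bigl(\Lambda(E)+C(W)\bigr),
\end{equation*}
so the logarithmic potential $\Phi$ of $dN_\la$ also carries a weak-H\"older modulus on a neighborhood of $J$. It then remains to show that a finite positive measure on $\R$ with weak-H\"older logarithmic potential has a weak-H\"older distribution function; this is precisely the standard argument at the end of Section~10 of~\cite{GS-Holder}, which I would carry out via the harmonic extension $U(E+it):=\int_\R\log\sabs{E+it-E'}\,dN_\la(E')$. For $t>0$, $U(\,\cdot+it\,)$ is the Poisson integral of $\Phi$, so using $\int P_t=1$ one has $U(E+it)-\Phi(E)=\int_\R P_t(s)\bigl(\Phi(E-s)-\Phi(E)\bigr)\,ds$; splitting this integral at $s=\sqrt t$, using the modulus $w$ of $\Phi$ near $J$ and the crude bound $\Phi(E-s)-\Phi(E)=\mathcal{O}(\log\sabs{s})$ for $\sabs{s}$ large, gives $\sabs{U(E+it)-\Phi(E)}\less w(\sqrt t)+\sqrt t$. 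On the other hand $\tfrac12\log(1+t^2/(E-E')^2)\ge\tfrac12\log2$ whenever $\sabs{E-E'}\le t$, so
\begin{equation*}
N_\la\bigl([E-t,E+t]\bigr)\le\frac{2}{\log2}\bigl(U(E+it)-\Phi(E)\bigr)\less w(\sqrt t)+\sqrt t ,
\end{equation*}
and since $N_\la$ is non-decreasing, constant outside $J$, and $w(\sqrt t)+\sqrt t$ is again of weak-H\"older type in $t$, this is a weak-H\"older modulus of continuity for $N_\la$ on all of $\R$, proving the corollary.

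The step I expect to be the main obstacle is this last transfer. A naive estimate---bounding $\int_{\sabs{E-E'}\le t}\log(1/\sabs{E-E'})\,dN_\la$ by a constant minus $\Phi(E)$---only reproduces the $\log$-H\"older continuity already known from~\cite{Craig-Simon-IDS}; one genuinely needs the Poisson representation (equivalently, the second-difference identity $\Phi(E+t)+\Phi(E-t)-2\Phi(E)=\int_\R\log\sabs{1-t^2/(E-E')^2}\,dN_\la(E')$), so that the large uncontrolled contributions cancel and only the modulus $w$ survives. Taming the logarithmic singularity at $\sabs{E-E'}=t$ in the second-difference formulation, or the Poisson tail in the formulation above, is the one point that requires care, but it is routine. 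All other ingredients---finiteness of $C(W)$, compactness of the support of $dN_\la$, and the uniform weak-H\"older modulus of $\Lambda$ near $J$---are immediate from the results already in place.
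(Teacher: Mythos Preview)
Your proposal is correct and follows essentially the same route as the paper: combine item~(ii) of Theorem~\ref{pos-band-op} with the Thouless-type formula~\eqref{general Thouless}, then invoke the standard transfer of regularity from the logarithmic potential to the distribution function. The paper simply cites this last step as ``a standard argument involving some properties of the Hilbert transform (see the end of Section~10 in~\cite{GS-Holder})'', whereas you sketch an equivalent version via the harmonic extension $U(E+it)$ and the Poisson kernel; these are the same argument in different dress, since $N_\la$ is, up to constants, the boundary value of the harmonic conjugate of $U$, i.e.\ the Hilbert transform of~$\Phi$.
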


\begin{remark}
A similar argument also gives a local version of this result. We fix the coupling constant $\la$. In other words, let
$$[ H (x)  \, \vpsi ]_n := - (W_{n+1} (x) \, \vpsi_{n+1} + W^{T}_{n} (x) \, \vpsi_{n-1} + R_{n} (x) \, \vpsi_{n}) + F_n (x) \, \vpsi_n $$
be a block Jacobi operator defined by analytic matrix-valued functions and a Diophantine translation vector. If for some energy $E_0$ we have that $L_l (E_0) > 0$, then by the continuity Theorem~\ref{cont-le} this holds on a small compact neighborhood $\mathcal{E}$ of $E_0$. Moreover, by the same continuity theorem the map $\mathcal{E}  \ni E \to (L_1 + \ldots L_l) (E)$ is weak-H\"older continuous, and so is the IDS $N (E)$. 
\end{remark}


\subsection*{Acknowledgments}

The first author was supported  by Funda\c{c}\~{a}o para a Ci\^{e}ncia e a Tecnologia, under the project: UID/MAT/04561/2013.

The second author was supported by the Norwegian Research Council project no. 213638, ``Discrete Models in Mathematical Analysis''.

Both authors would like to thank the anonymous referees for their valuable suggestions.

\bigskip

\def\cprime{$'$} \def\cprime{$'$}
\providecommand{\bysame}{\leavevmode\hbox to3em{\hrulefill}\thinspace}
\providecommand{\href}[2]{#2}

\end{document}